\newtheorem{theorem}{Theorem}[section]
\newtheorem{lemma}[theorem]{Lemma}
\newtheorem{proposition}[theorem]{Proposition}
\newtheorem{corollary}[theorem]{Corollary}
\theoremstyle{definition}
\newtheorem{definition}[theorem]{Definition}
\newtheorem{example}[theorem]{Example}
\newtheorem{obs}[theorem]{Observation}
\numberwithin{equation}{section}
\DeclareMathOperator{\id}{id}
\DeclareMathOperator{\inte}{int}
\DeclareMathOperator{\Rmt}{Rem}
\DeclareMathOperator{\Rem}{\textbf{Rem}}
\DeclareMathOperator{\Int}{int}
\DeclareMathOperator{\cl}{cl}
\DeclareMathOperator{\Morp}{Morp}
\DeclareMathOperator{\Obj}{Obj}
\DeclareMathOperator{\BCFLoc}{\textbf{BiCFLoc}}
\DeclareMathOperator{\BBooLoc}{\textbf{BiBooLoc}}
\DeclareMathOperator{\BiLoc}{\textbf{BiLoc}}
\DeclareMathOperator{\RBiLoc}{\textbf{RemBiLoc}}
\DeclareMathOperator{\Loc}{\textbf{Loc}}
\DeclareMathOperator{\Frm}{\textbf{Frm}}
\DeclareMathOperator{\BiFrm}{\textbf{BiFrm}}
\theoremstyle{definition}\newtheorem{thm}{Theorem}[section]
\theoremstyle{definition}
\theoremstyle{definition}
\theoremstyle{definition}
\theoremstyle{definition}
\theoremstyle{remark}\newtheorem{rem}[thm]{Remark}
\theoremstyle{definition}
\theoremstyle{definition}
\begin{document}
	\title[$(i,j)$-remote sublocales]
	{Remoteness in the category of bilocales}

	\author{Mbekezeli Nxumalo}
	\address{Department of Mathematical Sciences, University of South Africa, P.O. Box 392, 0003 Pretoria, SOUTH AFRICA.}
	\address{Department of Mathematics, Rhodes University, P.O. Box 94, Grahamstown 6140, South Africa.}
	\email{sibahlezwide@gmail.com}
	%\email{bloses@unisa.ac.za}
	\subjclass[2010]{06D22}
	\keywords {sublocale, $(i,j)$-nowhere dense sublocale, $(i,j)$-remote sublocale, bilocalic map, remote sublocale, natural isomorphism, comonad}
	\dedicatory{}
	
	%%% ----------------------------------------------------------------------
	
	\let\thefootnote\relax\footnote{}
	
	\begin{abstract} In locale theory, a sublocale is said to be remote in case it misses every nowhere dense sublocale. In this paper, we introduce and study a new class of sublocales in the category of bilocales, namely $(i,j)$-remote sublocales. These are bilocalic counterparts of remote sublocales and are the sublocales missing every $(i,j)$-nowhere dense sublocale, with  $(i,j)$-nowhere dense sublocales being bilocalic counterparts of $(\tau_{i},\tau_{j})$-nowhere dense subsets in bitopological spaces. A comprehensive study of $(i,j)$-nowhere dense sublocales is given and we show that in the class of balanced bilocales, a sublocale is $(i,j)$-nowhere dense if and only if its bilocale closure is nowhere dense. We also consider weakly $(i,j)$-remote sublocales which are those sublocales missing every clopen $(i,j)$-nowhere dense sublocale. Furthermore, we extend $(i,j)$-remoteness to the categories of bitopological spaces as well as normed lattices. In the class of $\sup$-T$_{D}$ bitopological spaces, a subset $A$ of a bitopological space $(X,\tau_{1},\tau_{2})$ is $(\tau_{i},\tau_{j})$-remote if and only if the induced sublocale $\widetilde{A}$ of $\tau_{1}\vee\tau_{2}$ is $(i,j)$-remote.  Given a bilocale $(L,L_{1},L_{2})$, the collection $\Rmt_{(i,j)}\!L$ of all elements of $L$ inducing closed weakly $(i,j)$-remote sublocales is a closed sublocale of $L$ and is always $(i,j)$-remote but seldomly remote. For the congruence bilocale of a locale $L$, $\Rmt_{(i,j)}\!\mathfrak{C}L=\mathfrak{C}L$ and for the ideal bilocale of a bilocale whose total part is Noetherian, $\Rmt_{(i,j)}L=L$ if and only if $\Rmt_{(i,j)}\mathfrak{J}L=\mathfrak{J}L$.  We show that $\Rmt_{(i,j)}$ is a functor  from the category $\BiLoc_{R(i,j)}$ of bilocales whose morphisms are $\Rmt_{(i,j)}$-maps to the category of locales and there is a natural transformation from $\Rmt_{(i,j)}$ to the functor $G$ which is the composite of the faithful functor $F:\BiLoc\rightarrow \Loc$ and the inclusion functor $\BiLoc_{R(i,j)}\hookrightarrow\BiLoc$. Moreover, there is a comonad associated with the endofunctor from the category $\RBiLoc_{SR(i,j)}$ whose objects are symmetric bilocales in which $\Rmt_{(i,j)}\!L$ is remote. We prove that the category of symmetric Boolean bilocales is a coreflective subcategory of the category $\RBiLoc_{SR(i,j)}$.
	\end{abstract}
	%%%------------------------------------------------------------------
	
	\maketitle
	
	%--------------------------------------------------------------
	
	\section*{Introduction}\label{sect0}
	
	In \cite{N}, we introduced and studied remote sublocales in the category of locales. These are sublocales that miss every nowhere dense sublocale. Remote sublocales subsequently appeared in the article \cite{N2} where their relationship with maximal nowhere dense sublocales was investigated. The aim of this paper is to study remoteness in the category of bilocales. We shall define and characterize $(i,j)$-remote sublocales and weakly $(i,j)$-remote sublocales of bilocales. These sublocales are defined using the notion of an $(i,j)$-nowhere dense sublocale which is a bilocale counterpart of a \textit{$(\tau_{i},\tau_{j})$-nowhere dense} subset which was  defined by Fukutake \cite{F}, in 1992, as a subset $A$ of a bitopological space (bispace in short) $(X,\tau_{1},\tau_{2})$ such that $\Int_{\tau_{j}}(\cl_{\tau_{i}}(A))=\emptyset$ $(i\neq j)$, where $\Int_{\tau_{i}}$ and $\cl_{\tau_{i}}$ (for $i=1,2$) denote the $\tau_{i}$-interior and $\tau_{i}$-closure, respectively. $(i,j)$-remote sublocales are also extended to the categories of bitopological spaces and normed lattices and we prove that in the class of $\sup$-T$_{D}$ bitopological spaces, a subset $A$ of a bitopological space $(X,\tau_{1},\tau_{2})$ is $(\tau_{i},\tau_{j})$-remote if and only if the induced sublocale $\widetilde{A}$ of $\tau_{1}\vee\tau_{2}$ is $(i,j)$-remote. We commented in \cite{N} that we could not determine whether the collection of all elements inducing closed remote sublocales is always a sublocale. In this paper, we prove that $\Rmt_{(i,j)}\!L$ is always a closed sublocale. The assignment $\Rmt_{(i,j)}$ is a functor from the category $\BiLoc_{(i,j)}$ of bilocales whose morphisms are $\Rmt_{(i,j)}$-maps to the category of locales and there is a natural transformation from $\Rmt_{(i,j)}$ to the functor $G$ which is the composite of the faithful functor $F:\BiLoc\rightarrow \Loc$ and the inclusion functor $\BiLoc_{R(i,j)}\hookrightarrow\BiLoc$. 
	
	This paper is organized as follows. Section one consists of the necessary background. In Section two we introduce $(i,j)$-nowhere dense sublocales and study their properties that will be used in the other sections. We prove that, although $(i,j)$-nowhere dense sublocales cannot always be characterized using the Booleanization of the total part of a bilocale, in the category of balanced bilocales, a sublocale is $(i,j)$-nowhere dense precisely when its $i$-closure misses the Booleanization of the total part of the locale. In Section three, we define and investigate $(i,j)$-remote sublocales as well as weakly $(i,j)$-remote sublocales. We show among other things that in the category of symmetric bilocales, $(i,j)$-remote sublocales are those of the form $\mathfrak{b}(x^{*})$. We extend $(i,j)$-remoteness to the categories of bitopological spaces and normed lattices. It turns out that, in a category of Sup-T$_{D}$ bispaces, a subset $A$ of a bitopological space $(X,\tau_{1},\tau_{2})$ is $(\tau_{i},\tau_{j})$-remote if and only if $\widetilde{A}$ is $(i,j)$-remote as a sublocale of the bilocale $(\tau_{1}\vee\tau_{2},\tau_{1},\tau_{2})$. The fourth section discusses preservation and reflection of $(i,j)$-remote sublocales by bilocalic maps. In Section five, we show that in a bilocale $(L,L_{1},L_{2})$, the collection $\Rmt_{(i,j)}\!L$ of elements of $L$ inducing closed weakly $(i,j)$-remote sublocales is a closed sublocale, which is not always remote. We also prove that $\Rmt_{(i,j)}\!L$ coincides with $L$ exactly when $L$ is $(i,j)$-remote as a sublocale of itself. For the congruence bilocale of a locale $L$, $\Rmt_{(i,j)}\!\mathfrak{C}L=\mathfrak{C}L$ and for the ideal bilocale of a bilocale whose total part is Noetherian, $\Rmt_{(i,j)}L=L$ if and only if $\Rmt_{(i,j)}\mathfrak{J}L=\mathfrak{J}L$. We prove that $\Rmt_{(i,j)}$ is a functor  from the category $\BiLoc_{R(i,j)}$ of bilocales whose morphisms are $\Rmt_{(i,j)}$-maps to the category of locales and also show among other things  that the category of symmetric Boolean bilocales is a coreflective subcategory of the category $\RBiLoc_{SR(i,j)}$ whose objects are symmetric bilocales in which $\Rmt_{(i,j)}\!L$ is remote.
	
	\section{Preliminaries}\label{sect1}
	See \cite{PP1} as reference for notions of locales and  \cite{BBH,PP2} for the theory of bilocales.
	\subsection{Locales }\label{subsect11}
	
	A \textit{locale} $L$ is a complete lattice satisfying: 
	
	$$a\wedge\bigvee B=\bigvee\{a\wedge b:b\in B\}$$ for all $a\in L$, $B\subseteq L$. 
	$1_{L}$ and $0_{L}$, with subscripts dropped if there is no possibility of confusion, respectively denote the top element and the bottom element of a locale $L$. We denote by $a^{*}$ the \textit{pseudocomplement} of an element $a\in L$. An element $a\in L$ is said to be \textit{dense } and \textit{complemented} in case $a^{*}=0$ and $a\vee a^{\ast}=1$, respectively. %A locale $L$ in which all of its elements are complemented is called is a \textit{Boolean} locale.
	
	We denote by $\mathfrak{O}X$ the locale of open subsets of a topological space $X$. 
	
	A \textit{localic map} is an infima-preserving function $f:L\rightarrow M$ between locales such that the corresponding left adjoint $f^{*}$, called its associated \textit{frame homomorphism}, preserves binary meets.  Throughout the paper, if $f$ is a localic map, we will write $h$ for its associated frame homomorphism.
	
	$\Frm$ and $\Loc$ represent the categories of locales whose morphisms are frame homomorphisms and localic maps, respectively.
	
	%A \textit{frame homomorphism} is a function $h:M\rightarrow L$ between locales $M$ and $L$ such that (1) $h(a\wedge b)=h(a)\wedge h(b)$ for all $a,b\in M$, (2) $h(1)=1$, (3) $h\left(\bigvee a_{i}\right)=\bigvee(h(a_{i}))$ for every $a_{i}\in M$, and (4) $h(0)=0$. It is called \textit{weakly open} if it preserves dense elements. 
	%Associated with a frame homomorphism $h:M\rightarrow L$ is its infima preserving right adjoint $h_{*}:L\rightarrow M$ given by $$h_{*}(x)=\bigvee\{a\in M:h(a)\leq x\}.$$ It is usually called a \textit{localic map}. Left adjoints of localic maps, say $g$, are represented by $g^{*}$. 
	%By a \textit{closed} frame homomorphism we refer to a frame homomorphism $h:M\rightarrow L$ such that $h_{*}(x\vee h(y))=h_{*}(x)\vee y$ for all $x\in L$ and $y\in M$. 
	
	\subsection{Sublocales}\label{subsect12}
	A \textit{sublocale} of a locale $L$ is a subset $S$ which is closed under all meets and $x\rightarrow s\in S$ for every $x\in L$ and $s\in S$, where $\rightarrow$ is a \textit{Heyting operation} on $L$ satisfying that $a\leq b\rightarrow c$ if and only if $a\wedge b\leq c$ for all $a,b,c\in L$. We write $\mathcal{S}(L)$ for the collection of all sublocales of a locale $L$. We denote by $\mathsf{O}$ the smallest sublocale of a locale $L$ and we shall say that a sublocale $S$ misses $T\in\mathcal{S}(L)$ in case $T\cap S=\mathsf{O}$. The largest sublocale of a locale $L$ missing a sublocale $S$ of $L$ is denoted by $L\smallsetminus S$. The sublocales
	$${\mathfrak{c}}(a)=\{x\in L:a\leq x\}\quad\text{and}\quad \mathfrak{o}(a)=\{a\rightarrow x:x\in L\},$$ of a locale $L$ are respectively the \textit{closed} and \textit{open} sublocales induced by an element $a$ of $L$. They are complements of each other. The smallest closed sublocale of $L$ containing $S\in \mathcal{S}(L)$ is called the \textit{closure} of $S$ and denoted by $\overline{S}$. $S\in \mathcal{S}(L)$ is dense and nowhere dense if $\overline{S}=L$ and $S\cap\mathfrak{B}L=\mathsf{O}$, respectively, where 
	$\mathfrak{B}(L)=\{x\rightarrow 0:x\in L\}$ is the least dense sublocale of $L$. %A note with result about dense sublocales is that pseudocomplementation on a dense sublocale is precisely that in the locale. This is so because, if $A$ is a dense sublocale of $L$ and $x\in A$, then writing $x^{*A}$ for the pseudocomplement of $x$ in $A$, we have the equalities
	%$$x^{*A}=x\rightarrow_{A} 0_{A} =x\rightarrow 0_{L}=x^{*}.$$
	%We shall use the prefix $S$- for localic properties defined on a sublocale $S$ of $L$. 
	
	%A \textit{nucleus} is a function $\nu:L\rightarrow L$ such that $a\leq \nu(a)$, $a\leq b$ $\Rightarrow$ $\nu(a)\leq\nu(b)$, $\nu\nu(a)=\nu(a)$, and $\nu(a\wedge b)=\nu(a)\wedge\nu(b)$ for every $a,b\in L$.
	
	For each sublocale $S\subseteq L$ there is an onto frame homomorphism $\nu_{S}:L\rightarrow S$ defined by $\nu_{S}(a)={\bigwedge}\{s\in S: a\leq s\}.$ Open sublocales and closed sublocales of a sublocale $S$ of $L$ are given by $$\mathfrak{o}_{S}(\nu_{S}(a))=S\cap \mathfrak{o}(a)\quad\text{and}\quad {\mathfrak{c}_{S}}(\nu_{S}(a))=S\cap \mathfrak{c}(a),$$ respectively, for $a\in L$. For any sublocale $S$ of a locale $L$ and $x\in L$, $S\subseteq\mathfrak{o}(x)$ if and only if $\nu_{S}(x)=1$.
	
	Each localic map $f:L\rightarrow M$ induces the functions $f[-]:\mathcal{S}(L)\rightarrow \mathcal{S}(M)$ given by the set-theoretic image of each sublocale of $L$ under $f$, and   $f_{-1}[-]:\mathcal{S}(M)\rightarrow\mathcal{S}(L)$ given by $$f_{-1}[T]={\bigvee}\{A\in \mathcal{S}(L):A\subseteq f^{-1}(T)\}.$$ For a localic map $f:L\rightarrow M$ and $x\in M$,  $$f_{-1}[\mathfrak{c}_{M}(x)]=\mathfrak{c}_{L}(h(x))\quad\text{and}\quad f_{-1}[\mathfrak{o}_{M}(x)]=\mathfrak{o}_{L}(h(x)).$$
	
	We denote by $\widetilde{A}$ a sublocale of $\mathfrak{O}X$ \textit{induced} by a subset $A$ of a topological space $X$. %For a topological space $X$ and $A,B\subseteq X$, we have that $\widetilde{A}\vee\widetilde{B}=\widetilde{A\cup B}$ and $\widetilde{A\cap B}\subseteq\widetilde{A}\cap \widetilde{B}$.
	%###############
	%###############
	\subsection{Bilocales}
	
	A \textit{bilocale} is a triple $(L,L_{1}, L_{2})$ where $L_{1},L_{2}$ are subframes of a locale $L$ and for all $a\in L$, \[ a=\bigvee\{a_{1}\wedge a_{2}: a_{1}\in L_{1}, a_{2}\in L_{2}\text{ and } a_{1}\wedge a_{2}\leq a\}.\] 
	
	We call $L$ the \textit{total part} of $(L,L_{1}, L_{2})$, and $L_{1}$ and $L_{2}$ the first and second parts, respectively. We use the notations $L_{i},L_{j}$ to denote the first or second parts of $(L,L_{1},L_{2})$, always assuming that $i,j=1,2$, $i\neq j$. %We shall write $i=1,2$ to mean that the corresponding statement holds for both the first and second parts. 

	The \textit{bilocale pseudocomplement} of $c\in L_{i}$ is given by $$c^{\bullet}=\bigvee\{x\in L_{j}:x\wedge c=0\}.$$

	%A \textit{subbilocale} of a bilocale $(L,L_{1},L_{2})$ is a triple $(S,S_{1},S_{2})$ where $S$ is a sublocale of $L$ and $$S_{i}=\nu_{S}[L_{i}]\quad \text{for}\quad i=1,2.$$

	A \textit{biframe homomorphism} (or \textit{biframe map}) $h:(M,M_{1},M_{2})\rightarrow(L,L_{1},L_{2})$ is a frame homomorphism $h:M\rightarrow L$ for which $h(M_{i})\subseteq L_{i}\quad (i=1,2).$
	
	We write $\BiFrm$ for the category of bilocales whose morphisms are biframe maps.

	\section{$(i,j)$-nowhere dense sublocales}
	We devote this section to introducing $(i,j)$-nowhere dense sublocales from $(i,j)$-nowhere dense subspaces and studying some of their properties. %$(i,j)$-nowhere dense sublocales will be used in the theory of $(i,j)$-remote sublocales that will follow after this section.
	
	Recall from \cite{F} that for a bitopological space (bispace in short) $(X,\tau_{1},\tau_{2})$, where $\Int_{\tau_{i}}$ and $\cl_{\tau_{i}}$ (for $i=1,2$) denote the $\tau_{i}$-interior and $\tau_{i}$-closure, respectively, a  subset $A$ of $X$ is \textit{$(\tau_{i},\tau_{j})$-nowhere dense} in $X$ if $\Int_{\tau_{j}}(\cl_{\tau_{i}}(A))=\emptyset$ $(i\neq j)$. We will extend this notion into locales and explore some of its bilocalic properties.
	
	For a bilocalic notion of $(\tau_{i},\tau_{j})$-nowhere density, we introduce bilocalic counterparts of the notions of closure and interior. Let $(L,L_{1},L_{2})$ be a bilocale. In \cite{PP2}, the authors introduced the following notation for a sublocale $S\subseteq L$: $$\cl_{i}(S)=\bigcap\{\mathfrak{c}(a):a\in L_{i}, S\subseteq\mathfrak{c}(a)\}=\mathfrak{c}\left(\bigvee\{a\in L_{i}:S\subseteq\mathfrak{c}(a)\}\right)\quad(i=1,2).$$
	
	We define $\Int_{i}(S)$ as follows:
	$$\Int_{i}(S)=\bigvee\{\mathfrak{o}(a):a\in L_{i}, \mathfrak{o}(a)\subseteq S\}\quad(i=1,2).$$
	
	We shall refer to these concepts as \textit{bilocale closure} and \textit{bilocale interior}, respectively. Throughout this paper, we assume that $i\neq j\in\{1,2\}$, unless otherwise stated.

	%The authors of \cite{PP2} use $i$ and $j$ to indicate respective parts of localic properties. In line with \cite{PP2}'s style, we shall write $(i,j)$-nowhere dense instead of $(L_{i},L_{j})$-nowhere dense. This will be the case for all bilocalic notions that will be introduced in this thesis. Throughout this thesis, we assume that $i\neq j\in\{1,2\}$, unless otherwise stated.
	
	We define an $(i,j)$-nowhere dense sublocale as follows.
	
	\begin{definition}\label{ijnowheredense}
		Let $(L,L_{1},L_{2})$ be a bilocale. A sublocale $S$ of $L$ is \textit{$(i,j)$-nowhere dense} if $\Int_{j}(\cl_{i}(S))=\mathsf{O}$ $(i\neq j\in\{1,2\})$.
	\end{definition}
	
	Our discussion of $(i,j)$-nowhere density involves bilocale interiors and bilocale closures. Before we consider the properties of bilocale closures and bilocale interiors which will be useful below, we collect some properties of the bilocale pseudocomplement in the following proposition. We refer the reader to \cite{N1} for the proofs of the following two results.
	
	\begin{proposition}\label{pseudo}
		Let $(L,L_{1},L_{2})$ be a bilocale and $i\neq j\in\{1,2\}$. Then
		\begin{enumerate}
			\item $0^{\bullet}=1$.
			\item For every $a\in L_{i}$, $a\wedge a^{\bullet}=0$.
			\item $a\wedge b=0$ iff $a\leq b^{\bullet}$ for all $a\in L_{j},b\in L_{i}$.
			\item $a\leq b$ implies $b^{\bullet}\leq a^{\bullet}$ for all $a,b\in L_{i}$.
			\item For each $a\in L_{i}$, $a\leq a^{\bullet\bullet}$.
			\item For each $a\in L_{i}$, $a^{\bullet}=a^{\bullet\bullet\bullet}$.
			\item $(a\vee b)^{\bullet}=a^{\bullet}\wedge b^{\bullet}$ for every $a,b\in L_{i}$.
		\end{enumerate}
	\end{proposition}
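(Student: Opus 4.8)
The plan is to treat the bilocale pseudocomplement as one half of a Galois connection (a polarity) between the subframes $L_i$ and $L_j$ induced by the orthogonality relation ``$x\wedge c=0$''. Concretely, $c^{\bullet}$ is the largest element of $L_j$ disjoint from $c$, and once this is made precise in the adjunction form (3), items (4)--(6) follow by the formal manipulations valid for any antitone Galois connection; only (1), (2) and (7) require genuine computation inside the frame. One point I would flag at the outset, because it governs every later step, is that $\bullet$ \emph{exchanges the two parts}, i.e. $\bullet\colon L_i\to L_j$ and $\bullet\colon L_j\to L_i$, so that $\bullet\bullet$ returns to the original subframe. Under the standing convention $i\neq j\in\{1,2\}$ each item is to be read for both orderings of the indices, and it is this symmetry that licenses reapplying (2) and (5) after a single $\bullet$ has moved us into the other part.

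First I would dispose of (1) and (2) directly from the definition. For (1), since $0\wedge x=0$ for every $x$, the set defining $0^{\bullet}$ is all of $L_j$, whose join is $1$ because $L_j$ is a subframe containing the top; hence $0^{\bullet}=1$. For (2) I would write $a^{\bullet}=\bigvee\{x\in L_j: x\wedge a=0\}$ and use the (infinitary) frame distributive law of $L$ to pull $a$ inside the join: $a\wedge a^{\bullet}=\bigvee\{a\wedge x: x\in L_j,\ x\wedge a=0\}=\bigvee\{0\}=0$. This is the one place where distributivity is used essentially.

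Next I would establish (3), the pivotal step. Given $a\in L_j$ and $b\in L_i$: if $a\wedge b=0$ then $a$ belongs to the set defining $b^{\bullet}$, so $a\leq b^{\bullet}$; conversely, if $a\leq b^{\bullet}$ then $a\wedge b\leq b^{\bullet}\wedge b=0$ by (2), whence $a\wedge b=0$. The remaining items are then formal. (4) is antitonicity: for $a\leq b$ in $L_i$, any $x\in L_j$ with $x\wedge b=0$ also satisfies $x\wedge a=0$, so the defining set of $b^{\bullet}$ sits inside that of $a^{\bullet}$ and $b^{\bullet}\leq a^{\bullet}$. (5) is immediate from (2) and (3): since $a\wedge a^{\bullet}=0$ with $a\in L_i$ and $a^{\bullet}\in L_j$, the index-swapped form of (3) gives $a\leq (a^{\bullet})^{\bullet}=a^{\bullet\bullet}$. (6) is the standard ``$\bullet\bullet\bullet=\bullet$'' identity: applying (4) to $a\leq a^{\bullet\bullet}$ yields $a^{\bullet\bullet\bullet}\leq a^{\bullet}$, while (5) applied to $a^{\bullet}\in L_j$ yields $a^{\bullet}\leq a^{\bullet\bullet\bullet}$, and the two inequalities combine. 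For (7) I would argue by double inequality: $a,b\leq a\vee b$ together with (4) give $(a\vee b)^{\bullet}\leq a^{\bullet}\wedge b^{\bullet}$; conversely $a^{\bullet}\wedge b^{\bullet}\in L_j$ (as $L_j$ is closed under binary meets) and $(a^{\bullet}\wedge b^{\bullet})\wedge(a\vee b)=0$ by (2) and distributivity, so (3) gives $a^{\bullet}\wedge b^{\bullet}\leq (a\vee b)^{\bullet}$.

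I expect no serious obstacle here: the whole proposition is the orthogonality polarity packaged as an antitone Galois connection, and its content is almost entirely formal. The only points demanding care are the two genuine appeals to frame distributivity (in (2) and in the disjointness computation underlying (7)) and the bookkeeping forced by the index swap. Because $\bullet$ alternates between $L_i$ and $L_j$, one must verify at each step that the element fed to $\bullet$ really lives in the correct subframe and that the subframe is closed under the joins and meets being formed; keeping this straight is the only realistic source of error, which is presumably why the author is content to cite \cite{N1} for the full details.
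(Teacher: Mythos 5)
Your proof is correct and complete. The paper itself gives no argument for this proposition---it simply defers to the author's thesis \cite{N1}---but your derivation (items (1)--(3) by direct computation using frame distributivity and the fact that $L_{j}$ is a subframe, then (4)--(7) as formal consequences of the antitone Galois connection in (3), with the index-swap bookkeeping handled explicitly) is exactly the standard argument one would expect to find there, and every step checks out.
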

	
	We gather in one result some properties of bilocale closure and bilocale interior. %See \cite{N1} for the proof.
	
	\begin{proposition}\label{int}
		Let $(L,L_{1},L_{2})$ be a bilocale and $S,T\in\mathcal{S}(L)$. The following statements hold for $i\neq j\in\{1,2\}$.
		\begin{enumerate}
			\item \cite{PP2} $S\subseteq\overline{ S}\subseteq\cl_{i}(S)$.
			\item If $T\subseteq S$, then $\cl_{i}(T)\subseteq \cl_{i}(S)$.
			\item $\cl_{i}(\cl_{i}(S))=\cl_{i}(S)$.
			\item $\mathfrak{c}(a)=\cl_{i}(\mathfrak{c}(a))$ for every $a\in L_{i}$.
			\item $\Int_{i}(S)=\mathfrak{o}\left(\bigvee\{a\in L_{i}:\mathfrak{o}(a)\subseteq S\}\right)$.
			\item $\Int_{i}(S)\subseteq\Int(S)\subseteq S$.
			\item If $T\subseteq S$, then $\Int_{i}(T)\subseteq\Int_{i}(S)$.
			\item $\Int_{i}(\Int_{i}(S))=\Int_{i}(S)$.
			\item $\mathfrak{o}(a)=\Int_{i}(\mathfrak{o}(a))$ for every $a\in L_{i}$.
			\item For each $a\in L_{i}$, $\mathfrak{c}(a^{\bullet})=\cl_{j}(\mathfrak{o}(a))$.
			\item For each $a\in L_{i}$, $\mathfrak{o}(a^{\bullet})=\Int_{j}(\mathfrak{c}(a))$.
			\item For each $a\in L_{i}$, $\cl_{j}(\mathfrak{o}(a))=L\smallsetminus\Int_{j}(\mathfrak{c}(a))$.
			\item For each $a\in L_{i}$, $\Int_{j}(\mathfrak{c}(a))=L\smallsetminus\cl_{j}(\mathfrak{o}(a))$.
			\item For each $a\in L$, $L\smallsetminus\Int_{i}(\mathfrak{o}(a))=\cl_{i}(\mathfrak{c}(a))$.
			\item For each $a\in L$, $L\smallsetminus\cl_{i}(\mathfrak{c}(a))=\Int_{i}(\mathfrak{o}(a))$.
		\end{enumerate}
	\end{proposition}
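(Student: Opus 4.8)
The plan is to prove the fifteen parts in a dependency-respecting order, establishing the fundamental relationships between $\cl_i$, $\Int_i$, and the bilocale pseudocomplement $(-)^{\bullet}$ first, then reading off the remaining De Morgan-style identities as formal consequences. Parts (1)--(9) are the ``soft'' monotonicity, idempotence, and fixed-point statements; the genuinely new content lives in parts (10)--(15), where the pseudocomplement enters. I would open by recalling the two explicit formulas from the definitions, namely $\cl_i(S)=\mathfrak{c}(\bigvee\{a\in L_i:S\subseteq\mathfrak{c}(a)\})$ and the analogous $\Int_i(S)=\mathfrak{o}(\bigvee\{a\in L_i:\mathfrak{o}(a)\subseteq S\})$, the latter being exactly the assertion of part (5). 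Part (5) should therefore be proved very early, since it converts the join-of-open-sublocales definition of $\Int_i$ into a single open sublocale; it follows from the frame-homomorphism fact that $\mathfrak{o}$ sends joins to joins of sublocales together with $\mathfrak{o}(a)\subseteq\mathfrak{o}(b)\iff a\le b$. Part (9) is then the special case $S=\mathfrak{o}(a)$ (here $a$ itself is the largest element of $L_i$ with $\mathfrak{o}(\cdot)\subseteq\mathfrak{o}(a)$), and dually part (4) is $S=\mathfrak{c}(a)$.

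For the monotonicity and idempotence block I would dispatch parts (2), (7) purely from the order-reversing/order-preserving nature of the defining index sets, and parts (3), (8) from (4), (9) applied to the already-closed (resp. open) sublocales $\cl_i(S)$ and $\Int_i(S)$. Part (1) is cited from \cite{PP2} and part (6) follows because $\mathfrak{o}(a)\subseteq\Int(S)$ whenever $a\in L_i$ with $\mathfrak{o}(a)\subseteq S$, so the $L_i$-indexed join is dominated by the full interior $\Int(S)=\bigvee\{\mathfrak{o}(a):\mathfrak{o}(a)\subseteq S,\ a\in L\}$.

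The heart of the argument is parts (10) and (11), which I would prove in tandem. For (10), I would show $\cl_j(\mathfrak{o}(a))=\mathfrak{c}\!\left(\bigvee\{b\in L_j:\mathfrak{o}(a)\subseteq\mathfrak{c}(b)\}\right)$ and identify that join with $a^{\bullet}$: the condition $\mathfrak{o}(a)\subseteq\mathfrak{c}(b)$ is equivalent to $\mathfrak{o}(a)\cap\mathfrak{o}(b)=\mathsf{O}$, i.e. $\mathfrak{o}(a\wedge b)=\mathsf{O}$, i.e. $a\wedge b=0$; so the elements $b\in L_j$ being joined are precisely those with $b\wedge a=0$, whose supremum is $a^{\bullet}$ by definition of the bilocale pseudocomplement. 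This is where I expect the main obstacle: one must verify that $\mathfrak{o}(a)\subseteq\mathfrak{c}(b)\iff a\wedge b=0$ cleanly in the sublocale lattice (using that $\mathfrak{o}$ and $\mathfrak{c}$ are complementary and that $\mathfrak{o}(a)\cap\mathfrak{o}(b)=\mathfrak{o}(a\wedge b)$), and equally that taking the join over this index set commutes with the $\mathfrak{c}(\bigvee\cdots)$ formula despite $a^{\bullet}$ living in $L_j$ rather than the original index frame. Part (11) is the dual computation with the roles of $\mathfrak{o},\mathfrak{c}$ and the two pseudocomplement-defining sets interchanged, yielding $\Int_j(\mathfrak{c}(a))=\mathfrak{o}(a^{\bullet})$.

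Once (10) and (11) are in hand, the final six identities are formal. Parts (12) and (13) express $\cl_j(\mathfrak{o}(a))$ and $\Int_j(\mathfrak{c}(a))$ as complements of each other: since $\mathfrak{c}(a^{\bullet})$ and $\mathfrak{o}(a^{\bullet})$ are complementary sublocales by the general fact that $\mathfrak{c}(x)$ and $\mathfrak{o}(x)$ are complements, substituting (10) and (11) gives $\cl_j(\mathfrak{o}(a))=L\smallsetminus\Int_j(\mathfrak{c}(a))$ and its rearrangement. Parts (14) and (15) are proved the same way but without invoking the pseudocomplement: for $a\in L$ (now ranging over the whole locale, not just $L_i$), I would note $L\smallsetminus\Int_i(\mathfrak{o}(a))=\cl_i(\mathfrak{c}(a))$ by complementing the open-sublocale join in (5) against the closed-sublocale intersection defining $\cl_i$, using that complementation is an order-reversing bijection on $\mathcal{S}(L)$ that swaps $\mathfrak{o}(b)\leftrightarrow\mathfrak{c}(b)$ for $b\in L_i$ and converts the $\subseteq$ constraints accordingly. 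Part (15) is then the complement of (14). Throughout I would lean on the complementation duality $L\smallsetminus(\bigvee S_k)=\bigcap(L\smallsetminus S_k)$ to turn suprema of open pieces into infima of closed pieces, which is the single mechanism powering all of (12)--(15).
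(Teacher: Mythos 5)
Your proposal is correct, but there is nothing in the paper to compare it against: the paper does not prove Proposition \ref{int} at all, stating just before Proposition \ref{pseudo} that the proofs of both results are deferred to the author's thesis \cite{N1}. Your argument therefore supplies the omitted proof, and it does so along the natural lines: reduce everything to the two single-generator formulas $\cl_{i}(S)=\mathfrak{c}\left(\bigvee\{a\in L_{i}:S\subseteq\mathfrak{c}(a)\}\right)$ and $\Int_{i}(S)=\mathfrak{o}\left(\bigvee\{a\in L_{i}:\mathfrak{o}(a)\subseteq S\}\right)$, get (2)--(4) and (7)--(9) from monotonicity of the index sets and the fact that $L_{i}$ is a subframe (hence closed under the relevant joins, which is what makes (3) and (8) follow from (4) and (9)), and then identify the index set $\{b\in L_{j}:\mathfrak{o}(a)\subseteq\mathfrak{c}(b)\}=\{b\in L_{j}:a\wedge b=0\}$ with the defining set of $a^{\bullet}$ to obtain (10) and (11); the remaining identities are complementation bookkeeping. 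The worry you raise in (10) is indeed a non-issue, exactly because $a^{\bullet}$ is by definition the join of that index set and lies in $L_{j}$.

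One caution: your closing claim that ``complementation is an order-reversing bijection on $\mathcal{S}(L)$'' is false in general --- $\mathcal{S}(L)$ is a coframe, not a Boolean algebra, and $L\smallsetminus(-)$ is not an involution on arbitrary sublocales. It is harmless here because you only apply it to open and closed sublocales, which are genuinely complemented, and because the duality $L\smallsetminus\bigvee_{k}\mathfrak{o}(a_{k})=\bigcap_{k}\mathfrak{c}(a_{k})$ you invoke follows from $\bigvee_{k}\mathfrak{o}(a_{k})=\mathfrak{o}\left(\bigvee_{k}a_{k}\right)$; but you should restrict the statement accordingly. Also note that (14) and (15) admit an even shorter route than complementing one formula against the other: for $a\in L$ both index sets $\{b\in L_{i}:\mathfrak{o}(b)\subseteq\mathfrak{o}(a)\}$ and $\{b\in L_{i}:\mathfrak{c}(a)\subseteq\mathfrak{c}(b)\}$ equal $\{b\in L_{i}:b\leq a\}$, so $\Int_{i}(\mathfrak{o}(a))=\mathfrak{o}(c)$ and $\cl_{i}(\mathfrak{c}(a))=\mathfrak{c}(c)$ for the same element $c$, and the two statements are immediate.
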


	In what follows, we introduce $i$-dense sublocales. Recall from \cite{KS} that a subset $A$ of a bitopological space $(X,\tau_{1},\tau_{2})$ is \textit{$i$-dense} if $\cl_{\tau_{i}}(A)=X$. This recalled notion motivates the following definition of an $i$-dense sublocale.
	
	\begin{definition}\label{idensesub}
		A sublocale $A$ of a bilocale $(L,L_{1},L_{2})$ is \textit{$i$-dense} if $\cl_{i}(A)=L$.
	\end{definition}
	
	We work towards showing that the bilocalic definition of $i$-density is ``conservative" in the sense that a subset $A$ of a bitopological space $(X,\tau_{1},\tau_{2})$ is $i$-dense if and only if $\widetilde{A}$ is $i$-dense in $(\tau_{1}\vee\tau_{2},\tau_{1},\tau_{2})$.
	
	Recall from \cite{L} that given a topological property $P$, a bitopological space $(X,\tau_{1},\tau_{2})$ is $\sup$-$P$ if $(X,\tau_{1}\vee\tau_{2})$ has property $P$. We say that $(X,\tau_{1},\tau_{2})$ is $\sup$-T$_{D}$ if $(X,\tau_{1}\vee\tau_{2})$ is T$_{D}$.
	
	In bilocalic terms, we denote the sublocale induced by a subset $A$ of $X$ as follows:$$\widetilde{A}=\{\Int_{\tau_{1}\vee\tau_{2}}((X\smallsetminus A)\cup G):G\in\tau_{1}\vee\tau_{2}\}.$$This notation has all the properties of usual induced sublocales. We shall denote by $\tau$ the topology $\tau_{1}\vee\tau_{2}$.
	
	We have the following:

	\begin{quote}
		\emph{In a $\sup\text{-T}_{D}$ bitopological space $(X,\tau_{1},\tau_{2})$, $x\in A\subseteq X$ if and only if $\widetilde{x}\in\widetilde{A}$.}
	\end{quote}
	
	Lemma \ref{biclo} below provides a useful property of bilocale closure. The proof is similar to that of \cite[Proposition 2.10.]{N} and shall be omitted.% Its proof follows the steps of the proof of \cite[Lemma 3.5]{DS0}.
	
	\begin{lemma}\label{biclo}
		Let $A$ be a subset of a $\sup$-T$_{D}$-bispace $(X,\tau_{1},\tau_{2})$. Then $\widetilde{\cl_{\tau_{i}}(A)}=\cl_{i}(\widetilde{A})$ for $i=1,2$.
	\end{lemma}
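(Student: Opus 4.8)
The plan is to show that both sides coincide with one and the same closed sublocale $\mathfrak{c}(U)$ of $\tau=\tau_{1}\vee\tau_{2}$, where $U:=\Int_{\tau_{i}}(X\smallsetminus A)$ is the largest $\tau_{i}$-open set disjoint from $A$. First I would record the two facts that let the $\tau_{i}$-level data and the $\tau$-level data match up: since $\tau_{i}$ is a subframe of $\tau$, a join of $\tau_{i}$-open sets formed in $\mathfrak{O}X$ is just their union and again lies in $\tau_{i}$; and since $\tau_{i}\subseteq\tau$, every $\tau_{i}$-closed set is $\tau$-closed. In particular $\cl_{\tau_{i}}(A)=X\smallsetminus U$ with $U\in\tau_{i}\subseteq\tau$, so $\cl_{\tau_{i}}(A)$ is $\tau$-closed and the induced sublocale $\widetilde{\cl_{\tau_{i}}(A)}$ is defined.

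For the left-hand side I would substitute $\cl_{\tau_{i}}(A)=X\smallsetminus U$ into the defining formula for induced sublocales. Because $X\smallsetminus(X\smallsetminus U)=U$ is already $\tau$-open, $\Int_{\tau}(U\cup G)=U\cup G$ for every $G\in\tau$, and hence
$$\widetilde{\cl_{\tau_{i}}(A)}=\{U\cup G:G\in\tau\}=\{V\in\tau:U\subseteq V\}=\mathfrak{c}(U),$$
which is the expected identification of the induced sublocale of a closed set with the corresponding closed sublocale.

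For the right-hand side I would unwind the definition $\cl_{i}(\widetilde{A})=\mathfrak{c}\!\big(\bigvee\{a\in\tau_{i}:\widetilde{A}\subseteq\mathfrak{c}(a)\}\big)$. The smallest element of $\widetilde{A}$, obtained by taking $G=\emptyset$, is $\Int_{\tau}(X\smallsetminus A)=\bigwedge\widetilde{A}$; hence for any $a$ the condition $\widetilde{A}\subseteq\mathfrak{c}(a)$ holds precisely when $a\leq\Int_{\tau}(X\smallsetminus A)$. For $a\in\tau_{i}\subseteq\tau$ this says exactly $a\cap A=\emptyset$, i.e. $a\subseteq U$. Thus the indexing family is $\{a\in\tau_{i}:a\subseteq U\}$, whose join in $\mathfrak{O}X$ is the union $U$ itself, since $U\in\tau_{i}$ is its largest member. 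Therefore $\cl_{i}(\widetilde{A})=\mathfrak{c}(U)$ (consistent with Proposition \ref{int}(4)), and comparing with the previous step yields $\widetilde{\cl_{\tau_{i}}(A)}=\cl_{i}(\widetilde{A})$.

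The only real care-point is to keep the $\tau_{i}$-level operations ($\Int_{\tau_{i}}$, $\cl_{\tau_{i}}$, and the family of $\tau_{i}$-opens in the definition of $\cl_{i}$) cleanly separate from the $\tau$-level operations (the interior in the definition of $\widetilde{A}$ and the meets and joins of the sublocale lattice), which is exactly what the two subframe/closedness observations above handle; beyond this I expect no genuine obstacle. An alternative route, closer in spirit to \cite[Proposition~2.10]{N}, is to compare the two closed sublocales pointwise through the characterization ``$x\in B$ iff $\widetilde{x}\in\widetilde{B}$'' available in a $\sup$-T$_{D}$ bispace; that is where the $\sup$-T$_{D}$ hypothesis genuinely enters, the direct computation above relying on it only insofar as it guarantees that $\widetilde{(\cdot)}$ enjoys the usual properties of an induced sublocale.
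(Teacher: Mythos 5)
Your argument is correct, but it takes a genuinely different route from the paper's. The paper gives no computation at all: it omits the proof, pointing to Proposition 2.10 of \cite{N}, whose argument is the pointwise, T$_{D}$-style one you describe as the ``alternative route'' (comparing the two closed sublocales via the quoted principle that in a $\sup$-T$_{D}$ bispace $x\in B$ iff $\widetilde{x}\in\widetilde{B}$). You instead compute both sides outright: writing $U=\Int_{\tau_{i}}(X\smallsetminus A)\in\tau_{i}\subseteq\tau$, the induced sublocale of the $\tau$-closed set $\cl_{\tau_{i}}(A)=X\smallsetminus U$ is $\{U\cup G:G\in\tau\}=\mathfrak{c}(U)$; and since $\bigwedge\widetilde{A}=\Int_{\tau}(X\smallsetminus A)$ lies in $\widetilde{A}$ (take $G=\emptyset$) while $\mathfrak{c}(a)$ is an up-set, one has $\widetilde{A}\subseteq\mathfrak{c}(a)$ for $a\in\tau_{i}$ iff $a\subseteq X\smallsetminus A$ iff $a\subseteq U$, so the join in the definition of $\cl_{i}(\widetilde{A})$ is exactly $U$ and $\cl_{i}(\widetilde{A})=\mathfrak{c}(U)$ as well. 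Every step here is sound. What your route buys is generality and self-containedness: nothing in it uses the $\sup$-T$_{D}$ hypothesis, so the identity actually holds in an arbitrary bispace --- indeed your closing caveat (that $\sup$-T$_{D}$ is still needed for $\widetilde{(\cdot)}$ to have ``the usual properties'') can simply be deleted, since your computation invokes only the displayed formula defining $\widetilde{A}$, which makes sense for any bispace. What the paper's route buys is economy and uniformity with \cite{N}: the same pointwise template is reused for the statements where a T$_{D}$-type hypothesis really is indispensable, such as the disjointness transfer $A\cap B=\emptyset$ iff $\widetilde{A}\cap\widetilde{B}=\mathsf{O}$ underlying Proposition \ref{remotebilocaleprop}.
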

	
	As a result of Lemma \ref{biclo}, we have the following proposition which shows that in the class of $\sup$-T$_{D}$-bispaces, the definition of $i$-density given in Definition \ref{idensesub} is conservative in bilocales.% in a sense that a subset $A$ of a $\sup$-T$_{D}$-bispace $(X,\tau_{1},\tau_{2})$ is $i$-dense precisely when $\widetilde{A}$ is $i$-dense in the bilocale $(\tau,\tau_{1},\tau_{2})$.
	\begin{proposition}
		Let $A$ be a subset of a $\sup$-T$_{D}$-bispace $(X,\tau_{1},\tau_{2})$. Then $A$ is $i$-dense iff $\widetilde{A}$ is $i$-dense.
	\end{proposition}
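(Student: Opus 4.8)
The plan is to reduce the statement to the closure lemma just proved. By Definition \ref{idensesub}, $\widetilde{A}$ is $i$-dense precisely when $\cl_{i}(\widetilde{A})=\tau$, where $\tau=\tau_{1}\vee\tau_{2}$ is the total part of the bilocale $(\tau_{1}\vee\tau_{2},\tau_{1},\tau_{2})$. On the space side, $A$ is $i$-dense precisely when $\cl_{\tau_{i}}(A)=X$. So the whole proof amounts to translating ``$\cl_{\tau_{i}}(A)=X$'' into ``$\cl_{i}(\widetilde{A})=\tau$'' via Lemma \ref{biclo}, which gives $\cl_{i}(\widetilde{A})=\widetilde{\cl_{\tau_{i}}(A)}$.

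First I would rewrite $\cl_{i}(\widetilde{A})$ as $\widetilde{\cl_{\tau_{i}}(A)}$ using Lemma \ref{biclo}. It then suffices to show that $\widetilde{\cl_{\tau_{i}}(A)}=\tau$ if and only if $\cl_{\tau_{i}}(A)=X$; since $\cl_{\tau_{i}}(A)$ is a subset of $X$ to which the tilde construction applies, this is a special case of the general fact that for a subset $B\subseteq X$ one has $\widetilde{B}=\tau$ (the whole locale) if and only if $B=X$. For the forward direction I would use the pointwise characterization highlighted in the displayed quote above, namely that in a $\sup$-T$_{D}$ bispace $x\in B$ iff $\widetilde{x}\in\widetilde{B}$: if $\widetilde{B}=\tau$ then every induced point $\widetilde{x}$ lies in $\widetilde{B}$, hence every $x\in X$ lies in $B$, so $B=X$. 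The reverse direction is immediate, since $B=X$ forces $\widetilde{B}=\widetilde{X}=\tau$ by the definition of the induced sublocale (with $X\smallsetminus B=\emptyset$, every element of $\tau$ is of the required form).

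Putting these together, the equivalence chain reads: $A$ is $i$-dense $\iff\cl_{\tau_{i}}(A)=X\iff\widetilde{\cl_{\tau_{i}}(A)}=\tau\iff\cl_{i}(\widetilde{A})=\tau\iff\widetilde{A}$ is $i$-dense, where the middle equivalence is the $\sup$-T$_{D}$ point-membership argument and the third uses Lemma \ref{biclo}. The only delicate point is the need for the $\sup$-T$_{D}$ hypothesis in the step $\widetilde{B}=\tau\Rightarrow B=X$; without it one cannot recover points of $X$ from points of the induced sublocale, so the hard part is simply ensuring that the point-membership equivalence is invoked correctly and that $\cl_{\tau_{i}}(A)$ genuinely qualifies as a subset to which the tilde operator and Lemma \ref{biclo} apply. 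Everything else is a routine unwinding of definitions.
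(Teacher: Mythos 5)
Your proposal is correct and takes essentially the same route as the paper: the identical chain of equivalences $A$ $i$-dense $\iff \cl_{\tau_{i}}(A)=X \iff \widetilde{\cl_{\tau_{i}}(A)}=\widetilde{X} \iff \cl_{i}(\widetilde{A})=\widetilde{X} \iff \widetilde{A}$ $i$-dense, hinging on Lemma \ref{biclo}. The only difference is that the paper leaves the middle equivalence ($\widetilde{B}=\widetilde{X}$ iff $B=X$) implicit, whereas you justify it explicitly via the $\sup$-T$_{D}$ point-membership fact; this is a fair gap to fill, not a different proof.
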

	\begin{proof}
		A subset $A$ of $X$ is $i$-dense if and only if $\cl_{\tau_{i}}(A)=X$ if and only if $\widetilde{\cl_{\tau_{i}}(A)}=\widetilde{X}$ if and only if $\cl_{i}(\widetilde{A})=\widetilde{X}$ if and only if $\widetilde{A}$ is $i$-dense.
	\end{proof}
	
	We give an elementary notion of $i$-density.
	
	\begin{definition}\label{idense}
		Define an element $x\in L_{j}$ of a bilocale $(L,L_{1},L_{2})$ to be \textit{$L_{i}$-dense} (or just \textit{$i$-dense}) if $x^{\bullet}=0$. 
		%Define an element $x$ of a bilocale $(L,L_{1},L_{2})$ to be $i$-dense if $$x\wedge y=0\quad\text{implies} \quad y=0$$ for every $y\in L_{i}$.
	\end{definition}
	
	The following result gives a characterization of $i$-dense elements.
	\begin{proposition}\label{bullet}
		Let $(L,L_{1},L_{2})$ be a bilocale and $x\in L_{j}$. Then the following statements are equivalent.
		\begin{enumerate}
			\item $x$ is $i$-dense.
			\item $\mathfrak{o}(x)$ is $i$-dense.
			\item For all $a\in L_{i}$, $a\wedge x=0$ implies $a=0$.
		\end{enumerate} 
	\end{proposition}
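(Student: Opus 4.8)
The plan is to route everything through statement (1), proving the two equivalences (1)$\Leftrightarrow$(3) and (1)$\Leftrightarrow$(2) separately, since both reduce almost immediately to the defining formula for the bilocale pseudocomplement together with one previously established identity. Throughout I will use that, under the standing convention $i\neq j\in\{1,2\}$, the pseudocomplement formula stated for $c\in L_{i}$ may be read with the roles of $i$ and $j$ interchanged, so that for our $x\in L_{j}$ we have $x^{\bullet}=\bigvee\{a\in L_{i}:a\wedge x=0\}$.

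For (1)$\Leftrightarrow$(3), I would work directly from this formula. If $x$ is $i$-dense, i.e. $x^{\bullet}=0$, then any $a\in L_{i}$ with $a\wedge x=0$ satisfies $a\leq x^{\bullet}=0$, forcing $a=0$. Conversely, if the only $a\in L_{i}$ with $a\wedge x=0$ is $a=0$, then the index set defining $x^{\bullet}$ collapses to $\{0\}$, whence $x^{\bullet}=0$. Thus (1) and (3) are just two phrasings of the same condition.

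For (1)$\Leftrightarrow$(2), the key input is Proposition \ref{int}(10), which, again read with $i,j$ swapped, gives $\cl_{i}(\mathfrak{o}(x))=\mathfrak{c}(x^{\bullet})$ for $x\in L_{j}$. By Definition \ref{idensesub}, $\mathfrak{o}(x)$ is $i$-dense precisely when $\cl_{i}(\mathfrak{o}(x))=L$, that is, when $\mathfrak{c}(x^{\bullet})=L$. Since $\mathfrak{c}(a)=\{y\in L:a\leq y\}$ equals $L$ if and only if $a=0$, this says exactly $x^{\bullet}=0$, i.e. $x$ is $i$-dense. Hence (1) and (2) are equivalent, closing the argument.

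I do not anticipate a genuine obstacle here; the one point requiring care is the bookkeeping of indices, namely confirming that the identities stated for $c^{\bullet}$ and in Proposition \ref{int}(10), both written for an element of $L_{i}$, may legitimately be applied to our $x\in L_{j}$ by the symmetry of the $(i,j)$-convention. Everything else is a direct unwinding of the two relevant definitions.
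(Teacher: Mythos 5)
Your proof is correct. The equivalence $(1)\Leftrightarrow(2)$ is handled exactly as in the paper: both arguments apply Proposition \ref{int}(10) with the indices interchanged to obtain $\cl_{i}(\mathfrak{o}(x))=\mathfrak{c}(x^{\bullet})$, and then observe that $\mathfrak{c}(x^{\bullet})=L$ precisely when $x^{\bullet}=0$. Where you genuinely differ is in how $(3)$ is attached. The paper closes a cycle: it proves $(2)\Rightarrow(3)$ by closure monotonicity (from $x\wedge a=0$ one gets $\mathfrak{o}(x)\subseteq\mathfrak{c}(a)$, hence $L=\cl_{i}(\mathfrak{o}(x))\subseteq\cl_{i}(\mathfrak{c}(a))=\mathfrak{c}(a)$, using Proposition \ref{int}(2) and (4)), and then $(3)\Rightarrow(1)$ by applying Proposition \ref{pseudo}(2) to the element $x^{\bullet}\in L_{i}$, which satisfies $x^{\bullet}\wedge x=0$ and is therefore forced to be $0$. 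You instead prove $(1)\Leftrightarrow(3)$ in one stroke, directly from the join formula $x^{\bullet}=\bigvee\{a\in L_{i}:a\wedge x=0\}$ (in effect, Proposition \ref{pseudo}(3)). Your route is slightly more economical and makes transparent that $(1)$ and $(3)$ are two phrasings of the same condition, bypassing the closure machinery for this part; the paper's cycle, on the other hand, derives $(3)$ from the sublocale-level density of $\mathfrak{o}(x)$, so it exercises the interplay between element-level and sublocale-level $i$-density a bit more. Your worry about index bookkeeping is the right thing to check and is correctly resolved: the definition of $(-)^{\bullet}$ and Proposition \ref{int}(10) are stated for a generic pair $i\neq j\in\{1,2\}$, so they apply verbatim with the roles of $i$ and $j$ interchanged.
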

	\begin{proof}
		$(1)\Leftrightarrow(2)$: Observe that for any $x\in L_{j}$,	\begin{align*}
			x^{\bullet}=0
			&\quad \Leftrightarrow\quad
			\mathfrak{c}(x^{\bullet})=L\\
			&\quad \Leftrightarrow\quad
			\cl_{i}(\mathfrak{o}(x))=L\quad\text{since}\quad \mathfrak{c}(x^{\bullet})=\cl_{i}(\mathfrak{o}(x))\ \text{from Proposition \ref{int}(10)}\\
			&\quad \Leftrightarrow\quad
			\mathfrak{o}(x) \quad\text{is}\quad j\text{-dense}.
		\end{align*}
		$(2)\Rightarrow(3)$: If $a\in L_{i}$ such that $x\wedge a=0$, then $\mathfrak{o}(x)\subseteq\mathfrak{c}(a)$ which implies that $$L=\mathfrak{c}(0)=\cl_{i}(\mathfrak{o}(x))\subseteq\cl_{i}(\mathfrak{c}(a))=\mathfrak{c}(a).$$ Thus $a=0$.

		$(3)\Rightarrow(1)$: Recall that $x^{\bullet}\wedge x=0$ by Proposition \ref{pseudo}(2). The hypothesis gives $x^{\bullet}=0$. Thus $x$ is $i$-dense.
	\end{proof}
	
	\begin{obs}\label{idenseobs}
		It is easy to see that an element $x\in L_{j}$ that is dense in $L$ is $i$-dense.
	\end{obs}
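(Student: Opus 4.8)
The plan is to compare the bilocale pseudocomplement $x^{\bullet}$ of $x$ with the ordinary pseudocomplement $x^{*}$ computed in the total part $L$, and to show that the former is dominated by the latter. For $x\in L_{j}$, the defining formula gives $x^{\bullet}=\bigvee\{a\in L_{i}:a\wedge x=0\}$, whereas saying that $x$ is dense in $L$ means exactly that $x^{*}=0$.

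First I would note that every element $a\in L_{i}$ occurring in the join that defines $x^{\bullet}$ satisfies $a\wedge x=0$, and hence $a\leq x^{*}$ by the characteristic adjunction property of the pseudocomplement in $L$ (namely $a\leq x^{*}$ iff $a\wedge x=0$). Passing to the supremum over all such $a$ then yields $x^{\bullet}\leq x^{*}$. Consequently, if $x$ is dense in $L$, so that $x^{*}=0$, the inequality $x^{\bullet}\leq x^{*}=0$ forces $x^{\bullet}=0$, which is precisely the assertion that $x$ is $i$-dense.

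There is no genuine obstacle here: the entire content reduces to the observation that $x^{\bullet}$ is a join of elements of $L$ each of which annihilates $x$, and therefore cannot exceed the largest such element $x^{*}$. One could equally phrase the argument through Proposition~\ref{bullet}, verifying condition~(3) directly, but the one-line comparison $x^{\bullet}\leq x^{*}$ is the most economical route, which is why the author labels it ``easy to see.''
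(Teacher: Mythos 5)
Your proof is correct and is precisely the argument the paper leaves implicit in calling the observation ``easy to see'': since $x^{\bullet}$ is a join over elements $a\in L_{i}$ satisfying $a\wedge x=0$, each such $a$ satisfies $a\leq x^{*}$, whence $x^{\bullet}\leq x^{*}$, and density of $x$ in $L$ then forces $x^{\bullet}=0$. Nothing further is needed.
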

	
	%\begin{proposition}\label{i-dense}
	%Let $(L,L_{1},L_{2})$ be a bilocale and $x\in L$. Then the following statements are equivalent.
	%\begin{enumerate}
	%\item $x$ is $i$-dense.
	%\item $\mathfrak{o}(x)$ is $i$-dense.
	%\item $x\wedge y=0$ implies $y=0$, for every $y\in L_{i}$.
	%	\end{enumerate} 
	%\end{proposition}
	%\begin{proof}
	%$(1\Rightarrow 2):$ Since, for all $a\in L_{i}$ such that $x\wedge a=0$, we have $a=0$,  therefore $$\bigvee\{a\in L_{i}:x\wedge a=0\}=\bigvee\{a\in L_{i}:\mathfrak{o}(x)\subseteq\mathfrak{c}(a)\}=0.$$ So, $$\cl_{i}(\mathfrak{o}(x))=\mathfrak{c}\big(\bigvee\{a\in L_{i}:\mathfrak{o}(x)\subseteq\mathfrak{c}(a)\}\big)=L.$$
	%$(2\Rightarrow 1):$ If $a\in L_{i}$ such that $x\wedge a=0$, then $$a\leq\bigvee\{a\in L_{i}:x\wedge a=0\}=0.$$
	%\end{proof}
	
	In the next result, we show that in the class of $\sup$-T$_{D}$-bispaces, the definition of $(i,j)$-nowhere density given in Definition \ref{ijnowheredense} is conservative in bilocales. Recall that for each $U\in \tau_{i}$, $$U^{\bullet}=\bigvee\{G\in\tau_{j}:G\cap U=\emptyset\}=\bigvee\{G\in\tau_{j}:G\subseteq X\smallsetminus U\}=\inte_{\tau_{j}}(X\smallsetminus U)=X\smallsetminus\cl_{\tau_{j}}(U).$$%\begin{align*}
	%U^{\bullet}&=\bigvee\{G\in\tau_{j}:G\cap U=\emptyset\}\\
	%&=\bigvee\{G\in\tau_{j}:G\subseteq X\smallsetminus U\}\\
	%&=\bigvee\{G\in\tau_{j}:\mathfrak{o}(G)\subseteq\inte_{i}(\mathfrak{o}(U))\}\text{ by Proposition \ref{int}(9)}\\
	%&=\bigvee\{G\in\tau_{j}:\mathfrak{o}(G)\subseteq \tau\smallsetminus\cl_{i}(\mathfrak{c}(U))\}\text{ by Proposition \ref{int}(15)}\\
	%&=\inte_{\tau_{j}}(X\smallsetminus U)\\
	%&=X\smallsetminus\cl_{\tau_{j}}(U).
	%\end{align*}
	
	\begin{proposition}\label{ijndinduced}
		Let $(X,\tau_{1},\tau_{2})$ be a $\sup$-T$_{D}$-bispace. A subset $A\subseteq X$ is $(\tau_{i},\tau_{j})$-nowhere dense iff $\widetilde{A}$ is $(i,j)$-nowhere dense.
	\end{proposition}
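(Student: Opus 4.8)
The plan is to prove the equivalence by a single chain of identifications that rewrites the bilocalic quantity $\Int_{j}(\cl_{i}(\widetilde{A}))$ as an open sublocale $\mathfrak{o}(-)$ whose defining element is exactly $\inte_{\tau_{j}}(\cl_{\tau_{i}}(A))$. Once this is done, the whole statement collapses to the elementary fact that an open sublocale $\mathfrak{o}(a)$ equals $\mathsf{O}$ if and only if $a=0$. The closure half of the computation is supplied for free by Lemma \ref{biclo}, which gives $\cl_{i}(\widetilde{A})=\widetilde{\cl_{\tau_{i}}(A)}$; all that remains is to analyse the bilocale interior $\Int_{j}$ of this induced sublocale.

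First I would note that $\cl_{\tau_{i}}(A)$ is a $\tau_{i}$-closed set, so putting $U=X\smallsetminus\cl_{\tau_{i}}(A)\in\tau_{i}$, the induced sublocale of the closed set $\cl_{\tau_{i}}(A)$ is the closed sublocale $\mathfrak{c}(U)$ of $\tau$; that is, $\widetilde{\cl_{\tau_{i}}(A)}=\mathfrak{c}(U)$, by the standard behaviour of the notation $\widetilde{(-)}$ on closed sets. Because $U\in\tau_{i}=L_{i}$, Proposition \ref{int}(11) is applicable and yields $\Int_{j}(\mathfrak{c}(U))=\mathfrak{o}(U^{\bullet})$. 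Finally, the displayed identity recorded just before the statement gives $U^{\bullet}=\inte_{\tau_{j}}(X\smallsetminus U)=\inte_{\tau_{j}}(\cl_{\tau_{i}}(A))$.

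Assembling these steps yields $\Int_{j}(\cl_{i}(\widetilde{A}))=\mathfrak{o}\bigl(\inte_{\tau_{j}}(\cl_{\tau_{i}}(A))\bigr)$, so this sublocale is $\mathsf{O}$ precisely when $\inte_{\tau_{j}}(\cl_{\tau_{i}}(A))=\emptyset$, which is exactly the condition that $A$ be $(\tau_{i},\tau_{j})$-nowhere dense. I expect the only genuinely delicate point to be the identification $\widetilde{\cl_{\tau_{i}}(A)}=\mathfrak{c}(U)$: one must check both that the complement $U$ of the $\tau_{i}$-closure actually lands in $L_{i}$, so that Proposition \ref{int}(11) applies, and that $\widetilde{(-)}$ really does send a closed set to the corresponding closed sublocale of $\tau$. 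The former holds because $\tau_{i}$-closed sets have $\tau_{i}$-open complements, and the latter is covered by the assumption (recorded before Lemma \ref{biclo}) that $\widetilde{(-)}$ enjoys all the usual properties of induced sublocales; the $\sup\text{-T}_{D}$ hypothesis itself enters the argument only through the invocation of Lemma \ref{biclo}.
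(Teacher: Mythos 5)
Your proposal is correct and follows essentially the same route as the paper's own proof: both reduce the claim to the identity $\Int_{j}(\cl_{i}(\widetilde{A}))=\mathfrak{o}\bigl(\inte_{\tau_{j}}(\cl_{\tau_{i}}(A))\bigr)$ via Lemma \ref{biclo}, the identification of $\widetilde{\cl_{\tau_{i}}(A)}$ with the closed sublocale $\mathfrak{c}(X\smallsetminus\cl_{\tau_{i}}(A))$, the formula $\Int_{j}(\mathfrak{c}(a))=\mathfrak{o}(a^{\bullet})$ for $a\in L_{i}$, and the recalled computation of $U^{\bullet}$ for $U\in\tau_{i}$. Incidentally, your citation of Proposition \ref{int}(11) is the accurate one; the paper's proof invokes item (9) where (11) is what is actually used.
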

	\begin{proof}
		%We show that $A\subseteq X$ is $(\tau_{1},\tau_{2})$-nowhere dense if and only if $\widetilde{A}$ is $(1,2)$-nowhere dense.
		Observe that 
		\begin{align*}
			\Int_{\tau_{j}}(\cl_{\tau_{i}}(A))=\emptyset
			&\quad \Leftrightarrow\quad
			X\smallsetminus\Int_{\tau_{j}}(\cl_{\tau_{i}}(A))=X\\
			%&\quad \Leftrightarrow\quad
			%\cl_{\tau_{j}}(X\smallsetminus\cl_{\tau_{i}}(A))=X\\
			&\quad \Leftrightarrow\quad
			X\smallsetminus\cl_{\tau_{j}}(X\smallsetminus\cl_{\tau_{i}}(A))=\emptyset\\
			&\quad \Leftrightarrow\quad
			(X\smallsetminus\cl_{\tau_{i}}(A))^{\bullet}=\emptyset\quad\text{since}\quad U^{\bullet}=X\smallsetminus \cl_{\tau_{j}}(U)\quad\text{for all}\quad U\in\tau_{i}\\
			&\quad \Leftrightarrow\quad
			\mathfrak{o}((X\smallsetminus\cl_{\tau_{i}}(A))^{\bullet})=\mathsf{O}\\
			&\quad \Leftrightarrow\quad
			\Int_{j}(\mathfrak{c}(X\smallsetminus\cl_{\tau_{i}}(A)))=\mathsf{O}\quad\text{from Proposition \ref{int}(9)}\\
			&\quad \Leftrightarrow\quad
			\Int_{j}\left(\widetilde{\cl_{\tau_{i}}(A)}\right)=\mathsf{O}\quad\text{since}\quad\cl_{\tau_{i}}(A)\quad\text{is}\quad \tau\text{-closed} \\
			&\quad \Leftrightarrow\quad
			\Int_{j}\left(\cl_{i}(\widetilde{A})\right)=\mathsf{O}\quad\text{since}\quad \widetilde{\cl_{\tau_{i}}(A)}=\cl_{i}(\widetilde{A})
		\end{align*}which proves the result.
	\end{proof}
	
	We characterize $(i,j)$-nowhere dense sublocales in Theorem \ref{ijnd} below. We shall need the following lemma whose proof is straighforward.
	
	\begin{lemma}\label{ijndsubset}
		Let $(L,L_{1},L_{2})$ be a bilocale with $S, T\in\mathcal{S}(L)$. If $S$ is $(i,j)$-nowhere dense and $T\subseteq S$, then $T$ is $(i,j)$-nowhere dense.
	\end{lemma}
	
	\begin{theorem}\label{ijnd}
		Let $(L,L_{1},L_{2})$ be a bilocale and $S\in\mathcal{S}(L)$. The following statements are equivalent.
		\begin{enumerate}\label{ndbilocale}
			\item $S$ is $(i,j)$-nowhere dense.
			\item $L\smallsetminus\cl_{i}(S)$ is $j$-dense. 
			\item $\left(\bigvee\{x\in L_{i}:S\subseteq\mathfrak{c}(x)\}\right)^{\bullet}=0$.
			\item $\bigvee\{x\in L_{i}:S\subseteq\mathfrak{c}(x)\}$ is $j$-dense.
			\item $\cl_{i}(S)$ is $(i,j)$-nowhere dense.
			\item $\overline{S}$ is $(i,j)$-nowhere dense.
		\end{enumerate}
	\end{theorem}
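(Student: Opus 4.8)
The plan is to reduce every one of the six conditions to the single equation $w^{\bullet}=0$, where $w=\bigvee\{x\in L_{i}:S\subseteq\mathfrak{c}(x)\}$. Since $L_{i}$ is a subframe it is closed under joins, so $w\in L_{i}$, and by the very definition of bilocale closure we have $\cl_{i}(S)=\mathfrak{c}(w)$. This element $w$ is therefore the natural quantity to track throughout. I would begin by recording this identity together with the two elementary facts that $\mathfrak{o}(a)=\mathsf{O}$ exactly when $a=0$, and $\mathfrak{c}(a)=L$ exactly when $a=0$, which convert the sublocale equalities appearing in the statement into equalities of elements.

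First I would handle $(1)\Leftrightarrow(3)$. Since $\cl_{i}(S)=\mathfrak{c}(w)$ with $w\in L_{i}$, Proposition \ref{int}(11) gives $\Int_{j}(\cl_{i}(S))=\Int_{j}(\mathfrak{c}(w))=\mathfrak{o}(w^{\bullet})$, and this equals $\mathsf{O}$ precisely when $w^{\bullet}=0$, which is condition $(3)$. The equivalence $(3)\Leftrightarrow(4)$ is then immediate from Definition \ref{idense}, because $w\in L_{i}$ and ``$w$ is $j$-dense'' means exactly $w^{\bullet}=0$. For $(2)\Leftrightarrow(3)$ I would use that open and closed sublocales induced by the same element are complementary, so $L\smallsetminus\cl_{i}(S)=L\smallsetminus\mathfrak{c}(w)=\mathfrak{o}(w)$; Proposition \ref{int}(10) then yields $\cl_{j}(\mathfrak{o}(w))=\mathfrak{c}(w^{\bullet})$, whence the sublocale $\mathfrak{o}(w)$ is $j$-dense if and only if $\mathfrak{c}(w^{\bullet})=L$, that is, if and only if $w^{\bullet}=0$.

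It remains to fold in the two ``closure-stability'' conditions. For $(1)\Leftrightarrow(5)$ I would invoke idempotency of bilocale closure, Proposition \ref{int}(3): from $\cl_{i}(\cl_{i}(S))=\cl_{i}(S)$ we get $\Int_{j}(\cl_{i}(\cl_{i}(S)))=\Int_{j}(\cl_{i}(S))$, so the two nowhere-density statements literally coincide. For $(1)\Leftrightarrow(6)$ the only extra step is to establish $\cl_{i}(\overline{S})=\cl_{i}(S)$: the inclusions $S\subseteq\overline{S}\subseteq\cl_{i}(S)$ from Proposition \ref{int}(1), combined with monotonicity (Proposition \ref{int}(2)) and idempotency, squeeze the two closures together, after which condition $(6)$ reads $\Int_{j}(\cl_{i}(S))=\mathsf{O}$, i.e. $(1)$. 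Assembling these links gives the full cycle of equivalences.

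I do not expect a serious obstacle, since all the substantive work was already carried out in Proposition \ref{int}; the one point demanding care is the bookkeeping of the bilocale pseudocomplement $\bullet$, which interchanges $L_{i}$ and $L_{j}$, so that one must keep straight whether ``$i$-dense'' or ``$j$-dense'' is in play and apply parts (10) and (11) of Proposition \ref{int} with the correct roles of $i$ and $j$. Once $w^{\bullet}=0$ is identified as the common core of $(1)$--$(4)$ and closure-stability reduces $(5)$ and $(6)$ to $(1)$, the argument is essentially just reading off these recorded equivalences.
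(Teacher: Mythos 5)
Your proof is correct, and it is organized differently from the paper's. The paper proves a cycle $(1)\Leftrightarrow(2)\Leftrightarrow(3)\Leftrightarrow(4)\Leftrightarrow(5)$, $(5)\Rightarrow(6)\Rightarrow(1)$, in which $(1)\Leftrightarrow(2)$ and $(4)\Leftrightarrow(5)$ are carried out by long chains that unwind the definitions of $\Int_{j}$ and $\cl_{j}$ and use complementedness of $\cl_{i}(S)$, while $(5)\Rightarrow(6)\Rightarrow(1)$ is settled by the hereditariness of $(i,j)$-nowhere density (Lemma \ref{ijndsubset}) applied to the inclusions $\overline{S}\subseteq\cl_{i}(S)$ and $S\subseteq\overline{S}$. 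You instead make the element $w=\bigvee\{x\in L_{i}:S\subseteq\mathfrak{c}(x)\}$ the hub: $(1)\Leftrightarrow(3)$ falls out in one line from Proposition \ref{int}(11) applied to $\cl_{i}(S)=\mathfrak{c}(w)$ (a part of Proposition \ref{int} the paper's proof never invokes), your $(2)\Leftrightarrow(3)$ via Proposition \ref{int}(10) coincides with the paper's, $(5)$ becomes literally $(1)$ by idempotency of $\cl_{i}$, and for $(6)$ you prove the identity $\cl_{i}(\overline{S})=\cl_{i}(S)$ by squeezing with Proposition \ref{int}(1)--(3), again collapsing $(6)$ into $(1)$. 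What your route buys is economy and transparency: every condition is reduced to the single equation $w^{\bullet}=0$, the long $(4)\Leftrightarrow(5)$ computation disappears, and Lemma \ref{ijndsubset} is not needed at all (though you do need the small observation, which you correctly make, that $w\in L_{i}$ because $L_{i}$ is a subframe, so that parts (10) and (11) of Proposition \ref{int} apply). What the paper's route buys is that the one-way implications $(5)\Rightarrow(6)\Rightarrow(1)$ reuse a lemma that is needed elsewhere in the paper anyway, and its explicit chains keep the sublocale-level bookkeeping visible rather than delegating it to the closure identities.
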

	\begin{proof}
		$(1)\Leftrightarrow(2)$: We have that \begin{align*}
			\Int_{j}(\cl_{i}(S))=\mathsf{O}&\Leftrightarrow\mathfrak{o}\left(\bigvee\{a\in L_{j}:\mathfrak{o}(a)\subseteq\cl_{i}(S)\}\right)=\mathsf{O}\\
			%&\Leftrightarrow \bigvee\{a\in L_{j}:\mathfrak{o}(a)\subseteq\cl_{i}(S)\}=0\\
			&\Leftrightarrow \mathfrak{c}\left(\bigvee\{a\in L_{j}:\mathfrak{o}(a)\subseteq\cl_{i}(S)\}\right)=L\\
			&\Leftrightarrow \mathfrak{c}\left(\bigvee\{a\in L_{j}: (L\smallsetminus\cl_{i}(S))\subseteq \mathfrak{c}(a)\}\right)=L\quad\text{since }\cl_{i}(S)\text{ is complemented}.\\
			&\Leftrightarrow \cl_{j}(L\smallsetminus \cl_{i}(S))=L\quad\text{by Proposition \ref{int}(4)}.
		\end{align*}%$$\Int_{j}(\cl_{i}(S))=\mathsf{O}\quad\text{if and only if}\quad\quad
		%L\smallsetminus\Int_{j}(\cl_{i}(S))=L\quad \text{if and only if}\quad
		%\cl_{j}(L\smallsetminus \cl_{i}(S))=L.$$
		
		$(2)\Leftrightarrow(3)$: Observe that \begin{align*}
			\cl_{j}(L\smallsetminus\cl_{i}(S))=L
			&\quad \Leftrightarrow\quad
			\cl_{j}\left(L\smallsetminus\mathfrak{c}\left(\bigvee\{x\in L_{i}:S\subseteq\mathfrak{c}(x)\}\right)\right)=L\\
			&\quad \Leftrightarrow\quad
			\cl_{j}\left(\mathfrak{o}\left(\bigvee\{x\in L_{i}:S\subseteq\mathfrak{c}(x)\}\right)\right)=L\\
			&\quad \Leftrightarrow\quad
			\mathfrak{c}\left(\left(\bigvee\{x\in L_{i}:S\subseteq\mathfrak{c}(x)\}\right)^{\bullet}\right)=L\quad\text{by Proposition \ref{int}(10)}\\
			&\quad \Leftrightarrow\quad
			\left(\bigvee\{x\in L_{i}:S\subseteq\mathfrak{c}(x)\}\right)^{\bullet}=0.
		\end{align*}

		$(3)\Leftrightarrow(4)$: Follows from definition of $j$-density.

		$(4)\Leftrightarrow(5)$: We have that
		\begin{align*}
			\bigvee\{x\in L_{i}:S\subseteq\mathfrak{c}(x)\}\text{ is } j\text{-dense}
			&\Leftrightarrow
			\bigvee\{x\in L_{i}:\cl_{i}(S)\subseteq\cl_{i}(\mathfrak{c}(x))=\mathfrak{c}(x)\}\text{ is } j\text{-dense}\\
			& \Leftrightarrow
			\bigvee\{x\in L_{i}:\mathfrak{o}(x)\subseteq L\smallsetminus\cl_{i}(S)\}\text{ is } j\text{-dense}\\
			&\Leftrightarrow
			\cl_{j}\left(\mathfrak{o}\left(\bigvee\{x\in L_{i}:\mathfrak{o}(x)\subseteq L\smallsetminus\cl_{i}(S)\}\right)\right)=L \\
			&\Leftrightarrow
			\cl_{j}\left(\mathfrak{o}\left(\bigvee\{x\in L_{i}:S\subseteq\mathfrak{c}(x)\}\right)\right)=L \\
			&\Leftrightarrow
			\cl_{j}\left(L\smallsetminus\mathfrak{c}\left(\bigvee\{x\in L_{i}:S\subseteq\mathfrak{c}(x)\}\right)\right)=L \\
			&\Leftrightarrow
			\cl_{j}\left(L\smallsetminus \cl_{i}(S)\right)=L\\
			&\Leftrightarrow
			L\smallsetminus\cl_{j}\left(L\smallsetminus \cl_{i}(S)\right)=\mathsf{O} \\
			&\Leftrightarrow\mathfrak{o}\left(\bigvee\{a\in L_{j}:L\smallsetminus \cl_{i}(S)\subseteq\mathfrak{c}(a)\}\right)=\mathsf{O}\\
			&\Leftrightarrow\mathfrak{o}\left(\bigvee\{a\in L_{j}:\mathfrak{o}(a)\subseteq \cl_{i}(S)\}\right)=\mathsf{O}\\
			& \Leftrightarrow
			\Int_{j}\big(\cl_{i}(S)\big)=\mathsf{O}\\
			&\Leftrightarrow
			\Int_{j}\big(\cl_{i}(\cl_{i}(S))\big)=\mathsf{O}\quad\text{since } \cl_{i}(\cl_{i}(S))=\cl_{i}(S)\\
			&\Leftrightarrow
			\cl_{i}(S)\ \text{is}\ (i,j)\text{-nowhere dense}.
		\end{align*}
		
		$(5)\Rightarrow(6)$: Since $\overline{S}\subseteq\cl_{i}(S)$ and $\cl_{i}(S)$ is $(i,j)$-nowhere dense, it follows from Lemma \ref{ijndsubset} that $\overline{S}$ is $(i,j)$-nowhere dense.
		
		$(6)\Rightarrow(1)$: This is another application of Lemma \ref{ijndsubset}.
	\end{proof}
	
	In terms of closed sublocales, we get the following characterization of $(i,j)$-nowhere dense sublocales.
	
	\begin{corollary}\label{nd elements in bilocale}
		An element $a\in L_{i}$ is $j$-dense iff $\mathfrak{c}(a)$ is $(i,j)$-nowhere dense.
	\end{corollary}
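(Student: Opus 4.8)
The plan is to read this off Theorem~\ref{ijnd} by specializing the sublocale $S$ to the closed sublocale $\mathfrak{c}(a)$ and identifying the join that appears in conditions (3) and (4) of that theorem. The essential point is purely order-theoretic and will do all the work.

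First I would record the containment criterion for closed sublocales: for $x\in L$ we have $\mathfrak{c}(a)\subseteq\mathfrak{c}(x)$ if and only if $x\le a$. Indeed, since $a\in\mathfrak{c}(a)$, containment forces $a\in\mathfrak{c}(x)$, i.e.\ $x\le a$; conversely, if $x\le a$ then every $y\ge a$ also satisfies $y\ge x$, so $\mathfrak{c}(a)\subseteq\mathfrak{c}(x)$. Restricting to $x\in L_i$ and using that $a$ itself lies in $L_i$, the indexing set becomes $\{x\in L_i:\mathfrak{c}(a)\subseteq\mathfrak{c}(x)\}=\{x\in L_i:x\le a\}$, whose largest member is $a$. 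Hence
$$\bigvee\{x\in L_i:\mathfrak{c}(a)\subseteq\mathfrak{c}(x)\}=a.$$

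With this identity in hand I would apply Theorem~\ref{ijnd} to $S=\mathfrak{c}(a)$. The equivalence $(1)\Leftrightarrow(4)$ says that $\mathfrak{c}(a)$ is $(i,j)$-nowhere dense exactly when $\bigvee\{x\in L_i:\mathfrak{c}(a)\subseteq\mathfrak{c}(x)\}$ is $j$-dense; by the computation above this join is $a$, so the condition reads precisely ``$a$ is $j$-dense,'' which is the assertion. Alternatively one may invoke $(1)\Leftrightarrow(3)$ to reach $a^{\bullet}=0$ directly and then appeal to Definition~\ref{idense}; both routes close the argument in one line once the join is evaluated.

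There is no genuine obstacle here: the proof is a direct substitution into Theorem~\ref{ijnd}, and the only step needing care is the identification of the supremum, which in turn relies only on the simple containment criterion for closed sublocales together with the fact that $a\in L_i$, so that the join remains inside the chosen subframe and equals $a$.
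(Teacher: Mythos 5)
Your proof is correct and is exactly the argument the paper intends: the corollary is presented as an immediate consequence of Theorem~\ref{ijnd}, obtained by specializing $S=\mathfrak{c}(a)$ and using the equivalence $(1)\Leftrightarrow(4)$ (or $(1)\Leftrightarrow(3)$ with Definition~\ref{idense}). Your explicit evaluation $\bigvee\{x\in L_{i}:\mathfrak{c}(a)\subseteq\mathfrak{c}(x)\}=a$, valid because $a\in L_{i}$ is itself the largest element of the indexing set, supplies the one detail the paper leaves unstated.
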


\begin{rem}
	For any bilocale $(L,L_{1},L_{2})$ and every sublocale $S$ of $L$, $L\smallsetminus\cl_{i}(S)$ is clopen if and only if $\bigvee\{x\in L_{i}:\mathfrak{o}(x)\subseteq S\}$ is complemented in $L$: Follows since $L\smallsetminus\cl_{i}(S)$ is an open sublocale of $L$. 
\end{rem}

As a result of the preceding lemma, we have the following result about clopen $(i,j)$-nowhere dense sublocales. The proof is similar to that of Theorem \ref{ijnd} and shall be omitted.

\begin{proposition}Let $(L,L_{1},L_{2})$ be a bilocale and $S\in\mathcal{S}(L)$. The following statements are equivalent:
	\begin{enumerate}\label{ndbilocaleclopen}
		\item $S$ is clopen $(i,j)$-nowhere dense.
		\item $L\smallsetminus\cl_{i}(S)$ is clopen $j$-dense. 
		\item $\left(\bigvee\{x\in L_{i}:S\subseteq\mathfrak{c}(x)\}\right)^{\bullet}=0$ and $\bigvee\{x\in L_{i}:S\subseteq\mathfrak{c}(x)\}$ is complemented in $L$.
		\item $\bigvee\{x\in L_{i}:S\subseteq\mathfrak{c}(x)\}$ is $j$-dense and complemented in $L$.
		\item $\cl_{i}(S)$ is clopen $(i,j)$-nowhere dense.
		\item $\overline{S}$ is clopen $(i,j)$-nowhere dense.
	\end{enumerate}
\end{proposition}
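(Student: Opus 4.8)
The plan is to imitate the proof of Theorem \ref{ijnd} line for line, attaching to each of its equivalences the extra complementedness clause, and to organise everything around the single intrinsic element
\[
c=\bigvee\{x\in L_{i}:S\subseteq\mathfrak{c}(x)\},
\]
for which $\cl_{i}(S)=\mathfrak{c}(c)$ and hence $L\smallsetminus\cl_{i}(S)=\mathfrak{o}(c)$. Before starting the cycle I would record two bookkeeping facts. First, by the preceding Remark the open sublocale $\mathfrak{o}(c)=L\smallsetminus\cl_{i}(S)$ is clopen exactly when $c$ is complemented in $L$; since $\mathfrak{c}(c)$ is the complementary sublocale of $\mathfrak{o}(c)$ and the complement of a clopen sublocale is clopen, the closed sublocale $\cl_{i}(S)=\mathfrak{c}(c)$ is clopen under precisely the same condition. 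Second, the element $c$ is insensitive to replacing $S$ by $\overline{S}$ or by $\cl_{i}(S)$: for $x\in L_{i}$ one has $S\subseteq\mathfrak{c}(x)$ iff $\overline{S}\subseteq\mathfrak{c}(x)$ iff $\cl_{i}(S)\subseteq\mathfrak{c}(x)$ (all equivalent to $x\le c$), so by Proposition \ref{int}(3) we get $\cl_{i}(\overline{S})=\cl_{i}(\cl_{i}(S))=\cl_{i}(S)=\mathfrak{c}(c)$.

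With these in hand each statement splits cleanly into a nowhere-dense part and a clopenness part, and I would run the same chain as in Theorem \ref{ijnd}. For $(2)\Leftrightarrow(3)$ I combine Theorem \ref{ijnd}$(2)\Leftrightarrow(3)$, which identifies ``$\mathfrak{o}(c)$ is $j$-dense'' with ``$c^{\bullet}=0$'', with the first bookkeeping fact, which identifies ``$\mathfrak{o}(c)$ is clopen'' with ``$c$ complemented''. Then $(3)\Leftrightarrow(4)$ is just the definition of $j$-density of $c$, the complementedness clause being carried along unchanged; and $(4)\Leftrightarrow(5)$ uses Theorem \ref{ijnd}$(4)\Leftrightarrow(5)$ for the nowhere-dense part together with ``$c$ complemented $\Leftrightarrow$ $\mathfrak{c}(c)=\cl_{i}(S)$ clopen''. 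Finally $(5)\Rightarrow(6)\Rightarrow(1)\Rightarrow\cdots$ follows from the corresponding implications of Theorem \ref{ijnd} (via Lemma \ref{ijndsubset}) for the nowhere-dense part, while for the clopenness part the second bookkeeping fact shows that the relevant $i$-closures of $S$, $\overline{S}$ and $\cl_{i}(S)$ are the one and the same sublocale $\mathfrak{c}(c)$, so the clopenness clause is literally the single condition ``$c$ complemented'' in $(1)$, $(5)$ and $(6)$.

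The main obstacle is exactly the point addressed by the second bookkeeping fact. In Theorem \ref{ijnd} the passage among $S$, $\cl_{i}(S)$ and $\overline{S}$ only had to preserve nowhere density, but here it must simultaneously preserve the clopenness clause, and $S$ is not assumed closed, so this clause cannot be read off from $S$ directly. The resolution is that in every one of the six statements the clopenness is governed not by the sublocale named there but by its common $i$-closure $\cl_{i}(S)=\mathfrak{c}(c)$; once the identity $\cl_{i}(\overline{S})=\cl_{i}(S)$ is established, the clopenness requirement collapses to complementedness of the single element $c$, and the six-fold equivalence reduces to Theorem \ref{ijnd} together with the preceding Remark. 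The remaining implications are routine and entirely parallel to the proof of Theorem \ref{ijnd}, which is why they may safely be omitted.
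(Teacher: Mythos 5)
Your treatment of the block $(2)\Leftrightarrow(3)\Leftrightarrow(4)\Leftrightarrow(5)$ is correct, and it is exactly the route the paper intends: Theorem \ref{ijnd} supplies the nowhere-density part, and the preceding Remark identifies clopenness of $\mathfrak{o}(c)=L\smallsetminus\cl_{i}(S)$, equivalently of its complement $\mathfrak{c}(c)=\cl_{i}(S)$, with complementedness of $c$ in $L$. The gap is precisely at the point you call the ``resolution'' of the main obstacle: the claim that in statements $(1)$ and $(6)$ the clopenness clause ``is governed not by the sublocale named there but by its common $i$-closure'', i.e.\ that ``$S$ is clopen'' (resp.\ ``$\overline{S}$ is clopen'') is equivalent to ``$c$ is complemented''. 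Your second bookkeeping fact only shows that $c$ is unchanged when $S$ is replaced by $\overline{S}$ or $\cl_{i}(S)$; it does not show that clopenness of $S$ itself can be read off from $\cl_{i}(S)$, and in fact it cannot: a clopen sublocale can have a non-clopen $i$-closure.

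Concretely, let $X=\{1,2,3\}$, $\tau_{1}=\{\emptyset,\{1\},\{3\},\{1,3\},X\}$ and $\tau_{2}=\{\emptyset,\{1,2\},X\}$, so that $\tau=\tau_{1}\vee\tau_{2}=\{\emptyset,\{1\},\{3\},\{1,2\},\{1,3\},X\}$, and consider the bilocale $(\tau,\tau_{1},\tau_{2})$ with $S=\mathfrak{c}(\{1,2\})$, $i=1$, $j=2$. Then $S$ is clopen, since the complement $\{3\}$ of $\{1,2\}$ lies in $\tau$, and $S$ is $(1,2)$-nowhere dense: $\cl_{1}(S)=\mathfrak{c}(\{1\})$ because $\{1\}$ is the largest member of $\tau_{1}$ contained in $\{1,2\}$, and $\{1\}^{\bullet}=\emptyset$ because every nonempty member of $\tau_{2}$ contains the point $1$, so that $\Int_{2}(\cl_{1}(S))=\mathfrak{o}(\{1\}^{\bullet})=\mathsf{O}$ by Proposition \ref{int}(11). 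Hence $(1)$ holds, and so does $(6)$ since $\overline{S}=S$. However $c=\{1\}$ is not complemented in $\tau$ (as $\{2,3\}\notin\tau$), so $(3)$ and $(4)$ fail, and $\cl_{1}(S)=\mathfrak{c}(\{1\})$ and $L\smallsetminus\cl_{1}(S)=\mathfrak{o}(\{1\})$ are not clopen, so $(5)$ and $(2)$ fail. So the step closing your cycle through $(1)$ and $(6)$ is not a missing detail but a false assertion, and no argument can repair it: read literally, the equivalence holds only on the block $(2)$--$(5)$ (together with the trivial implication $(1)\Rightarrow(6)$). Your proof --- and equally the paper's omitted one --- goes through only if the clopenness in $(1)$ and $(6)$ is reinterpreted as clopenness of $\cl_{i}(S)$, which is the reinterpretation you adopted without justification; you correctly isolated the weak point of the statement, but then argued it away rather than noticing that it is where the proposition itself breaks down.
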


\begin{corollary}
	An element $a\in L_{i}$ is $j$-dense and complemented in $L$ iff $\mathfrak{c}(a)$ is clopen $(i,j)$-nowhere dense.
\end{corollary}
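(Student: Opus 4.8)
The plan is to derive this corollary as the closed-sublocale specialization of Proposition \ref{ndbilocaleclopen}, exactly mirroring the way Corollary \ref{nd elements in bilocale} is obtained from Theorem \ref{ijnd}. Concretely, I would take $S=\mathfrak{c}(a)$ for $a\in L_{i}$ and apply the equivalence $(1)\Leftrightarrow(4)$ of Proposition \ref{ndbilocaleclopen}.

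The only computation to carry out is the evaluation of the join $\bigvee\{x\in L_{i}:\mathfrak{c}(a)\subseteq\mathfrak{c}(x)\}$ appearing in that proposition. Since $\mathfrak{c}(a)\subseteq\mathfrak{c}(x)$ holds precisely when $x\leq a$, this index set is $\{x\in L_{i}:x\leq a\}$; because $a$ itself lies in $L_{i}$, it is the greatest member of the set, and hence the join equals $a$.

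With this identification, condition $(4)$ of Proposition \ref{ndbilocaleclopen} applied to $S=\mathfrak{c}(a)$ reads exactly ``$a$ is $j$-dense and complemented in $L$'', while condition $(1)$ reads ``$\mathfrak{c}(a)$ is clopen $(i,j)$-nowhere dense''. The equivalence of these two statements, guaranteed by the proposition, is precisely the assertion of the corollary.

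I expect no genuine obstacle here: the substitution into the already-established proposition is immediate, and the sole step requiring any thought is the join reduction above, which is a routine consequence of the order-reversing correspondence $a\mapsto\mathfrak{c}(a)$ together with the fact that $L_{i}$ is a subframe (so $a\in L_{i}$ is available as the supremum of $\{x\in L_{i}:x\leq a\}$).
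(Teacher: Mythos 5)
Your proposal is correct and matches the paper's (implicit) intent: the corollary is stated as an immediate consequence of Proposition \ref{ndbilocaleclopen}, obtained exactly as you describe by taking $S=\mathfrak{c}(a)$, noting that $\bigvee\{x\in L_{i}:\mathfrak{c}(a)\subseteq\mathfrak{c}(x)\}=\bigvee\{x\in L_{i}:x\leq a\}=a$ since $a\in L_{i}$, and invoking the equivalence $(1)\Leftrightarrow(4)$. Your join reduction is the same step that underlies the earlier Corollary \ref{nd elements in bilocale}, so there is nothing to add.
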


In locales, $\mathsf{O}$ is the only clopen nowhere dense sublocale of a locale. However, in bilocales, this need not be the case, as shown below.

	\begin{example}\label{BLnotnd}
		Recall that the void sublocale is the only clopen nowhere dense sublocale of a locale. This follows since if $S$ is a clopen and nowhere dense sublocale of a locale $L$, then $$\mathsf{O}=\inte(\overline{S})=\inte(S)=S.$$ In bilocales, for a given bilocale $(L,L_{1},L_{2})$, a sublocale $S$ of $L$ may be a clopen sublocale of $L$ and at the same be $(i,j)$-nowhere dense without being void. For instance, consider the bitopological space $(X,\tau_{1},\tau_{2})$, where $X=\{a,b,c\}$, $\tau_{1}=\{\emptyset,X,\{a\},\{b,c\}\}$ and $\tau_{2}=\{\emptyset,X,\{b\}\}$. We get that $\tau=\{\emptyset,X,\{a\},\{b\},\{b,c\},\{a,b\}\}$. It is clear that $\mathfrak{c}(\{b,c\})$ is a nonvoid clopen $(1,2)$-nowhere dense sublocale of $\tau$. %in Example \ref{BLnotnd}, the $(i,j)$-nowhere dense sublocale $\mathfrak{c}(\{b,c\})$ is clopen in $\tau$.
	\end{example}
	
	In locales, nowhere dense sublocales are precisely those that miss the least dense sublocale. We have the following result in bilocales.
	
	\begin{proposition}\label{smallest dense in bilocale}
		Let $(L,L_{1},L_{2})$ be a bilocale and $S\in\mathcal{S}(L)$. Then $S$ is $(i,j)$-nowhere dense whenever $\cl_{i}(S)\cap\mathfrak{B}L=\mathsf{O}$.
	\end{proposition}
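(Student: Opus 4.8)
The plan is to reduce this bilocalic assertion to a purely localic computation on the \emph{closed} sublocale $\cl_i(S)$. Recall from the defining formula that $\cl_i(S)=\mathfrak{c}(b)$, where $b=\bigvee\{x\in L_i:S\subseteq\mathfrak{c}(x)\}$, so $\cl_i(S)$ is closed. By Proposition \ref{int}(6) we have $\Int_j(\cl_i(S))\subseteq\Int(\cl_i(S))$, $\Int$ denoting the ordinary localic interior. Hence it suffices to prove $\Int(\cl_i(S))=\mathsf{O}$: this squeezes $\Int_j(\cl_i(S))$ down to $\mathsf{O}$, and then $S$ is $(i,j)$-nowhere dense directly from Definition \ref{ijnowheredense}.

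The crux of the argument, and really its only nonformal step, is to extract from the hypothesis $\cl_i(S)\cap\mathfrak{B}L=\mathsf{O}$ the density of the generator, namely $b^{*}=0$. For this I would use a membership trick. Since $b\le b^{**}$, the element $b^{**}$ lies in $\mathfrak{c}(b)=\cl_i(S)$; on the other hand $\mathfrak{B}L=\{x\to 0:x\in L\}$ consists precisely of the pseudocomplements of elements of $L$, and $b^{**}=(b^{*})^{*}$ is such a pseudocomplement, so $b^{**}\in\mathfrak{B}L$. Therefore $b^{**}\in\cl_i(S)\cap\mathfrak{B}L=\mathsf{O}=\{1\}$, forcing $b^{**}=1$ and hence $b^{*}=0$.

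Finally, once $b^{*}=0$ is in hand, I would observe that any open sublocale $\mathfrak{o}(x)$ contained in $\mathfrak{c}(b)$ must satisfy $x\wedge b=0$, i.e. $x\le b^{*}=0$, so $\mathfrak{o}(x)=\mathsf{O}$; thus $\cl_i(S)=\mathfrak{c}(b)$ contains no nonvoid open sublocale and $\Int(\cl_i(S))=\mathsf{O}$. Combined with the reduction of the first paragraph, this gives $\Int_j(\cl_i(S))=\mathsf{O}$, i.e. $S$ is $(i,j)$-nowhere dense. (Alternatively, having shown $b^{*}=0$, one may finish via Theorem \ref{ijnd}(3): the bilocale pseudocomplement satisfies $b^{\bullet}=\bigvee\{y\in L_j:y\le b^{*}\}=0$.) I expect no genuine obstacle beyond the density extraction; and the implication is inherently one-directional, since the inclusion $\Int_j\subseteq\Int$ cannot be reversed, so an $(i,j)$-nowhere dense sublocale need not miss $\mathfrak{B}L$.
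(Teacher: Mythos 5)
Your proof is correct, and it takes a genuinely different route from the paper's. Both arguments pivot on the same underlying fact --- the hypothesis forces ordinary localic density information, namely $b^{*}=0$ for the generator $b$ of $\cl_{i}(S)=\mathfrak{c}(b)$ --- but they extract and exploit it differently. The paper works at the sublocale level: since $\cl_{i}(S)$ is complemented, the hypothesis is equivalent to $\mathfrak{B}L\subseteq L\smallsetminus\cl_{i}(S)$, i.e.\ to $\overline{L\smallsetminus\cl_{i}(S)}=L$; Proposition \ref{int}(1) then gives $\cl_{j}(L\smallsetminus\cl_{i}(S))=L$, and the conclusion follows from the equivalence $\cl_{j}(L\smallsetminus\cl_{i}(S))=L\Leftrightarrow\Int_{j}(\cl_{i}(S))=\mathsf{O}$, which the paper has to extract from the proof of Theorem \ref{ijnd}(4)$\Leftrightarrow$(5). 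You instead argue element-wise: the membership trick $b^{**}\in\mathfrak{c}(b)\cap\mathfrak{B}L=\{1\}$ (legitimate, since $b\leq b^{**}$ and $b^{**}=b^{*}\rightarrow 0\in\mathfrak{B}L$) gives $b^{**}=1$, hence $b^{*}=0$, so no nonvoid open sublocale sits inside $\mathfrak{c}(b)$, and you finish with the interior comparison $\Int_{j}\subseteq\Int$ of Proposition \ref{int}(6) rather than the closure comparison of Proposition \ref{int}(1). What your route buys is self-containedness and economy: you never need to reopen the proof of Theorem \ref{ijnd}, only cited statements and elementary pseudocomplement identities, and your alternative ending via Theorem \ref{ijnd}(3) (since $b^{\bullet}\leq b^{*}=0$) is equally valid. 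What the paper's route buys is that its chain of equivalences isolates exactly the single one-directional step ($\overline{T}=L\Rightarrow\cl_{j}(T)=L$), which sets up the explicit counterexample to the converse given immediately after the proposition; in your formulation the corresponding loss of information sits in the unreversible inclusion $\Int_{j}\subseteq\Int$, as you correctly note, though you offer a heuristic rather than a counterexample.
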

	\begin{proof}
		Observe that \begin{align*}
			\cl_{i}(S)\cap\mathfrak{B}L=\mathsf{O}
			&\quad \Leftrightarrow\quad
			\mathfrak{B}L\subseteq L\smallsetminus\cl_{i}(S)\text{ since }\cl_{i}(S)\text{ is complemented}\\
			&\quad \Leftrightarrow\quad
			\overline{L\smallsetminus\cl_{i}(S)}=L\\
			&\quad \Rightarrow\quad
			\cl_{j}(L\smallsetminus\cl_{i}(S))=L\quad\text{by Proposition \ref{int}(1)}\\
			%&\quad\Leftrightarrow\quad
			%\cl_{j}\left(L\smallsetminus\mathfrak{c}\left(\bigvee\{x\in L_{i}:S\subseteq\mathfrak{c}(x)\}\right)\right)=L \\
			&\quad \Leftrightarrow\quad
			\Int_{j}(\cl_{i}(S))=\mathsf{O}%\quad\text{by Proposition \ref{ijnd}}.
		\end{align*}where the latter equivalence can be deduced from the proof of Proposition \ref{ijnd}$(4)\Leftrightarrow(5)$.
	\end{proof}
	The converse of Proposition \ref{smallest dense in bilocale} is not always true. For instance, using Theorem \ref{ijnd}(4), we see that, in Example \ref{BLnotnd}, the sublocale $\widetilde{\{a\}}=\{\{b,c\},X\}=\mathfrak{c}(\{b,c\})$ of $\tau$ is $(1,2)$-nowhere dense, but $\cl_{1}(\widetilde{\{a\}})=\widetilde{\{a\}}$ does not miss $\mathfrak{B}\tau=\{\emptyset,X,\{a\},\{b,c\}\}$.

	In Proposition \ref{smallest dense in bilocale wrt to Li} below, we improve Proposition \ref{smallest dense in bilocale}.
	
	Recall from \cite{FS} that a bilocale $(L,L_{1},L_{2})$ is
	\textit{balanced} if $x\in L_{1}$ implies $x^{*}\in L_{2}$  and $x\in L_{2}$ implies $x^{*}\in L_{1}$.
	In a balanced bilocale $(L,L_{1},L_{2})$, $a^{*}=a^{\bullet}$ for all $a\in L_{i}$. Indeed, it is clear that $a^{\bullet}\leq a^{*}$. Furthermore, if $y=a^{*}$, then $y\in L_{j}$ and $y\wedge a=0$. Therefore $y\in \{x\in L_{j}:a\wedge x=0\}$. Thus $$y=a^{*}\leq \bigvee\{x\in L_{j}:a\wedge x=0\}=a^{\bullet}.$$ 
	
	\begin{proposition}\label{smallest dense in bilocale wrt to Li}
		Let $(L,L_{1},L_{2})$ be a balanced bilocale and $N\in\mathcal{S}(L)$. Then $N\in\mathcal{S}(L)$ is $(i,j)$-nowhere dense iff $\mathfrak{B}L\cap \cl_{i}(N)=\mathsf{O}$.%Then following statements hold:
		%\begin{enumerate}
		%\item If $N\in\mathcal{S}(L)$ is $(i,j)$-nowhere dense, then $\mathfrak{B}L\cap \cl_{i}(N)=\mathsf{O}$.
		%	\item $N$ is $(i,j)$-nowhere dense iff $\mathfrak{B}L_{i}\cap \cl_{i}(N)=\mathsf{O}$. 
		%\end{enumerate} 
	\end{proposition}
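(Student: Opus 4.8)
The plan is to collapse the statement to a single generating element of $L_{i}$ and then let the balanced identity do the work. Write $c=\bigvee\{x\in L_{i}:N\subseteq\mathfrak{c}(x)\}$. Since $L_{i}$ is a subframe of $L$, it is closed under arbitrary joins, so $c\in L_{i}$, and by the very definition of the bilocale closure we have $\cl_{i}(N)=\mathfrak{c}(c)$. Thus the assertion to be proved is exactly that $N$ is $(i,j)$-nowhere dense if and only if $\mathfrak{c}(c)\cap\mathfrak{B}L=\mathsf{O}$, and everything reduces to tracking the element $c$.

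For the implication $(\Leftarrow)$ there is nothing new to do: the hypothesis $\mathfrak{B}L\cap\cl_{i}(N)=\mathsf{O}$ is precisely the antecedent of Proposition \ref{smallest dense in bilocale}, which already yields that $N$ is $(i,j)$-nowhere dense and which is valid in an arbitrary bilocale. So the content of the improvement lies entirely in the forward direction, and this is where balancedness is used.

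For $(\Rightarrow)$, I would assume $N$ is $(i,j)$-nowhere dense and chain the earlier results. By Theorem \ref{ijnd}$(1)\Leftrightarrow(3)$ this gives $c^{\bullet}=0$. Next I would invoke the balanced identity recorded just before the statement, namely $a^{*}=a^{\bullet}$ for every $a\in L_{i}$; applied to $c$ it yields $c^{*}=c^{\bullet}=0$, so $c$ is dense in $L$. Finally I would translate density of $c$ into $\mathfrak{c}(c)$ missing $\mathfrak{B}L$ via the standard localic computation $\Int(\mathfrak{c}(c))=\mathfrak{o}(c^{*})$: since $c^{*}=0$ this interior is $\mathfrak{o}(0)=\mathsf{O}$, and a closed sublocale with void interior is nowhere dense, i.e. $\mathfrak{c}(c)\cap\mathfrak{B}L=\mathsf{O}$. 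Unwinding $\mathfrak{c}(c)=\cl_{i}(N)$ then finishes the argument.

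The only nonformal step, and hence the place to be careful, is the passage $c^{\bullet}=c^{*}$: this is exactly where the balanced hypothesis enters, and it is what upgrades the one-sided implication of Proposition \ref{smallest dense in bilocale} to an equivalence. In a general bilocale one has only $c^{\bullet}\le c^{*}$ (indeed $c^{\bullet}\leq c^{*}$ always, while $c^{*}\le c^{\bullet}$ needs $c^{*}\in L_{j}$, which balancedness supplies), so $c^{\bullet}=0$ need not force $c^{*}=0$; this is precisely why the converse of Proposition \ref{smallest dense in bilocale} fails in general, as the example following it illustrates. Everything else is bookkeeping with the closed-sublocale identity $\cl_{i}(N)=\mathfrak{c}(c)$ and the elementary fact that a closed sublocale is nowhere dense exactly when its generating element is dense.
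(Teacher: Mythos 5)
Your proof is correct and takes essentially the same route as the paper: both reduce via Theorem \ref{ijnd}$(1)\Leftrightarrow(3)$ to $c^{\bullet}=0$ for $c=\bigvee\{x\in L_{i}:N\subseteq\mathfrak{c}(x)\}$, invoke the balanced identity $c^{*}=c^{\bullet}$, and then translate density of $c$ into $\mathfrak{c}(c)=\cl_{i}(N)$ missing $\mathfrak{B}L$. The paper simply runs this as one bidirectional chain of equivalences, whereas you split off the backward direction by citing Proposition \ref{smallest dense in bilocale}; the mathematical content is the same.
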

	\begin{proof}
		For each $N\in\mathcal{S}(L)$, we have that
		\begin{align*}
			N\quad\text{is}\quad(i,j)\text{-nowhere dense}
			&\Leftrightarrow
			\left(\bigvee\{x\in L_{i}:N\subseteq\mathfrak{c}(x)\}\right)^{\bullet}=0\text{ by Theorem \ref{ijnd}}\\
			%&\Leftrightarrow
			%\cl_{i}(N)\text{ is }(i,j)\text{-nowhere dense}\text{ by Theorem \ref{ijnd}}\\%\quad\text{by Proposition \ref{ijnd}}\\
			%&\Leftrightarrow
			%\bigvee\{x\in L_{i}:N\subseteq\mathfrak{c}(x)\}\quad\text{is } j\text{-dense}\\
			%&\Leftrightarrow
			%\left(\bigvee\{x\in L_{i}:N\subseteq\mathfrak{c}(x)\}\right)^{\bullet}=0\\
			&\Leftrightarrow
			\left(\bigvee\{x\in L_{i}:N\subseteq\mathfrak{c}(x)\}\right)^{*}=0\text{ since }(L,L_{1},L_{2})\text{ is balanced}\\
			& \Leftrightarrow
			\mathfrak{o}\left(\bigvee\{x\in L_{i}:N\subseteq\mathfrak{c}(x)\}\right)\ \text{is dense}\\
			& \Leftrightarrow
			\mathfrak{B}L\subseteq\mathfrak{o}\left(\bigvee\{x\in L_{i}:N\subseteq\mathfrak{c}(x)\}\right)\\
			&\Leftrightarrow
			\mathfrak{B}L\cap\mathfrak{c}\left(\bigvee\{x\in L_{i}:N\subseteq\mathfrak{c}(x)\}\right)=\mathsf{O}\\
			%&\Leftrightarrow
			%\mathfrak{B}L\cap\cl_{i}\big(\mathfrak{c}\left(\bigvee\{x\in L_{i}:N\subseteq\mathfrak{c}(x)\}\big)\right)=\mathsf{O}\\
			&\Leftrightarrow
			\mathfrak{B}L\cap\cl_{i}(N)=\mathsf{O}
		\end{align*}which proves the result.
	\end{proof}
The above result implies the folowing.
\begin{corollary}\label{ijndbalanced}
	In a balanced bilocale $(L,L_{1},L_{2})$, a sublocale $N$ of $L$ is $(i,j)$-nowhere dense iff $\cl_{i}(N)$ is nowhere dense.
\end{corollary}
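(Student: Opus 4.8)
The plan is to read the corollary directly off Proposition \ref{smallest dense in bilocale wrt to Li} together with the locale-theoretic definition of a nowhere dense sublocale recalled in the preliminaries. There a sublocale $T$ of $L$ is nowhere dense precisely when $T\cap\mathfrak{B}L=\mathsf{O}$. Hence the only thing to check is that applying this criterion to the particular sublocale $T=\cl_{i}(N)$ reproduces exactly the condition supplied by the proposition.

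First I would record that $\cl_{i}(N)$ is, by its very definition, the closed sublocale $\mathfrak{c}\big(\bigvee\{a\in L_{i}:N\subseteq\mathfrak{c}(a)\}\big)$, and so it equals its own closure. Consequently the nowhere-density criterion applied to $\cl_{i}(N)$ reads simply $\cl_{i}(N)\cap\mathfrak{B}L=\mathsf{O}$, with no ambiguity between evaluating the criterion on $\cl_{i}(N)$ or on $\overline{\cl_{i}(N)}$. Next I would invoke Proposition \ref{smallest dense in bilocale wrt to Li}, which, for a balanced bilocale, asserts that $N$ is $(i,j)$-nowhere dense if and only if $\mathfrak{B}L\cap\cl_{i}(N)=\mathsf{O}$. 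Since intersection of sublocales is commutative, this is the identical equation, and the desired equivalence follows at once.

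I expect no real obstacle here, as the mathematical content is already carried by Proposition \ref{smallest dense in bilocale wrt to Li} (whose proof in turn leans on the balanced hypothesis, giving $a^{*}=a^{\bullet}$ for $a\in L_{i}$, and on Theorem \ref{ijnd}). The single point deserving a moment's care is that the nowhere-density criterion is to be applied to the closed sublocale $\cl_{i}(N)$, and not to $N$ itself, so that the match with the proposition is literal rather than merely formal.
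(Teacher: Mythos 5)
Your proposal is correct and matches the paper's own treatment: the paper states this corollary as an immediate consequence of Proposition \ref{smallest dense in bilocale wrt to Li}, exactly as you do, since the paper's definition of a nowhere dense sublocale $S$ is precisely $S\cap\mathfrak{B}L=\mathsf{O}$, so applying it to $\cl_{i}(N)$ gives the equivalence verbatim. Your additional remark that $\cl_{i}(N)$ is closed (hence equal to its own closure) is a harmless extra precaution, not needed given the paper's chosen definition.
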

	\section{$(i,j)$-remote sublocales}
	The aim of this section is to introduce new classes of sublocales called $(i,j)$-remote sublocales and weakly $(i,j)$-remote sublocales and study their properties.
	\begin{definition}\label{remotesubbilocale} 
		Let $(L,L_{1},L_{2})$ be a bilocale. A sublocale $S$ of $L$ is \textit{$(i,j)$-remote (resp. weakly $(i,j)$-remote)} if $N\cap S=\mathsf{O}$ for every $(i,j)$-nowhere dense (resp. clopen $(i,j)$-nowhere dense) sublocale $N$.
		%	\item \textit{Almost $(i,j)$-remote} if $\cl_{i}(N)\cap S=\mathsf{O}$ for every almost $(i,j)$-nowhere dense sublocale $N$.
		%\item \textit{Strongly $(i,j)$-remote} if $\cl_{i}(N)\cap S=\mathsf{O}$ for every strongly $(i,j)$-nowhere dense sublocale $N$.
		%	\end{enumerate} 
	\end{definition}
	%\begin{definition} Let $(L,L_{1},L_{2})$ be a bilocale. A sublocale $S\subseteq L$ is $(i,j)$-remote if \\$\cl_{i}(N)\cap S=\mathsf{O}$ for every $(i,j)$-nowhere dense sublocale $N$.
	%\end{definition}
	
	%In bispaces, we shall say that a subset $A$ of a bispace $(X,\tau_{1},\tau_{2})$ is \textit{$(\tau_{i},\tau_{j})$-remote} in case $\cl_{\tau_{i}}(N)\cap A=\emptyset$ for all $(\tau_{i},\tau_{j})$-nowhere dense subset $N$.
	
	We consider some examples.
	
	\begin{example}\label{exaBLij}
		
		(1) The void sublocale is both $(i,j)$-remote and weakly $(i,j)$-remote.
		
		(2) If $S$ is an $(i,j)$-remote (resp. weakly $(i,j)$-remote) sublocale of $L$, then every sublocale of $L$ contained in $S$ is $(i,j)$-remote (resp. weakly $(i,j)$-remote).
		
		(3) In a \textit{symmetric biframe}, which was defined in \cite{BB} as a biframe $(L,L_{1},L_{2})$ in which $L=L_{1}=L_{2}$ (we shall only write $L$ for a symmetric bilocale whose total part is $L$) we have:
		
		\begin{itemize}
			\item[a)] $(i,j)$-remoteness coincides with remoteness. As a result, the sublocale $\mathfrak{B}L$ is $(i,j)$-remote.
			\item[b)] Every sublocale is weakly $(i,j)$-remote. This follows since $\mathsf{O}$ is the only clopen $(i,j)$-nowhere dense sublocale of the locale $L$.
		\end{itemize} 
		
		(4) Every $(i,j)$-remote sublocale is weakly $(i,j)$-remote, but the converse is not always true: Since, by \cite{N}, Boolean locales are precisely those whose sublocales are remote, it follows from (3)(b) that in a symmetric bilocale $L$ where $L$ is non-Boolean, not every weakly $(i,j)$-remote sublocale is $(i,j)$-remote.
		
		(5) For any symmetric bilocale $L$, $\{\mathfrak b(x^*) : x\in L\}$ is the collection of all $(i,j)$-remote sublocales of $L$: Let $A$ be an $(i,j)$-remote sublocale of $L$. Since $\mathfrak{B}L$ is the largest remote sublocale and remoteness coincides with $(i,j)$-remoteness, $A\subseteq \mathfrak BL$. Because every sublocale of a Boolean locale is Boolean, we have that $A$ is a Boolean sublocale of $L$ and so there is an $a\in L$ such that $A=\mathfrak b(a)$. Since $a\in\mathfrak b(a)$, it follows that $a\in\mathfrak BL$, and so $a=a^{**}$. 
			
			On the other hand, let $x\in L$ and consider the sublocale $\mathfrak b(x^*)$ of $L$. Since $x^*$ belongs to $\mathfrak BL$ and $\mathfrak b(x^*)$ is the smallest sublocale containing $x^*$, we have $\mathfrak b(x^*)\subseteq \mathfrak BL$, and so $\mathfrak b(x^*)$ is remote and hence an  $(i,j)$-remote sublocale of $L$. 
			
				(6) In a balanced bilocale, every remote sublocale is $(i,j)$-remote (In particular, the Booleanization is $(i,j)$-remote): Let $(L,L_{1},L_{2})$ be a balanced bilocale, $S$ a remote sublocale of $L$ and choose an $(i,j)$-nowhere dense sublocale $N$. By Corollary \ref{ijndbalanced}, $\cl_{i}(N)$ is nowhere dense so that $S\cap \cl_{i}(N)=\mathsf{O}$, making $S\cap N=\mathsf{O}$ as required. 
	\end{example}
	
	%\begin{remark}
		%Contrary to the fact mentioned in Example \ref{exaBLij}(3)(b) that in a symmetric bilocale $L$, $\mathsf{O}$ is the only clopen $(i,j)$-nowhere dense sublocale of $L$, in any bilocale $(L,L_{1},L_{2})$, $\mathsf{O}$ need not be the only clopen $(i,j)$-nowhere dense sublocale of $L$. For instance, $\mathfrak{c}(\{b,c\})$ in Example \ref{BLnotnd} is a non-void clopen $(1,2)$-nowhere dense sublocale of $\tau$.
%	\end{remark}
	In the following main result of this section, we give a characterization of $(i,j)$-remote and weakly $(i,j)$-remote sublocales. We only prove results about $(i,j)$-remote sublocales. Results about weakly $(i,j)$-remote sublocales follow a similar sketch.
	\begin{theorem}\label{remote subbilocale}
		Let $(L,L_{1},L_{2})$ be a bilocale and $S\in \mathcal{S}(L)$. The following statements are equivalent.
		\begin{enumerate} 
			\item $S$ is $(i,j)$-remote (resp. weakly $(i,j)$-remote).
			\item $S\cap \cl_{i}(N)=\mathsf{O}$ for every $(i,j)$-nowhere dense (resp. clopen $(i,j)$-nowhere dense) $N$.
			\item $S\cap\overline{N}=\mathsf{O}$ for every $(i,j)$-nowhere dense (resp. clopen $(i,j)$-nowhere dense) $N$.
			\item $S\cap\mathfrak{c}(x)=\mathsf{O}$ for all $j$-dense (resp. complemented $j$-dense) $x\in L_{i}$.
			\item $S\subseteq \mathfrak{o}(a)$ for every $j$-dense (resp. complemented $j$-dense) $a\in L_{i}$.
			%\item $A\subseteq \mathfrak{c}(b^{\bullet})$ implies $A\cap \mathfrak{c}(b)=\mathsf{O}$ for every $b\in L_{i}$.
			\item $\nu_{S}(d)=1$ for every $j$-dense (resp. complemented $j$-dense) $d\in L_{i}$.
		\end{enumerate}
		%Then $1\Leftrightarrow2\Leftrightarrow3\Leftrightarrow4\Leftrightarrow5\Leftrightarrow6$. 
	\end{theorem}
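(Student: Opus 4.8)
The plan is to establish the cycle $(1)\Rightarrow(2)\Rightarrow(3)\Rightarrow(1)$ first, and then to attach the ``elementwise'' conditions $(4)$, $(5)$, $(6)$ to it. Throughout I would treat only the $(i,j)$-remote case; the weakly $(i,j)$-remote case runs identically once every occurrence of ``$(i,j)$-nowhere dense'' is replaced by ``clopen $(i,j)$-nowhere dense'', every ``$j$-dense $x\in L_i$'' by ``complemented $j$-dense $x\in L_i$'', and Theorem \ref{ijnd} is replaced by Proposition \ref{ndbilocaleclopen} (with Corollary \ref{nd elements in bilocale} replaced by its clopen analogue).

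For $(1)\Rightarrow(2)$ I would take an $(i,j)$-nowhere dense $N$; by Theorem \ref{ijnd}$(5)$ its $i$-closure $\cl_i(N)$ is again $(i,j)$-nowhere dense, so $(i,j)$-remoteness of $S$ forces $S\cap\cl_i(N)=\mathsf O$. For $(2)\Rightarrow(3)$ I would use the inclusion $\overline N\subseteq\cl_i(N)$ from Proposition \ref{int}$(1)$, which gives $S\cap\overline N\subseteq S\cap\cl_i(N)=\mathsf O$. For $(3)\Rightarrow(1)$ I would use $N\subseteq\overline N$ (again Proposition \ref{int}$(1)$), so that $S\cap N\subseteq S\cap\overline N=\mathsf O$ for every $(i,j)$-nowhere dense $N$, which is exactly $(i,j)$-remoteness. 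These three implications are immediate consequences of the closure results already in hand.

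Next I would connect the closed-sublocale condition $(4)$. For $(1)\Rightarrow(4)$, given a $j$-dense $x\in L_i$, Corollary \ref{nd elements in bilocale} tells me $\mathfrak c(x)$ is $(i,j)$-nowhere dense, so remoteness of $S$ gives $S\cap\mathfrak c(x)=\mathsf O$. The crux is the reverse direction $(4)\Rightarrow(1)$: here I would take an arbitrary $(i,j)$-nowhere dense $N$ and set $x=\bigvee\{y\in L_i:N\subseteq\mathfrak c(y)\}$, which lies in $L_i$ because $L_i$ is a subframe and hence closed under arbitrary joins. By the second description of the bilocale closure, $\cl_i(N)=\mathfrak c(x)$, and by Theorem \ref{ijnd}$(4)$ this $x$ is $j$-dense. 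Hence $N\subseteq\cl_i(N)=\mathfrak c(x)$ and $(4)$ yields $S\cap N\subseteq S\cap\mathfrak c(x)=\mathsf O$. This reduction of an arbitrary $(i,j)$-nowhere dense sublocale to a closed one induced by a $j$-dense element is the step I expect to carry the real weight of the argument, since it is exactly what lets one check remoteness against the manageable family $\{\mathfrak c(x):x\in L_i\ j\text{-dense}\}$ rather than against all $(i,j)$-nowhere dense sublocales.

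Finally I would close the loop with the two order-theoretic equivalences. For $(4)\Leftrightarrow(5)$ I would invoke that $\mathfrak o(x)$ and $\mathfrak c(x)$ are complementary sublocales, so that $\mathfrak o(x)=L\smallsetminus\mathfrak c(x)$ is the largest sublocale missing $\mathfrak c(x)$; thus $S\cap\mathfrak c(x)=\mathsf O$ holds precisely when $S\subseteq\mathfrak o(x)$. For $(5)\Leftrightarrow(6)$ I would quote the preliminary fact that $S\subseteq\mathfrak o(a)$ if and only if $\nu_S(a)=1$. Since all three conditions are quantified uniformly ``for every $j$-dense $x\in L_i$'', these equivalences pass through the quantifier unchanged, completing the chain of equivalences.
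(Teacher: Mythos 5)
Your proof is correct and uses essentially the same machinery as the paper's: Theorem \ref{ijnd} to pass between $N$, $\cl_{i}(N)$ and $\overline{N}$, Corollary \ref{nd elements in bilocale} to link $j$-dense elements with closed $(i,j)$-nowhere dense sublocales, and the reduction of an arbitrary $(i,j)$-nowhere dense $N$ to $\mathfrak{c}\bigl(\bigvee\{a\in L_{i}:N\subseteq\mathfrak{c}(a)\}\bigr)=\cl_{i}(N)$ as the load-bearing step. The only difference is organizational: the paper runs one cycle $(1)\Leftrightarrow(2)\Leftrightarrow(3)\Rightarrow(4)\Leftrightarrow(5)\Rightarrow(6)\Rightarrow(1)$ and places that key reduction in $(6)\Rightarrow(1)$, whereas you place it in $(4)\Rightarrow(1)$ — an immaterial rearrangement since $(4)$, $(5)$, $(6)$ are linked by the same elementary facts in both proofs.
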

	\begin{proof}
		
		$(1)\Leftrightarrow(2)\Leftrightarrow(3)$: Follows since a sublocale $N$ of $L$ is $(i,j)$-nowhere dense if and only if $\cl_{i}(N)$ is $(i,j)$-nowhere dense if and only if $\overline{N}$ is $(i,j)$-nowhere dense, by Theorem \ref{ijnd}.

		$(3)\Rightarrow(4)$: Let $x\in L_{i}$ be $j$-dense. It follows that $\mathfrak{c}(x)$ is $(i,j)$-nowhere dense. By (3), $$\mathsf{O}=S\cap \overline{\mathfrak{c}(x)}=S\cap\mathfrak{c}(x).$$

		$(4)\Leftrightarrow(5)$: Follows since $S\cap\mathfrak{c}(y)=\mathsf{O}$ if and only if $S\subseteq \mathfrak{o}(y)$ for all $S\in\mathcal{S}(L)$ and every $y\in L$.

		$(5)\Rightarrow(6)$: Let $d\in L_{i}$ be $j$-dense. By (5), $S\subseteq \mathfrak{o}(d)$. Since $\nu_{B}(a)=1$  if and only if $B\subseteq\mathfrak{o}(a)$ for every $a\in L,B\in\mathcal{S}(L)$, $\nu_{S}(d)=1$.
		
		$(6)\Rightarrow(1)$: Let $N\in\mathcal{S}(L)$ be $(i,j)$-nowhere dense. It follows from Theorem \ref{ijnd} that $\bigvee\{a\in L_{i}:N\subseteq \mathfrak{c}(a)\}$ is a $j$-dense element of $L_{i}$. By hypothesis, $\nu_{S}(\bigvee\{a\in L_{i}:N\subseteq \mathfrak{c}(a)\})=1$ which implies that $S\subseteq \mathfrak{o}\left(\bigvee\{a\in L_{i}:N\subseteq \mathfrak{c}(a)\}\right)$. Therefore $$\mathsf{O}=S\cap\mathfrak{c}\left(\bigvee\{a\in L_{i}:N\subseteq \mathfrak{c}(a)\}\right)=S\cap \cl_{i}(N)\supseteq S\cap N$$which proves the implication.
	\end{proof}

\begin{proposition}\label{largestijremote}
	Let $(L,L_{1},L_{2})$ be a bilocale. Then there is the largest $(i,j)$-remote sublocale of $L$.
\end{proposition}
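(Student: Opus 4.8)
The plan is to exhibit an explicit candidate and then verify its maximality directly from the characterization in Theorem \ref{remote subbilocale}. Put
$$R=\bigcap\{\mathfrak{o}(a):a\in L_{i}\text{ is }j\text{-dense}\}.$$
Since arbitrary intersections of sublocales are again sublocales, $R\in\mathcal{S}(L)$, and the indexing family is nonempty because $1\in L_{i}$ is $j$-dense (indeed $1^{\bullet}=\bigvee\{x\in L_{j}:x=0\}=0$), so that $\mathfrak{o}(1)=L$ occurs as one of the terms.

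First I would check that $R$ is $(i,j)$-remote. By construction $R\subseteq\mathfrak{o}(a)$ for every $j$-dense $a\in L_{i}$, which is exactly condition $(5)$ of Theorem \ref{remote subbilocale}; hence $R$ is $(i,j)$-remote. Next, for maximality, let $S$ be any $(i,j)$-remote sublocale of $L$. By the implication $(1)\Rightarrow(5)$ of Theorem \ref{remote subbilocale}, $S\subseteq\mathfrak{o}(a)$ for every $j$-dense $a\in L_{i}$, and therefore $S$ is contained in the intersection $R$. This shows that $R$ is the largest $(i,j)$-remote sublocale.

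I expect there to be essentially no serious obstacle here: the whole point is that characterization $(5)$ presents the $(i,j)$-remote sublocales as precisely those lying below every $\mathfrak{o}(a)$ with $a\in L_{i}$ being $j$-dense, so the collection of $(i,j)$-remote sublocales is the principal down-set determined by $R$ in $\mathcal{S}(L)$, which automatically has a top element. An equivalent route would be to take $R$ to be the join $\bigvee$ of all $(i,j)$-remote sublocales and note that, since each summand lies below every such $\mathfrak{o}(a)$ and $\mathfrak{o}(a)$ is a sublocale, the join does too; condition $(5)$ then makes this join $(i,j)$-remote, hence the largest. The only minor care needed is to confirm that the defining family is nonempty, which the element $1$ supplies.
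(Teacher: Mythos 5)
Your proof is correct, but it takes a genuinely different route from the paper's. The paper forms the join of \emph{all} $(i,j)$-remote sublocales and verifies that this join is itself $(i,j)$-remote via characterization (4) of Theorem \ref{remote subbilocale}: for any $j$-dense $x\in L_{i}$, meeting with the (complemented) closed sublocale $\mathfrak{c}(x)$ distributes over the join, so $\mathfrak{c}(x)\cap\bigvee_{\alpha}S_{\alpha}=\bigvee_{\alpha}\left(\mathfrak{c}(x)\cap S_{\alpha}\right)=\mathsf{O}$. You instead exhibit the explicit candidate $R=\bigcap\{\mathfrak{o}(a):a\in L_{i}\text{ is }j\text{-dense}\}$ and use characterization (5): $R$ lies below every such $\mathfrak{o}(a)$, hence is $(i,j)$-remote, while every $(i,j)$-remote sublocale lies below every such $\mathfrak{o}(a)$ and hence below $R$. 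Both arguments lean on Theorem \ref{remote subbilocale}, but yours avoids any distributive law in $\mathcal{S}(L)$ (needing only that intersections of sublocales are sublocales), and it yields as a bonus a concrete description of the largest $(i,j)$-remote sublocale as an intersection of open sublocales. What the paper's version buys instead is closure of the $(i,j)$-remote sublocales under arbitrary joins, which is precisely what the Observation following the proposition exploits to conclude that this system is a coframe; note that your sketched ``equivalent route'' recovers that closure property too, and even more simply, since a join of sublocales contained in the sublocale $\mathfrak{o}(a)$ is automatically contained in $\mathfrak{o}(a)$. Your nonemptiness check is harmless but not needed: the empty intersection in $\mathcal{S}(L)$ is $L$, and the argument would go through vacuously.
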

\begin{proof}
	Consider a collection $\{S_{i}:i\in I\}$ of $(i,j)$-remote sublocales of $L$. For any $j$-dense $x\in L_{i}$, we have that 
		$$\mathfrak{c}(x)\cap \bigvee_{i\in I}\{S_{i}:i\in I\}=\bigvee_{i\in I}\{\mathfrak{c}(x)\cap S_{i}:i\in I\}=\bigvee_{i\in I}\{\mathsf{O}\}=\mathsf{O}.$$
\end{proof}
\begin{obs}
	(1) The above result also holds for weakly $(i,j)$-remote sublocales.
	
	(2) %$\mathcal{S}_{\text{rem}}(L)$, partially ordered by inclusion, is a coframe.  In this regard, recall that a subset $S$ of a coframe $H$ is called a \textit{subcolocale} of $H$ if it closed under joins and if for all $s\in S$ and $x\in H$, $s\smallsetminus x\in S$, where $$s\smallsetminus x=\bigwedge\{a\in H:s\leq a\vee x\}.$$
	Since for all sublocales $S$ and $T$, the sublocale $S\smallsetminus T$ is contained in $S$, and since a sublocale smaller than an $(i,j)$-remote (resp. weakly $(i,j)$-remote) sublocale is $(i,j)$-remote (resp. weakly $(i,j)$-remote), we deduce from (1) above and Proposition \ref{largestijremote} that the system of $(i,j)$-remote (resp. weakly $(i,j)$-remote) sublocales, partially ordered by inclusion, is a coframe.
\end{obs}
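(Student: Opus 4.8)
The plan is to handle the two parts separately, with part~(2) carrying the real content.

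For part~(1), I would simply re-run the computation in the proof of Proposition~\ref{largestijremote}, replacing ``$j$-dense'' throughout by ``complemented $j$-dense''. Given a family $\{S_{k}:k\in I\}$ of weakly $(i,j)$-remote sublocales and a complemented $j$-dense $x\in L_{i}$, coframe distributivity in $\mathcal{S}(L)$ gives $\mathfrak{c}(x)\cap\bigvee_{k}S_{k}=\bigvee_{k}(\mathfrak{c}(x)\cap S_{k})=\mathsf{O}$, since each $S_{k}\cap\mathfrak{c}(x)=\mathsf{O}$ by the ``resp.'' clause of Theorem~\ref{remote subbilocale}(4). Hence, again by Theorem~\ref{remote subbilocale}(4), $\bigvee_{k}S_{k}$ is weakly $(i,j)$-remote, so the join of \emph{all} weakly $(i,j)$-remote sublocales is the largest one.

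For part~(2), write $\mathcal{R}$ for the poset of $(i,j)$-remote sublocales of $L$ (the weakly case being identical, with part~(1) used in place of Proposition~\ref{largestijremote}). The two structural facts I would isolate are: (a) by Example~\ref{exaBLij}(2), $\mathcal{R}$ is a down-set in the coframe $\mathcal{S}(L)$ --- this is precisely what the remark ``$S\smallsetminus T\subseteq S$'' records, since it says the co-Heyting difference of $\mathcal{S}(L)$ never leaves $\mathcal{R}$; and (b) by Proposition~\ref{largestijremote}, $\mathcal{R}$ is closed under arbitrary joins formed in $\mathcal{S}(L)$, so $R:=\bigvee\mathcal{R}$ is itself $(i,j)$-remote and is the top element of $\mathcal{R}$. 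Combining (a) and (b), every $S\subseteq R$ lies in $\mathcal{R}$ (being contained in the remote sublocale $R$), so $\mathcal{R}$ is exactly the principal down-set $\downarrow R=[\mathsf{O},R]$ of $\mathcal{S}(L)$.

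It then remains to invoke the standard fact (see \cite{PP1}) that $\mathcal{S}(L)$ is a coframe, together with the observation that a closed interval $[\mathsf{O},R]$ of any coframe is again a coframe: arbitrary joins and \emph{non-empty} meets of families lying in $[\mathsf{O},R]$ coincide with those computed in $\mathcal{S}(L)$, and the coframe law $A\vee\bigwedge_{k}B_{k}=\bigwedge_{k}(A\vee B_{k})$ is inherited verbatim. The only point needing a little care --- and the closest thing to an obstacle --- is the bookkeeping of the top element: the empty meet in $\mathcal{R}$ is $R$, not the whole locale $L$, so the distributive law must be verified on families rather than read off from the ambient meets of $\mathcal{S}(L)$; this is immediate once one checks that joins and non-empty meets of $\mathcal{R}$ agree with those of $\mathcal{S}(L)$. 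The weakly $(i,j)$-remote case runs word for word, with part~(1) supplying the join-closure.
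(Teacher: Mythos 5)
Your proof is correct, and it rests on exactly the two facts that the paper's own (very compressed) justification rests on: join-closure of the system inside $\mathcal{S}(L)$ (Proposition \ref{largestijremote}, which you correctly re-run for the weak case using complemented $j$-dense elements and Theorem \ref{remote subbilocale}) and down-closure (Example \ref{exaBLij}(2)). Where you genuinely diverge is the last step. The paper's clause ``$S\smallsetminus T\subseteq S$'' is not merely a restatement of down-closure: combined with down-closure it shows that the co-Heyting difference of $\mathcal{S}(L)$ never leaves the system, so the system is a complete lattice (joins exist by Proposition \ref{largestijremote} and part (1)) equipped with a difference operation that remains left adjoint to join, and the coframe law then follows because $T\vee(-)$, having a left adjoint, preserves arbitrary meets. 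You bypass the difference operation altogether, identifying the system with the principal down-set $[\mathsf{O},R]$, where $R$ is the largest $(i,j)$-remote (resp. weakly $(i,j)$-remote) sublocale, and verifying by hand that an interval $[\mathsf{O},R]$ of a coframe is a coframe --- including the one delicate point, that the empty meet in the interval is $R$ rather than $L$. Your route is more elementary and self-contained; the paper's is shorter if one already knows that complete lattices carrying a co-Heyting difference are precisely the coframes. One terminological slip worth fixing: the identity $\mathfrak{c}(x)\cap\bigvee_{k}S_{k}=\bigvee_{k}\bigl(\mathfrak{c}(x)\cap S_{k}\bigr)$ used in your part (1) is not the coframe law of $\mathcal{S}(L)$ (that law lets joins distribute over arbitrary meets); it holds because the closed sublocale $\mathfrak{c}(x)$ is complemented, hence linear, in $\mathcal{S}(L)$ --- the same fact the paper invokes as ``complemented elements are colinear'' in the proof of Proposition \ref{remclosed}.
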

	Next, we discuss $(i,j)$-remoteness and weakly $(i,j)$-remoteness of closed sublocales. 
	
	\begin{proposition}\label{bidense}
		Let $(L,L_{1},L_{2})$ be a bilocale and $a\in L$. Then $\mathfrak{c}(a)$ is  $(i,j)$-remote (resp. weakly $(i,j)$-remote) iff $a\vee x=1$ for every $j$-dense (resp. complemented $j$-dense) $x\in L_{i}$.
	\end{proposition}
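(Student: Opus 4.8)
The plan is to reduce the statement to condition (4) of Theorem \ref{remote subbilocale} and then apply two elementary facts about closed sublocales: that the intersection of two closed sublocales is computed by $\mathfrak{c}(a)\cap\mathfrak{c}(x)=\mathfrak{c}(a\vee x)$, and that a closed sublocale $\mathfrak{c}(c)$ reduces to the one-point sublocale $\mathsf{O}=\{1\}$ exactly when $c=1$.

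First I would take $S=\mathfrak{c}(a)$ in Theorem \ref{remote subbilocale}. By the equivalence $(1)\Leftrightarrow(4)$ there, $\mathfrak{c}(a)$ is $(i,j)$-remote if and only if $\mathfrak{c}(a)\cap\mathfrak{c}(x)=\mathsf{O}$ for every $j$-dense $x\in L_{i}$. This already isolates the single quantified condition I must analyse.

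Next I would compute the intersection. Since the meet of two closed sublocales is the closed sublocale induced by the join of the inducing elements, $\mathfrak{c}(a)\cap\mathfrak{c}(x)=\mathfrak{c}(a\vee x)$, and $\mathfrak{c}(c)=\{y\in L:c\leq y\}$ collapses to $\mathsf{O}=\{1\}$ precisely when $c=1$. Chaining these gives $\mathfrak{c}(a)\cap\mathfrak{c}(x)=\mathsf{O}\Leftrightarrow a\vee x=1$, and quantifying over all $j$-dense $x\in L_{i}$ yields the stated equivalence for $(i,j)$-remoteness.

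Finally, the weakly $(i,j)$-remote case runs identically, invoking the parenthetical ``weakly'' branch of Theorem \ref{remote subbilocale} and restricting the quantifier to complemented $j$-dense elements $x\in L_{i}$; the computation $\mathfrak{c}(a)\cap\mathfrak{c}(x)=\mathfrak{c}(a\vee x)=\mathsf{O}\Leftrightarrow a\vee x=1$ is unchanged. There is no genuine obstacle here: the only point requiring care is selecting the correct branch (remote versus weakly remote) of Theorem \ref{remote subbilocale}, so that the density hypothesis on $x$ matches the side of the equivalence being proved.
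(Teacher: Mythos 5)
Your proposal is correct and follows essentially the same route as the paper's own proof: both reduce the statement to Theorem \ref{remote subbilocale} (the paper phrases the condition via $\mathfrak{c}(a)\cap \cl_{i}(\mathfrak{c}(x))=\mathsf{O}$ and simplifies using $\cl_{i}(\mathfrak{c}(x))=\mathfrak{c}(x)$, which is exactly your condition (4)), and then conclude via $\mathfrak{c}(a)\cap\mathfrak{c}(x)=\mathfrak{c}(a\vee x)$ and the fact that $\mathfrak{c}(a\vee x)=\mathsf{O}$ iff $a\vee x=1$. The weakly $(i,j)$-remote case is handled identically in both arguments by restricting to complemented $j$-dense elements.
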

	\begin{proof}
		For each $j$-dense $x\in L_{i}$, Theorem \ref{remote subbilocale} implies that  \begin{align*}
			\mathfrak{c}(a)\cap \cl_{i}(\mathfrak{c}(x))=\mathsf{O}
			&\quad \Leftrightarrow\quad
			\mathfrak{c}(a)\cap\mathfrak{c}(x)=\mathsf{O}\\
			&\quad \Leftrightarrow\quad
			\mathfrak{c}(a\vee x)=\mathsf{O}\\
			&\quad \Leftrightarrow\quad
			a\vee x=1
		\end{align*}which proves the result.
	\end{proof}
	
	%\section{$(i,j)$-Remoteness in other settings}
	
	We close this section with a short discussion of remoteness in the categories of bitopological spaces and normed lattices. 
	
	We start by introducing $(\tau_{i},\tau_{j})$-remote subsets of bispaces.
	
	\begin{definition}
		Let $(X,\tau_{1},\tau_{2})$ be a bispace and $A\subseteq X$. Then $A$ is $(\tau_{i},\tau_{j})$-remote if $A\cap F=\emptyset$ for every $(\tau_{i},\tau_{j})$-nowhere dense $F\subseteq X$.
	\end{definition}
	
	We show that a subset $A$ of a $\sup$-T$_{D}$ bispace $(X,\tau_{1},\tau_{2})$ is $(\tau_{i},\tau_{j})$-remote (resp. weakly $(\tau_{i},\tau_{j})$-remote) precisely when $\widetilde{A}$ is $(i,j)$-remote (resp. weakly $(\tau_{i},\tau_{j})$-remote). We shall need the following result taken from \cite[Observation 2.1.17.]{N1}:
	
	\begin{quote}
		\emph{For a T$_{D}$-space $X$ and subsets $A$ and $B$ of $X$ where $A$ is either closed or open, $A\cap B=\emptyset$ if and only if $\widetilde{A}\cap\widetilde{B}=\mathsf{O}$. }
	\end{quote}
	
	\begin{proposition}\label{remotebilocaleprop}
		Let $(X,\tau_{1},\tau_{2})$ be a $\sup$-T$_{D}$ bitopological space and $A\subseteq X$. Then $\widetilde{A}$ is $(i,j)$-remote iff $A$ is $(\tau_{i},\tau_{j})$-remote, where $i\neq j\in\{1,2\}$.
	\end{proposition}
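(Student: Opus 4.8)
The plan is to prove both implications by unwinding the definitions of $(\tau_i,\tau_j)$-remoteness and $(i,j)$-remoteness and transferring the missing condition across the assignment $A\mapsto\widetilde{A}$ using the two transfer tools already established in the excerpt, namely Proposition~\ref{ijndinduced} (which says $F$ is $(\tau_i,\tau_j)$-nowhere dense iff $\widetilde{F}$ is $(i,j)$-nowhere dense) and the quoted T$_D$-fact that for a closed or open $F$ and any $B$, $F\cap B=\emptyset$ iff $\widetilde{F}\cap\widetilde{B}=\mathsf{O}$. The key observation that lets me apply the latter is that in testing remoteness it suffices to run over \emph{closed} nowhere dense witnesses: by Theorem~\ref{ijnd} a sublocale is $(i,j)$-nowhere dense iff its $i$-closure (equivalently its closure $\overline{N}$) is, and dually by the definition of $(\tau_i,\tau_j)$-nowhere density the set $\cl_{\tau_i}(F)$ is again $(\tau_i,\tau_j)$-nowhere dense and contains $F$, so $A$ misses every $(\tau_i,\tau_j)$-nowhere dense set iff it misses every \emph{closed} one. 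This reduction is what makes the T$_D$-fact applicable, since that fact requires one of the two sets to be closed or open.

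First I would prove the forward direction. Assume $\widetilde{A}$ is $(i,j)$-remote, and let $F\subseteq X$ be $(\tau_i,\tau_j)$-nowhere dense; by the reduction above I may take $F$ closed. Then by Proposition~\ref{ijndinduced} the sublocale $\widetilde{F}$ is $(i,j)$-nowhere dense, so $(i,j)$-remoteness of $\widetilde{A}$ gives $\widetilde{A}\cap\widetilde{F}=\mathsf{O}$. Since $F$ is closed, the T$_D$-fact converts this back to $A\cap F=\emptyset$, proving $A$ is $(\tau_i,\tau_j)$-remote.

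For the converse, assume $A$ is $(\tau_i,\tau_j)$-remote. By Theorem~\ref{remote subbilocale}(2) it is enough to show $\widetilde{A}\cap\cl_i(\widetilde{N})=\mathsf{O}$ for every $(i,j)$-nowhere dense sublocale $N$; equivalently, using the $\sup$-T$_D$ machinery, I would produce for each such $N$ a \emph{closed} $(\tau_i,\tau_j)$-nowhere dense set $F$ with $\cl_i(\widetilde{N})=\widetilde{F}$. The natural candidate is $F=\cl_{\tau_i}(\,\cdot\,)$ applied to the set inducing $N$, so that Lemma~\ref{biclo} yields $\widetilde{F}=\cl_i(\widetilde{N})$ and Proposition~\ref{ijndinduced} guarantees $F$ is $(\tau_i,\tau_j)$-nowhere dense; then $(\tau_i,\tau_j)$-remoteness of $A$ gives $A\cap F=\emptyset$, and the T$_D$-fact (with $F$ closed) upgrades this to $\widetilde{A}\cap\widetilde{F}=\mathsf{O}$, i.e. $\widetilde{A}\cap\cl_i(\widetilde{N})=\mathsf{O}$, which by Theorem~\ref{remote subbilocale} is exactly $(i,j)$-remoteness of $\widetilde{A}$.

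The main obstacle I anticipate is the bookkeeping in the converse: general sublocales of $\mathfrak{O}X$ are not all of the induced form $\widetilde{B}$, so I cannot directly write an arbitrary $(i,j)$-nowhere dense sublocale $N$ as $\widetilde{B}$ and then take a set-closure. The clean way around this is to avoid arbitrary $N$ altogether and instead test remoteness against closed witnesses only, invoking the equivalence of conditions (1)--(4) in Theorem~\ref{remote subbilocale}: it suffices to check $\widetilde{A}\cap\mathfrak{c}(x)=\mathsf{O}$ for every $j$-dense $x\in\tau_i$, and each such closed sublocale $\mathfrak{c}(x)$ \emph{is} of induced form, namely $\mathfrak{c}(x)=\widetilde{X\smallsetminus x}$ with $X\smallsetminus x$ closed, so the T$_D$-fact applies cleanly and Corollary~\ref{nd elements in bilocale} identifies exactly these closed sublocales as the $(i,j)$-nowhere dense ones coming from the complements of $j$-dense opens. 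Routing the argument through condition~(4) of Theorem~\ref{remote subbilocale} sidesteps the induced-form issue entirely.
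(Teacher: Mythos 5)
Your proof is correct, and it rests on the same transfer machinery as the paper's: Proposition~\ref{ijndinduced}, the quoted T$_D$ fact, and the reduction of remoteness to closed witnesses. The routes differ in the details, however. In the forward direction the paper applies Theorem~\ref{remote subbilocale}(2) and then converts $\cl_i(\widetilde{N})$ into $\widetilde{\cl_{\tau_i}(N)}$ via Lemma~\ref{biclo}, whereas you perform the closure replacement inside the space (passing from $F$ to $\cl_{\tau_i}(F)$, which is still $(\tau_i,\tau_j)$-nowhere dense and is $\tau$-closed) and never invoke Lemma~\ref{biclo} at all. In the converse direction the paper starts from an arbitrary $(i,j)$-nowhere dense sublocale $F$, uses Theorem~\ref{ijnd} to see that $\cl_i(F)$ is $(i,j)$-nowhere dense, and then uses that $\cl_i(F)$ is a closed sublocale of $\tau$ and hence of the form $\widetilde{B}$ for a $\tau$-closed $B$; you instead route through condition (4) of Theorem~\ref{remote subbilocale}, testing only against the canonical closed sublocales $\mathfrak{c}(x)$ for $j$-dense $x\in\tau_i$, which Corollary~\ref{nd elements in bilocale} identifies as $(i,j)$-nowhere dense and which are induced sublocales via $\mathfrak{c}(x)=\widetilde{X\smallsetminus x}$. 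Both resolutions of the obstacle you correctly flagged --- that not every sublocale of $\mathfrak{O}X$ is induced by a subset --- come down to the same fact, namely that closed sublocales of $\mathfrak{O}X$ are induced by closed subsets; yours is slightly more economical (no Lemma~\ref{biclo}, no need to manipulate $\cl_i(F)$ for arbitrary $F$), while the paper's argument works directly from the definition of $(i,j)$-remoteness and keeps the role of the bilocale closure explicit.
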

	\begin{proof}
		$(\Rightarrow):$ Let $N\subseteq X$ be $(\tau_{i},\tau_{j})$-nowhere dense. By Proposition \ref{ijndinduced}, $\widetilde{N}$ is $(i,j)$-nowhere dense in $\tau$. By hypothesis, $\widetilde{A}\cap\cl_{i}(\widetilde{N})=\mathsf{O}$. But $\cl_{i}(\widetilde{N})=\widetilde{\cl_{\tau_{i}}(N)}$, so $\widetilde{A}\cap\widetilde{\cl_{\tau_{i}}(N)}=\mathsf{O}$. Therefore $S_{(A\cap \cl_{\tau_{i}}(N))}=\mathsf{O}$ making $A\cap \cl_{\tau_{i}}(N)=\emptyset$.

		$(\Leftarrow):$ Let $F$ be an  $(i,j)$-nowhere dense sublocale of $\tau$. By Theorem \ref{ijnd}, $\cl_{i}(F)$ is $(i,j)$-nowhere dense in $\tau$. But $\cl_{i}(F)$ is $\tau$-closed, so there is a $\tau$-closed $B\subseteq X$ such that $\cl_{i}(F)=\widetilde{B}$. It follows from Proposition \ref{ijndinduced} that $B$ is $(\tau_{i},\tau_{j})$-nowhere dense. Therefore $B\cap A=\emptyset$. By \cite[Observation 2.1.17.]{N}, $$\mathsf{O}=\widetilde{B}\cap \widetilde{A}=\cl_{i}(F)\cap \widetilde{A}$$which implies that $F\cap \widetilde{A}=\mathsf{O}$ as required.
	\end{proof}
	We show that the equivalences (1), (2) and (5) of Theorem \ref{remote subbilocale} hold for bispaces, by reasoning as in the localic proof.
	\begin{theorem}\label{remote subset}
		Let $(X,\tau_{1},\tau_{2})$ be a bispace and $A\subseteq X$. The following statements are equivalent.
		\begin{enumerate} 
			\item $A$ is $(\tau_{i},\tau_{j})$-remote.
			\item $A\cap \cl_{\tau_{i}}(N)=\emptyset$ for every $(\tau_{i},\tau_{j})$-nowhere dense $N\subseteq X$.
			\item $A\subseteq U$ for every $\tau_{j}$-dense $U\in \tau_{i}$.
		\end{enumerate}
	\end{theorem}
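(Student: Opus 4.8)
The plan is to establish the cycle of implications $(1)\Rightarrow(2)\Rightarrow(3)\Rightarrow(1)$, following the same pattern as the localic proof of Theorem \ref{remote subbilocale}. The engine of the argument is the elementary point-set duality $\Int_{\tau_{j}}(B)=X\smallsetminus\cl_{\tau_{j}}(X\smallsetminus B)$, together with two facts that transcribe Theorem \ref{ijnd} and Corollary \ref{nd elements in bilocale} into the spatial setting. First, if $N$ is $(\tau_{i},\tau_{j})$-nowhere dense then so is $\cl_{\tau_{i}}(N)$, since $\Int_{\tau_{j}}(\cl_{\tau_{i}}(\cl_{\tau_{i}}(N)))=\Int_{\tau_{j}}(\cl_{\tau_{i}}(N))=\emptyset$ by idempotency of $\tau_{i}$-closure. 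Second, for $U\in\tau_{i}$ the complement $X\smallsetminus U$ is $(\tau_{i},\tau_{j})$-nowhere dense precisely when $U$ is $\tau_{j}$-dense, which follows from $\Int_{\tau_{j}}(\cl_{\tau_{i}}(X\smallsetminus U))=\Int_{\tau_{j}}(X\smallsetminus U)=X\smallsetminus\cl_{\tau_{j}}(U)$, using that $X\smallsetminus U$ is $\tau_{i}$-closed.

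For $(1)\Rightarrow(2)$, given a $(\tau_{i},\tau_{j})$-nowhere dense $N$, I would invoke the first fact to note that $\cl_{\tau_{i}}(N)$ is again $(\tau_{i},\tau_{j})$-nowhere dense, so remoteness of $A$ forces $A\cap\cl_{\tau_{i}}(N)=\emptyset$. For $(2)\Rightarrow(3)$, I would take a $\tau_{j}$-dense $U\in\tau_{i}$ and set $F:=X\smallsetminus U$; by the second fact $F$ is $(\tau_{i},\tau_{j})$-nowhere dense, and since $F$ is $\tau_{i}$-closed we have $\cl_{\tau_{i}}(F)=F$, whence $(2)$ yields $A\cap F=A\cap\cl_{\tau_{i}}(F)=\emptyset$, that is, $A\subseteq U$.

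For $(3)\Rightarrow(1)$, given a $(\tau_{i},\tau_{j})$-nowhere dense $N$, I would consider the $\tau_{i}$-open set $U:=X\smallsetminus\cl_{\tau_{i}}(N)$. The duality identity shows $\Int_{\tau_{j}}(\cl_{\tau_{i}}(N))=X\smallsetminus\cl_{\tau_{j}}(U)$, so the hypothesis $\Int_{\tau_{j}}(\cl_{\tau_{i}}(N))=\emptyset$ is exactly the assertion $\cl_{\tau_{j}}(U)=X$, i.e.\ that $U$ is $\tau_{j}$-dense. Then $(3)$ gives $A\subseteq U=X\smallsetminus\cl_{\tau_{i}}(N)$, so $A\cap N\subseteq A\cap\cl_{\tau_{i}}(N)=\emptyset$, establishing that $A$ is $(\tau_{i},\tau_{j})$-remote.

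I expect no serious obstacle, since the whole argument is the spatial shadow of the already-proved localic Theorem \ref{remote subbilocale}. The only point demanding care is the consistent bookkeeping of the interior--closure--complement duality: making sure each application of $\Int_{\tau_{j}}(B)=X\smallsetminus\cl_{\tau_{j}}(X\smallsetminus B)$ is carried out on the correct set, and that $\tau_{i}$-closed sets are recognised as fixed points of $\cl_{\tau_{i}}$ at each step.
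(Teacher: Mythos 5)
Your proof is correct and takes essentially the same approach as the paper, which omits the details and states only that the equivalences hold by ``reasoning as in the localic proof'' of Theorem \ref{remote subbilocale}; your cycle $(1)\Rightarrow(2)\Rightarrow(3)\Rightarrow(1)$ is precisely that reasoning transcribed to the spatial setting. The two facts you isolate (idempotency of $\cl_{\tau_{i}}$, and the correspondence between $\tau_{j}$-dense $\tau_{i}$-open sets and $(\tau_{i},\tau_{j})$-nowhere dense $\tau_{i}$-closed sets) are the correct spatial counterparts of Theorem \ref{ijnd} and Corollary \ref{nd elements in bilocale}, and each implication checks out.
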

	
	%\section{Remoteness in normed lattices}
	%We refer the reader to \cite{AFG} for notations used below. From the cited text, we recall the following.
	
	We move to normed lattices.. We refer the reader to \cite{AFG} for a theory of normed lattices used below. Recall that  for a linear space $E$, the pair $(E,q)$, where $q$ is a quasi-norm defined on $E$, is called the \textit{quasi-normed space}  associated to $E$, and the function $q^{-1}(a)=q(-a)$ defines a quasi-norm called a \textit{conjugate} of $q$ in $E$ and its induced quasi-uniformity $q^{-1}$ is the conjugate of $q$.

	For a normed lattice $(E,\|\;\|,\leq)$, we will consider the topologies deduced from the norm $\|\;\|$ and from the associated quasi-norm. A subset $U$ of $E$ is said to be \textit{open} (resp. dense) if it is open (resp. dense) for the norm. It is $q$-open (resp. $q$-dense) if it is open (resp. dense) for the associated quasi-norm. The topology deduced from $q^{-1}$ is coarser than that deduced from the norm.
	
	We introduce the following concept of remoteness in the category of normed lattices.
	\begin{definition}
		Let $(E,\|\;\|,\leq)$ be a normed lattice. Call $A\subseteq E$ \textit{remote} in case $A\subseteq U$ for every dense, open and decreasing $U\subseteq E$. 
	\end{definition}
	We recall the following result from \cite{AFG}.
	\begin{lemma}\label{lem}
		Let $(E,\|\;\|,\leq)$ be a normed lattice and $U\subseteq X$. Then
		\begin{enumerate}
			\item  $U$ is $q$-open iff $U$ is open and decreasing.
			\item  If $U$ is $q^{-1}$-dense and decreasing, then $U$ is dense.
		\end{enumerate}
	\end{lemma}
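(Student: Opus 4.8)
The plan is to work directly from the explicit form of the quasi-norm associated to a normed lattice in \cite{AFG}, namely $q(x)=\|x^{+}\|$ with $x^{+}=x\vee 0$, whose conjugate is then $q^{-1}(x)=q(-x)=\|x^{-}\|$ with $x^{-}=(-x)\vee 0$. Three elementary facts drive everything. First, since $|x^{+}|\le|x|$, the lattice-norm axiom gives $\|x^{+}\|\le\|x\|$, so each norm ball $B_{\|\;\|}(a,r)$ lies inside the $q$-ball $B_{q}(a,r)=\{x:\|(x-a)^{+}\|<r\}$. Second, $(x\vee a)-a=(x-a)^{+}$. Third, $(x\wedge a)-a=-(a-x)^{+}$. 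I will also use that the balls of each (quasi-)norm form a base for the corresponding topology, so openness and density may be tested on balls.

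For (1), first suppose $U$ is $q$-open. The containment $B_{\|\;\|}(a,r)\subseteq B_{q}(a,r)$ immediately yields that $U$ is norm-open. For the decreasing property, observe that if $x\le a$ then $(x-a)^{+}=0$, so $x\in B_{q}(a,r)$ for every $r>0$; hence whenever $a\in U$ the entire down-set $\{x:x\le a\}$ lies in any $q$-ball about $a$, forcing $U$ to be decreasing. Conversely, let $U$ be norm-open and decreasing, fix $a\in U$, and choose $r>0$ with $B_{\|\;\|}(a,r)\subseteq U$. If $x\in B_{q}(a,r)$, then $\|(x\vee a)-a\|=\|(x-a)^{+}\|<r$, so $x\vee a\in B_{\|\;\|}(a,r)\subseteq U$; since $x\le x\vee a$ and $U$ is decreasing, $x\in U$. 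Thus $B_{q}(a,r)\subseteq U$, and $U$ is $q$-open.

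For (2), assume $U$ is $q^{-1}$-dense and decreasing, and fix a norm ball $B_{\|\;\|}(a,r)$. The sets $B_{q^{-1}}(a,r)=\{x:\|(a-x)^{+}\|<r\}$ form a base for the $q^{-1}$-topology, so $q^{-1}$-density of $U$ supplies $u\in U$ with $\|(a-u)^{+}\|<r$. Put $v=u\wedge a$; since $v\le u\in U$ and $U$ is decreasing, $v\in U$, while $\|v-a\|=\|-(a-u)^{+}\|=\|(a-u)^{+}\|<r$, so $v\in U\cap B_{\|\;\|}(a,r)$. As $a$ and $r$ were arbitrary, $U$ meets every norm ball and is therefore dense.

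The only genuinely delicate point is making precise the passage between the asymmetric ($q$- or $q^{-1}$-) data and the symmetric norm; this is entirely handled by the two join/meet identities, which convert a small positive (resp. negative) part into a norm-close lattice modification $x\vee a$ (resp. $u\wedge a$) that stays on the correct side of $a$ so that the decreasing hypothesis applies. Pinning down the exact definition $q(x)=\|x^{+}\|$ of the associated quasi-norm is the one external input from \cite{AFG}; with that fixed, everything else is routine.
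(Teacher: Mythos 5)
Your proof is correct, but note that the paper does not actually prove this lemma: it is recalled from \cite{AFG} (the text immediately preceding it reads ``We recall the following result from \cite{AFG}''), so the comparison here is between a citation and a self-contained argument. Your argument is sound once one grants the single external input you isolate, namely that the quasi-norm associated to a normed lattice in \cite{AFG} is $q(x)=\|x^{+}\|$ (hence $q^{-1}(x)=\|x^{-}\|$); that is indeed the definition used there. Granting it, every step checks out in a real normed Riesz space: $0\le x^{+}\le|x|$ and monotonicity of a lattice norm give $q(x)\le\|x\|$, hence $B_{\|\;\|}(a,r)\subseteq B_{q}(a,r)$ and $q$-open implies open; $q(x-a)=0$ whenever $x\le a$ gives the decreasing property; translation-invariance of the lattice operations gives $(x\vee a)-a=(x-a)^{+}$ and $(x\wedge a)-a=-(a-x)^{+}$, which is exactly what is needed to replace a point that is $q$-close (resp. $q^{-1}$-close) to $a$ by a lattice modification ($x\vee a$, resp. $u\wedge a$) that is norm-close to $a$ and comparable with the original point, so that the decreasing hypothesis finishes each step; and balls do form bases for the topologies of $q$, $q^{-1}$ and the norm, so openness and density may legitimately be tested on balls. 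What your route buys is that the lemma becomes verifiable inside the paper rather than imported; what the paper's route buys is brevity. If your proof were to be inserted, the only addition I would insist on is an explicit statement that the normed lattices of \cite{AFG} are real Riesz spaces equipped with a monotone (lattice) norm, since the displacement identities and the inequality $\|x^{+}\|\le\|x\|$ rest on precisely that structure, which the paper's preliminaries never spell out.
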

	Finally, we characterize remote subsets of normed lattices.
	\begin{proposition}
		Let $(E,\|\;\|,\leq)$ be a normed lattice. A subset $A$ of $E$ is remote iff it is $(q,q^{-1})$-remote.
	\end{proposition}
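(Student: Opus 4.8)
The plan is to translate the bitopological notion into the concrete language of the norm and the two quasi-norms, and then to match it, set for set, with the ad hoc definition of remoteness for normed lattices. Viewing $E$ as the bispace $(E,\tau_{q},\tau_{q^{-1}})$ with $i$ corresponding to the quasi-norm $q$ and $j$ to its conjugate $q^{-1}$, Theorem \ref{remote subset}(3) tells us that $A$ is $(q,q^{-1})$-remote precisely when $A\subseteq U$ for every $q^{-1}$-dense set $U$ that is $q$-open. The whole argument then reduces to identifying this family of test sets with the family of dense, open and decreasing sets occurring in the definition of a remote subset, using only Lemma \ref{lem} together with the remark that the $q^{-1}$-topology is coarser than the norm topology.

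For the implication that a remote $A$ is $(q,q^{-1})$-remote, I would take an arbitrary $q^{-1}$-dense $q$-open $U$. By Lemma \ref{lem}(1) being $q$-open coincides with being open and decreasing, so $U$ is open and decreasing; since $U$ is moreover $q^{-1}$-dense and decreasing, Lemma \ref{lem}(2) shows $U$ is dense. Thus $U$ is dense, open and decreasing, and remoteness of $A$ yields $A\subseteq U$, which is exactly the $(q,q^{-1})$-remoteness condition.

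For the converse, suppose $A$ is $(q,q^{-1})$-remote and let $U$ be dense, open and decreasing. Again by Lemma \ref{lem}(1) the set $U$ is $q$-open, so it only remains to check that $U$ is $q^{-1}$-dense. This is the single genuinely topological step, and it is where the coarseness remark enters rather than Lemma \ref{lem}: since $\tau_{q^{-1}}$ is coarser than the norm topology, every norm-closed set is $q^{-1}$-closed, so $q^{-1}$-closures are at least as large as norm-closures and a norm-dense set is automatically $q^{-1}$-dense. Hence $U$ is a $q^{-1}$-dense $q$-open set, and $(q,q^{-1})$-remoteness of $A$ gives $A\subseteq U$, so $A$ is remote.

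The argument requires no computation, and I expect the only point needing care to be the bookkeeping: correctly attaching $\tau_{q}$ and $\tau_{q^{-1}}$ to the indices $i,j$ of Theorem \ref{remote subset}, and observing that Lemma \ref{lem}(2) and the coarseness remark between them force the two families of test sets to coincide. The passage from norm-density to $q^{-1}$-density is the main (indeed essentially the only) obstacle, and it is resolved purely by the ordering of the two topologies.
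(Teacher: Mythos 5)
Your proposal is correct and follows essentially the same route as the paper's own proof: both directions use Lemma \ref{lem}(1) to identify $q$-open sets with open decreasing sets, Lemma \ref{lem}(2) to pass from $q^{-1}$-density to norm-density in one direction, and the coarseness of the $q^{-1}$-topology relative to the norm topology to pass from norm-density to $q^{-1}$-density in the other. The only cosmetic difference is that you cite Theorem \ref{remote subset}(3) explicitly to phrase $(q,q^{-1})$-remoteness in terms of containment in test sets, whereas the paper uses that characterization tacitly.
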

	\begin{proof}
		Let $U\subseteq E$ be $q$-open and $q^{-1}$-dense. It follows from Lemma \ref{lem} that $U$ is decreasing, open and dense in the norm. Since $E$ is remote, $A\subseteq U$, as required.
		
		Conversely, let $A\subseteq E$ be $(q,q^{-1})$-remote and choose a dense, open and decreasing $G\subseteq E$. It follows from Lemma \ref{lem}(1) that $G$ is $q$-open. Since the topology deduced from $q^{-1}$ is coarser than that deduced from the norm, $G$ is $q^{-1}$-dense. By hypothesis, $A\subseteq G$. Thus $A$ is remote.
	\end{proof}
	
	%For a bilocale $(L,L_{1},L_{2})$, set \[\Rmt_{(i,j)} (L,L_{1},L_{2})=\{x\in L:\mathfrak{c}(x)\text{ is } (1,2)\text{-remote or }(2,1)\text{-remote}\}.\]
	
	%From Proposition \ref{bidense} we get that \[\Rmt_{(i,j)} (L,L_{1},L_{2})=\{x\in L:x\vee a=1\text{ for every }L_{2}\text{-dense }a\in L_{1}\text{ or } \text{ for every }L_{1}\text{-dense }a\in L_{2}.\]
	
	\section{$(i,j)$-remote sublocales and bilocalic maps}
	
	This section discusses preservation and reflection of $(i,j)$-remote and weakly $(i,j)$-remote sublocales by bilocalic maps.
	
	%We want to view the class of bilocales $(L,L_{1},L_{2})$ inducing a sublocale $\Rmt_{(i,j)}\!L$ as a category. It suffices to introduce the following notion of a bilocalic map.
	\begin{definition}
		We call $f:(L,L_{1},L_{2})\rightarrow (M,M_{1},M_{2})$ a \textit{bilocalic map} if (i) $f:L\rightarrow M$ is a localic map, and (ii) $f[L_{i}]\subseteq M_{i}$ and $f^{*}[M_{i}]\subseteq L_{i}$ for $i=1,2$.
		
		%For a bilocalic map $f:(L,L_{1},L_{2})\rightarrow (M,M_{1},M_{2})$, the localic map $f:L\rightarrow M$ is called the \textit{total part} of $f:(L,L_{1},L_{2})\rightarrow (M,M_{1},M_{2})$.
		
		%For a bilocalic map $f:(L,L_{1},L_{2})\rightarrow(M,M_{1},M_{2})$, $f[-]:\mathcal{S}(L)\rightarrow\mathcal{S}(M)$ and $f_{-1}[-]:\mathcal{S}(M)\rightarrow\mathcal{S}(L)$ are respectively the usual localic image and localic preimage functions induced by the total part of $f$.
	\end{definition}  
	
	For a bilocalic map $f:(L,L_{1},L_{2})\rightarrow (M,M_{1},M_{2})$, the localic map $f:L\rightarrow M$ is called the \textit{total part} of $f:(L,L_{1},L_{2})\rightarrow (M,M_{1},M_{2})$ and $f[-]:\mathcal{S}(L)\rightarrow\mathcal{S}(M)$ and $f_{-1}[-]:\mathcal{S}(M)\rightarrow\mathcal{S}(L)$ are respectively the usual localic image and localic preimage functions induced by the total part of $f$.
	
	%We have not seen the above introduced concept of a bilocalic map in the literature.

	\begin{example}
		(1) For a locale $L$ and $S\in \mathcal{S}(L)$, the inclusion map $(S,S,S)\hookrightarrow (L,L,L)$ is a bilocalic map.
		
		(2) It is clear from the above definition of a bilocalic map that, given any bilocalic map $f:(L,L_{1},L_{2})\rightarrow(M,M_{1},M_{2})$, the map $f^{*}:(M,M_{1},M_{2})\rightarrow(L,L_{1},L_{2})$ whose total part is the right adjoint of $f$, is a biframe map.  
	\end{example}%As in locales, we have that $$f(a)=\bigvee\{x\in M: f^{*}(x)\leq a\}$$ for every $a\in L$.
	We consider preservation of $(i,j)$-remote sublocales and weakly $(i,j)$-remote sublocales. We only prove the case of $(i,j)$-remoteness. The proof for weakly $(i,j)$-remoteness follows a similar sketch and uses the facts that the left adjoint $f^{*}$ of a localic map $f$ preserves complements and the localic pre-image function $f_{-1}[-]$ under $f$ preserves clopen sublocales.
	\begin{proposition}\label{ijndpreserve}
		Let $f:(L,L_{1},L_{2})\rightarrow(M,M_{1},M_{2})$ be a bilocalic map. Consider the following statements: 
		\begin{enumerate}
			\item $f[-]:\mathcal{S}(L)\rightarrow\mathcal{S}(M)$ preserves $(i,j)$-remote (resp. weakly $(i,j)$-remote) sublocales.
			\item $f_{-1}[-]:\mathcal{S}(M)\rightarrow\mathcal{S}(L)$ preserves $(i,j)$-nowhere dense (resp. clopen nowhere dense) sublocales.
			\item $f^{*}$ preserves $j$-dense (resp. complemented (in $L$) $j$-dense) elements.
		\end{enumerate}Then for $i\neq j\in\{1,2\}$, $(3)\Leftrightarrow (2)\Rightarrow(1)$. Moreover, if $(L,L_{1},L_{2})$ is balanced, then $(1)\Leftrightarrow (2)\Leftrightarrow(3)$.
	\end{proposition}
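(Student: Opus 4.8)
The plan is to establish the chain of implications $(3)\Leftrightarrow(2)\Rightarrow(1)$ in general, and then close the loop $(1)\Rightarrow(3)$ under the balanced hypothesis. The key tool throughout will be the characterizations of $(i,j)$-nowhere density and $(i,j)$-remoteness from Theorem \ref{ijnd} and Theorem \ref{remote subbilocale}, which translate these sublocale-level properties into statements about $j$-dense elements of $L_i$, together with the standard adjunction identities $f_{-1}[\mathfrak{c}_M(x)]=\mathfrak{c}_L(h(x))$ and $f_{-1}[\mathfrak{o}_M(x)]=\mathfrak{o}_L(h(x))$ recorded in the preliminaries (here $h=f^*$).

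For $(3)\Leftrightarrow(2)$, I would work through Corollary \ref{nd elements in bilocale}, which says a closed sublocale $\mathfrak{c}(a)$ (with $a\in L_i$) is $(i,j)$-nowhere dense exactly when $a$ is $j$-dense. Given an $(i,j)$-nowhere dense $N\in\mathcal{S}(M)$, Theorem \ref{ijnd} shows $d:=\bigvee\{x\in M_i:N\subseteq\mathfrak{c}(x)\}$ is $j$-dense in $M_i$, and since $\cl_i(N)=\mathfrak{c}_M(d)$, applying $f_{-1}[-]$ gives $f_{-1}[\cl_i(N)]=\mathfrak{c}_L(f^*(d))$ with $f^*(d)\in L_i$ because $f^*[M_i]\subseteq L_i$. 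The point is that $f_{-1}[-]$ preserves $(i,j)$-nowhere density precisely when $f_{-1}[\mathfrak{c}_M(d)]=\mathfrak{c}_L(f^*(d))$ is $(i,j)$-nowhere dense for such $d$, which by Corollary \ref{nd elements in bilocale} means $f^*(d)$ is $j$-dense — i.e. exactly condition $(3)$. I would also check that $f_{-1}[-]$ sends closures to closures and preserves the decomposition so that it suffices to test on the closed sublocales $\mathfrak{c}(d)$; this reduction is where Theorem \ref{ijnd}'s equivalence of $N$ with $\cl_i(N)$ does the work.

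For $(2)\Rightarrow(1)$, let $S\in\mathcal{S}(L)$ be $(i,j)$-remote and let $N'\in\mathcal{S}(M)$ be $(i,j)$-nowhere dense; I want $f[S]\cap N'=\mathsf{O}$. Using $(2)$, $f_{-1}[\cl_i(N')]$ is $(i,j)$-nowhere dense in $L$, so by $(i,j)$-remoteness $S\cap f_{-1}[\cl_i(N')]=\mathsf{O}$. The adjunction between $f[-]$ and $f_{-1}[-]$ then yields $f[S]\cap\cl_i(N')=\mathsf{O}$, whence $f[S]\cap N'=\mathsf{O}$ since $N'\subseteq\cl_i(N')$. The weakly-$(i,j)$-remote variant runs identically, using that $f^*$ preserves complements and $f_{-1}[-]$ preserves clopen sublocales, as the statement indicates.

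The main obstacle is the converse $(1)\Rightarrow(3)$ under balancedness, and this is where I expect the balanced hypothesis to be essential. The natural strategy is contrapositive: suppose $f^*$ fails to preserve $j$-density, so there is a $j$-dense $d\in L_i$ with $f^*$ applied somewhere producing a non-$j$-dense witness; I would instead test $(1)$ against a well-chosen $(i,j)$-remote sublocale of $L$ and derive a contradiction. In the balanced case Corollary \ref{ijndbalanced} identifies $(i,j)$-nowhere density of $N$ with nowhere density of $\cl_i(N)$ and the identity $a^*=a^\bullet$ on $L_i$ lets me replace the bilocale pseudocomplement by the ordinary one; this should let me produce, from a $j$-dense but non-preserved element, a remote (hence $(i,j)$-remote, by Example \ref{exaBLij}(6)) sublocale whose image meets an $(i,j)$-nowhere dense sublocale, contradicting $(1)$. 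The delicate point will be converting the failure of element-level preservation into a failure at the level of sublocale images, and I anticipate needing the equivalence $\nu_S(d)=1 \Leftrightarrow S\subseteq\mathfrak{o}(d)$ together with the balanced identity to make the witness concrete; without balancedness the gap between $a^\bullet$ and $a^*$ blocks exactly this reconstruction.
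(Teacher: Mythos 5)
Your handling of $(3)\Leftrightarrow(2)\Rightarrow(1)$ is correct and is essentially the paper's own argument: you test $(2)$ on the closed sublocales $\mathfrak{c}(d)$ with $d\in M_{i}$ $j$-dense via Corollary \ref{nd elements in bilocale}, reduce an arbitrary $(i,j)$-nowhere dense $N$ to $\cl_{i}(N)$ using Theorem \ref{ijnd} and Lemma \ref{ijndsubset}, and for $(2)\Rightarrow(1)$ pass disjointness across $f$ through the adjunction (which is legitimate here because $\cl_{i}(N')$ is closed, hence complemented, and $f_{-1}[-]$ sends $\mathfrak{o}(d)$ to $\mathfrak{o}(f^{*}(d))$). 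One small slip: the $j$-dense element whose preservation is at issue lives in $M_{i}$, not $L_{i}$, since $f^{*}:M\rightarrow L$.

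The genuine gap is in $(1)\Rightarrow(3)$ under balancedness. You outline a contrapositive and list the right tools (Example \ref{exaBLij}(6), the identity $a^{*}=a^{\bullet}$, the equivalence $\nu_{S}(d)=1\Leftrightarrow S\subseteq\mathfrak{o}(d)$), but you never produce the test sublocale; the step you defer as ``delicate'' is the entire content of the implication, and it has a one-line resolution requiring no contrapositive at all: take $\mathfrak{B}L$ itself. By Example \ref{exaBLij}(6), balancedness makes $\mathfrak{B}L$ $(i,j)$-remote; by $(1)$, $f[\mathfrak{B}L]$ is $(i,j)$-remote in $M$; so for any $j$-dense $a\in M_{i}$, Theorem \ref{remote subbilocale}(5) gives $f[\mathfrak{B}L]\subseteq\mathfrak{o}(a)$, whence $\mathfrak{B}L\subseteq f_{-1}[\mathfrak{o}(a)]=\mathfrak{o}(f^{*}(a))$; since a sublocale containing the least dense sublocale is dense, $f^{*}(a)$ is dense in $L$, hence $j$-dense by Observation \ref{idenseobs}. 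This is exactly the paper's proof, and it shows that the ``conversion from element-level to sublocale-level'' you worried about is handled precisely by the density of $\mathfrak{B}L$: one has $\mathfrak{B}L\subseteq\mathfrak{o}(x)$ if and only if $x$ is dense. (Your contrapositive would also close once you name $\mathfrak{B}L$ as the witness: if $f^{*}(a)$ is not $j$-dense then, by the balanced identity $x^{\bullet}=x^{*}$ on $L_{i}$, it is not dense, so $\mathfrak{B}L\not\subseteq\mathfrak{o}(f^{*}(a))$, so $f[\mathfrak{B}L]$ meets the $(i,j)$-nowhere dense sublocale $\mathfrak{c}(a)$, contradicting $(1)$ --- but as written your proposal stops short of this construction.)
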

	\begin{proof} 
		
		$(2)\Rightarrow(3)$: Let $a\in M_{i}$ be $j$-dense. By Corollary \ref{nd elements in bilocale}, $\mathfrak{c}(a)$ is $(i,j)$-nowhere dense. By hypothesis, $f_{-1}[\mathfrak{c}(a)]$ is $(i,j)$-nowhere dense. But $f_{-1}[\mathfrak{c}(a)]=\mathfrak{o}(f^{*}(a))$, so $\mathfrak{c}(f^{*}(a))$ is $(i,j)$-nowhere dense, making $f^{*}(a)\in L_{i}$ $j$-dense by Corollary \ref{nd elements in bilocale}.
		
		$(3)\Rightarrow(2)$: Let $A\in\mathcal{S}(M)$ be $(i,j)$-nowhere dense. Then $\bigvee\{x\in M_{i}:A\subseteq \mathfrak{c}(x)\}\in M_{i}$ is $j$-dense. It follows that $f^{*}\big(\bigvee\{x\in M_{i}:A\subseteq \mathfrak{c}(x)\}\big)$ is $j$-dense and $f^{*}\big(\bigvee\{x\in M_{i}:A\subseteq \mathfrak{c}(x)\}\big)\in L_{i}$ because $f^{*}$ is a biframe homomorphism. Therefore $$\mathfrak{c}\left(f^{*}\left(\bigvee\{x\in M_{i}:A\subseteq \mathfrak{c}(x)\}\right)\right)=f_{-1}\left[\mathfrak{c}\left(\bigvee\{x\in M_{i}:A\subseteq \mathfrak{c}(x)\}\right)\right]=f_{-1}[\cl_{i}(A)]$$ is $(i,j)$-nowhere dense by Proposition \ref{nd elements in bilocale}. Because $f_{-1}[A]\subseteq f_{-1}[\cl_{i}(A)]$, it follows from Lemma \ref{ijndsubset} that $f_{-1}[A]$ is $(i,j)$-nowhere dense.

		$(2)\Rightarrow(1)$: Let $A\in\mathcal{S}(L)$ be $(i,j)$-remote and choose an $(i,j)$-nowhere dense sublocale $N$ of $M$. Set $\cl_{i}(N)=\mathfrak{c}(a)$ for some $a\in M_{i}$. By (2), $$\cl_{i}(f_{-1}[\mathfrak{c}(a)])\cap A=\cl_{i}(\mathfrak{c}(f^{*}(a)))\cap A=\mathsf{O}.$$ But $f^{*}(a)\in L_{i}$, so $\mathfrak{c}(f^{*}(a))\cap A=\mathsf{O}$. Clearly $\mathfrak{c}(a)\cap f[A]=\mathsf{O}$. Thus $\cl_{i}(N)\cap f[A]=\mathsf{O}$.
		
		For the special case, we prove $(1)\Rightarrow (3)$. Assume that $(L,L_{1},L_{2})$ is balanced, $f[-]$ preserves $(i,j)$-remote sublocales and let $a\in M_{i}$ be $j$-dense. It follows from Example \ref{exaBLij}(6) that $\mathfrak{B}L$ is $(i,j)$-remote. Since $(i,j)$-remote sublocales are contained in every open sublocale induced by $L_{j}$-elements and $f[-]$ preserves $(i,j)$-remote sublocales, $f[\mathfrak{B}L]\subseteq \mathfrak{o}(a)$. Therefore $\mathfrak{B}L\subseteq f_{-1}[\mathfrak{o}(a)]=\mathfrak{o}(f^{*}(a))$, making the $L_{i}$-element $f^{*}(a)$ a dense element of $L$ and hence $j$-dense by Observation \ref{idenseobs}.% $f^{*}(a)$ is almost $j$-dense so that it is $j$-dense by Proposition \ref{obsidense}(2). 
	\end{proof}
	In the next result, we consider reflection of $(i,j)$-remoteness.
	
	\begin{proposition}\label{preimagebi}
		Let $f:(L,L_{1},L_{2})\rightarrow(M,M_{1},M_{2})$ be a bilocalic map that sends $j$-dense elements to $j$-dense elements. Then $f_{-1}[-]$ preserves $(i,j)$-remote sublocales.
	\end{proposition}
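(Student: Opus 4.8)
The plan is to prove that $f_{-1}[-]$ preserves $(i,j)$-remote sublocales by showing that for any $(i,j)$-remote sublocale $A$ of $M$, its preimage $f_{-1}[A]$ misses every $(i,j)$-nowhere dense sublocale of $L$. I would take a sublocale $A\in\mathcal{S}(M)$ that is $(i,j)$-remote and an arbitrary $(i,j)$-nowhere dense sublocale $N$ of $L$, and aim to verify $f_{-1}[A]\cap N=\mathsf{O}$. Using Theorem \ref{ijnd}, the key object attached to $N$ is the element $d=\bigvee\{x\in L_{i}:N\subseteq\mathfrak{c}(x)\}$, which is a $j$-dense element of $L_{i}$ and satisfies $\cl_{i}(N)=\mathfrak{c}(d)$.

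First I would exploit the characterization of $(i,j)$-remoteness in Theorem \ref{remote subbilocale}(5): since $A$ is $(i,j)$-remote, it is contained in every open sublocale $\mathfrak{o}(a)$ induced by a $j$-dense $a\in M_{i}$. The goal is to push the $j$-dense element $d\in L_{i}$ forward to a $j$-dense element of $M_{i}$ via the frame homomorphism $h=f^{*}$ (equivalently, to connect $\mathfrak{c}(d)$ with a closed sublocale coming from $M$). Here I would use the hypothesis that $f$ sends $j$-dense elements to $j$-dense elements, together with the relation $f_{-1}[\mathfrak{c}_{M}(x)]=\mathfrak{c}_{L}(h(x))$ and $f_{-1}[\mathfrak{o}_{M}(x)]=\mathfrak{o}_{L}(h(x))$ recalled in the preliminaries. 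The idea is that whenever $a\in M_{i}$ is $j$-dense, $h(a)\in L_{i}$ is $j$-dense, so $A\subseteq\mathfrak{o}(a)$ forces $f_{-1}[A]\subseteq f_{-1}[\mathfrak{o}(a)]=\mathfrak{o}(h(a))$; one then wants every $j$-dense element of $L_{i}$ to be reachable as, or dominated by, such an $h(a)$.

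The step I expect to be the main obstacle is precisely this last reachability issue: a $j$-dense element $d\in L_{i}$ need not literally be of the form $h(a)$ for a $j$-dense $a\in M_{i}$, so a direct pushforward argument stalls. The cleaner route, which I would pursue, reverses the direction: rather than transporting $d$ to $M$, I would show directly that $f_{-1}[A]\subseteq\mathfrak{o}(d)$ for every $j$-dense $d\in L_{i}$, which by Theorem \ref{remote subbilocale} is exactly $(i,j)$-remoteness of $f_{-1}[A]$. Since $h=f^{*}$ is a biframe homomorphism with $h[M_{i}]\subseteq L_{i}$ and $A\subseteq\mathfrak{o}(a)$ for all $j$-dense $a\in M_{i}$, I would compute $f_{-1}[A]\subseteq\bigcap\{\mathfrak{o}(h(a)):a\in M_{i}\text{ is }j\text{-dense}\}$, and then argue, using that $f$ carries $j$-dense elements to $j$-dense elements in the appropriate adjoint sense, that this intersection lies inside $\mathfrak{o}(d)$ for each $j$-dense $d$ of $L_{i}$.

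Alternatively, and perhaps most transparently, I would argue at the level of nowhere dense sublocales: given the $(i,j)$-nowhere dense $N$ in $L$ with associated $j$-dense $d$, I would check that $f[f_{-1}[A]]$ is $(i,j)$-remote in $M$ (or that $d$ arises from a $j$-dense element downstairs), so that $f_{-1}[A]\cap\mathfrak{c}(d)=\mathsf{O}$; combined with $N\subseteq\cl_{i}(N)=\mathfrak{c}(d)$ this yields $f_{-1}[A]\cap N=\mathsf{O}$ by Example \ref{exaBLij}(2). The crux throughout is the interplay between the hypothesis on $j$-dense elements and the adjunction identities $f_{-1}[\mathfrak{c}(x)]=\mathfrak{c}(h(x))$, and I expect the bookkeeping of which elements are $j$-dense on each side — rather than any deep structural fact — to be the delicate part.
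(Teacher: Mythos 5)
Your proposal assembles the right ingredients --- the characterization of $(i,j)$-remoteness in Theorem \ref{remote subbilocale} and the identities $f_{-1}[\mathfrak{c}(y)]=\mathfrak{c}(h(y))$, $f_{-1}[\mathfrak{o}(y)]=\mathfrak{o}(h(y))$ --- but it never closes the argument, and the step you single out as ``the main obstacle'' is a non-issue created by misreading the hypothesis. The hypothesis is about the localic map $f:L\rightarrow M$ itself (recall that a bilocalic map satisfies $f[L_{i}]\subseteq M_{i}$), not about the frame homomorphism $h=f^{*}:M\rightarrow L$; your claim ``whenever $a\in M_{i}$ is $j$-dense, $h(a)\in L_{i}$ is $j$-dense'' is the hypothesis of Proposition \ref{ijndpreserve}(3), not of this proposition. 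Because the hypothesis points forward along $f$, there is no ``reachability'' problem: given a $j$-dense $d\in L_{i}$, you do not need $d$ to arise as (or be related to) some $h(a)$ with $a$ $j$-dense chosen in advance --- you simply apply $f$ to $d$. Your second and third routes circle around exactly this instantiation but leave it as ``argue, using that $f$ carries $j$-dense elements to $j$-dense elements in the appropriate adjoint sense,'' which is precisely the part that constitutes the proof.

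Here is the missing argument, which is the paper's proof: let $A\in\mathcal{S}(M)$ be $(i,j)$-remote and $d\in L_{i}$ be $j$-dense. By hypothesis $f(d)\in M_{i}$ is $j$-dense, so Theorem \ref{remote subbilocale} gives $A\subseteq\mathfrak{o}(f(d))$, equivalently $A\cap\mathfrak{c}(f(d))=\mathsf{O}$. Since $h=f^{*}$ is left adjoint to $f$, we have $h(f(d))\leq d$, hence $$f_{-1}[A]\subseteq f_{-1}[\mathfrak{o}(f(d))]=\mathfrak{o}\bigl(h(f(d))\bigr)\subseteq\mathfrak{o}(d),$$ that is, $f_{-1}[A]\cap\mathfrak{c}(d)=\mathsf{O}$; by Theorem \ref{remote subbilocale} again, $f_{-1}[A]$ is $(i,j)$-remote. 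Note also that your domination condition is stated backwards: to conclude $\mathfrak{o}(h(a))\subseteq\mathfrak{o}(d)$ one needs $h(a)\leq d$ (i.e.\ $d$ dominates $h(a)$, which the choice $a=f(d)$ delivers via the adjunction inequality), not $d\leq h(a)$. With that instantiation and inequality supplied, your sketch becomes the paper's proof; without them it is not a proof.
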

	\begin{proof}
		Let $A\in \mathcal{S}(M)$ be $(i,j)$-remote and choose a $j$-dense $x\in L_{i}$. Since $f[L_{i}]\subseteq M_{i}$ and $f$ sends $j$-dense elements to $j$-dense elements, $f(x)\in M_{i}$ is $j$-dense. Therefore $A\cap \mathfrak{c}(f(x))=\mathsf{O}$ which implies that $f_{-1}[A]\cap\mathfrak{c}(x)=\mathsf{O}$, as required.
	\end{proof}
For the case of weakly $(i,j)$-remoteness, we impose a condition on $f$ such that its total part is a lattice homomorphism. The proof is similar to that of Proposition \ref{preimagebi}.
\begin{proposition}\label{preimagebiweakly}
	Let $f:(L,L_{1},L_{2})\rightarrow(M,M_{1},M_{2})$ be a bilocalic map that sends $j$-dense elements to $j$-dense elements and such that its total part is a lattice homomorphism. Then $f_{-1}[-]$ preserves $(i,j)$-remote sublocales.
\end{proposition}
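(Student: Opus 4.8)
The plan is to run the proof of Proposition \ref{preimagebi} essentially verbatim, replacing ``$j$-dense'' by ``complemented $j$-dense'' throughout and invoking the lattice-homomorphism hypothesis at exactly one new point. (Note that the content here is the \emph{weakly} $(i,j)$-remote case flagged in the preceding paragraph; the plain $(i,j)$-remote conclusion is already Proposition \ref{preimagebi} and needs no lattice-homomorphism hypothesis, so the statement is to be read with ``weakly''.) Concretely, I would take $A\in\mathcal{S}(M)$ weakly $(i,j)$-remote and fix a complemented $j$-dense element $x\in L_{i}$; by the ``resp.'' form of the characterization Theorem \ref{remote subbilocale}(4), it then suffices to prove $f_{-1}[A]\cap\mathfrak{c}(x)=\mathsf{O}$.

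The crucial step is to verify that $f(x)$ is a \emph{complemented} $j$-dense element of $M_{i}$. Membership $f(x)\in M_{i}$ is immediate from $f[L_{i}]\subseteq M_{i}$, and $j$-density of $f(x)$ is exactly the standing hypothesis that $f$ sends $j$-dense elements to $j$-dense elements. For complementedness I would write $x\vee x^{*}=1$ and $x\wedge x^{*}=0$ and apply the total part $f$, now using that it is a lattice homomorphism (hence preserves finite joins, finite meets, and the bounds $0,1$): this gives $f(x)\vee f(x^{*})=f(1)=1$ and $f(x)\wedge f(x^{*})=f(0)=0$, so $f(x^{*})$ is a complement of $f(x)$. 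With $f(x)$ now a complemented $j$-dense element of $M_{i}$ and $A$ weakly $(i,j)$-remote, the weak form of Theorem \ref{remote subbilocale}(4) yields $A\cap\mathfrak{c}(f(x))=\mathsf{O}$, that is, $A\subseteq\mathfrak{o}(f(x))$.

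It then remains to transport this containment back along $f$, just as in Proposition \ref{preimagebi}. Applying the order-preserving operator $f_{-1}[-]$ and the preimage formula $f_{-1}[\mathfrak{o}_{M}(y)]=\mathfrak{o}_{L}(h(y))$ recorded in the preliminaries gives $f_{-1}[A]\subseteq f_{-1}[\mathfrak{o}(f(x))]=\mathfrak{o}(h(f(x)))$. Since $h=f^{*}$ is left adjoint to the total part $f$, the counit inequality gives $h(f(x))\leq x$, whence $\mathfrak{o}(h(f(x)))\subseteq\mathfrak{o}(x)$ and so $f_{-1}[A]\subseteq\mathfrak{o}(x)$, i.e. $f_{-1}[A]\cap\mathfrak{c}(x)=\mathsf{O}$. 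As $x$ was an arbitrary complemented $j$-dense element of $L_{i}$, this exhibits $f_{-1}[A]$ as weakly $(i,j)$-remote.

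The one genuinely new point compared with Proposition \ref{preimagebi} — and the step I expect to be the crux — is the preservation of complementedness in the second paragraph. A general localic map preserves arbitrary meets (hence binary meets and the top $1$), but it need not preserve binary joins, and it need not send $0$ to $0$; indeed $f(0)$ can be strictly positive. This is precisely what forces the extra hypothesis that the total part be a lattice homomorphism: join-preservation together with $f(0)=0$ is exactly what is needed to turn the complement $x^{*}$ of $x$ into a complement $f(x^{*})$ of $f(x)$. Everything else is formally identical to the proof of Proposition \ref{preimagebi}, so I would keep that part brief.
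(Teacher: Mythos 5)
Your proposal is correct and matches the paper's approach exactly: the paper offers no separate proof, saying only that it is ``similar to that of Proposition \ref{preimagebi}'', and your argument is precisely that adaptation --- reading the conclusion as \emph{weakly} $(i,j)$-remote (the statement's wording is a slip), using the lattice-homomorphism hypothesis at the single new point where complementedness of $f(x)$ is needed, and pulling back along $f_{-1}[-]$ via $f^{*}(f(x))\leq x$ as in Proposition \ref{preimagebi}. The only caveat is that your step $f(x)\wedge f(x^{*})=f(0)=0$ requires ``lattice homomorphism'' to be read as bound-preserving (a localic map automatically preserves $1$ but need not preserve $0$, e.g.\ a closed sublocale inclusion), which, as you yourself note, is evidently the intended reading since the hypothesis is imposed exactly so that $f(x^{*})$ becomes a complement of $f(x)$.
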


\section{The sublocale $\Rmt_{(i,j)}\!L$}
	
	For a bilocale $(L,L_{1},L_{2})$, set \[\Rmt_{(L_{i},L_{j})} (L,L_{1},L_{2})=\{x\in L:\mathfrak{c}(x)\text{ is weakly } (i,j)\text{-remote}\}.\]
	From Proposition \ref{bidense}, we get that  \[\Rmt_{(L_{i},L_{j})} (L,L_{1},L_{2})=\{x\in L:x\vee a=1\text{ for each complemented }j\text{-dense }a\in L_{i}\}.\]
	
	We shall sometimes write $\Rmt_{(i,j)}\!L$ if the bilocale $(L,L_{1},L_{2})$ is clear from the context. 
	
	For the rest of this section, we investigate some properties of $\Rmt_{(i,j)}\!L$.
	
	We consider some examples.
	
	\begin{example}\label{booleanbi}
		
		(1) For the bitopological space given in Example \ref{BLnotnd}, we have $\Rmt_{(1,2)}\!\tau=\mathfrak{c}(\{a\})$. 
		
		%(1) Consider the bitopological space $(X,\tau_{1},\tau_{2})$, where $X=\{a,b,c,d\}$, $\tau_{1}=\{\emptyset,X,\{b,c\},\{b\}\}$ and $\tau_{2}=\{\emptyset,X,\{a,b\},\{c,d\}\}$. For the bilocale $(\tau,\tau_{1},\tau_{2})$, $\Rmt_{(1,2)}\!\tau=\{X\}$.% is a sublocale of $\tau$. 
		
		(2) A bilocale is \textit{Boolean}, \cite{S}, if $x\prec_{i}x$ for each $x\in L_{i}$, $i=1,2$, i.e., there is $c\in L_{j}$ ($i\neq j$) such that $x\wedge c=0$ and $x\vee c=1$. This tells us that each $x\in L_{i}$, $i=1,2$, is complemented in $L$. In every Boolean bilocale $(L,L_{1},L_{2})$, $L=\Rmt_{(i,j)}\!L$.% is a sublocale of $L$. %an object of $\TBiLoc$ and hence of $\RBiLoc_{(i,j)}$. A symmetric Boolean bilocale is an object of $\TBiLoc\cap \BCFLoc$. This is so because every Boolean locale is a coframe. 
		
		(3) Consider the \textit{biframe of reals} \cite{FS} which is the triple $(\mathfrak{O}\mathbb{R},\mathfrak{O}\mathcal{D}\mathbb{R},\mathfrak{O}\mathcal{U}\mathbb{R})$, where $$\mathfrak{O}\mathcal{D}\mathbb{R}=\{(-\infty,x):x\in \mathbb{R}\}\cup\{\emptyset,\mathbb{R}\}$$ and $$\mathfrak{O}\mathcal{U}\mathbb{R}=\{(y,\infty):y\in \mathbb{R}\}\cup\{\emptyset,\mathbb{R}\}.$$ Clearly, $\Rmt_{(\mathfrak{O}\mathcal{D}\mathbb{R},\mathfrak{O}\mathcal{U}\mathbb{R})}\!\mathfrak{O}\mathbb{R}=\mathfrak{O}\mathbb{R}$.
		
		(4) For a symmetric bilocale $(L,L_{1},L_{2})$, $\Rmt_{(i,j)}\!L=L$ if and only if $L$ is Boolean. See \cite[Corollary 2.21]{N} for the proof.
		
		(5) For a bilocale $(L,L_{1},L_{2})$, if $a\leq x$ and $a\in \Rmt_{(i,j)}\!L$, then $x\in \Rmt_{(i,j)}\!L$. In particular, $1\in \Rmt_{(i,j)}\!L$.
		
		(6) Recall that a frame homomorphism $h:M\rightarrow L$ is closed if $f(a\vee h(b))=f(a)\vee b$ for every $a\in L$, $b\in M$. We also recall from \cite[Proposition 4.3.]{DM} that if an element $a\in L$ is dense in $L$, then $a\oplus1$ is dense in $L\oplus M$: Using the fact that $(x\oplus y)^{*}=(x^{*}\oplus1)\vee (1\oplus y^{*})$ for all $x\in L$, $y\in M$ (from \cite{BP0}), we get that $$(a\oplus1)^{*}=(a^{*}\oplus1)\vee(1\oplus 1^{*})=(0\oplus1)\vee (1\oplus0)=0_{L\oplus M}.$$%from \cite{C} that a coproduct injection $q_{L}:L\Longrightarrow L\oplus M$ is \textit{closed} if for each $x\in L$ and $U\in L\oplus M$, $$q_{L}^{r}(U\vee q_{L}(x))=q_{L}^{r}(U)\vee x$$ where $q_{L}^{r}$ is the right adjoint of $q_{L}$ given by $$q_{L}^{r}(A)=\bigvee\{a\in L: q_{L}(a)\leq A\}$$ for all $A\in L\oplus M$. This also applies to a coproduct injection $q_{M}:M\Longrightarrow L\oplus M$.
		Now, for any symmetric bilocales $L$ and $M$ such that the  coproduct injection
			\begin{tikzcd}
				L \arrow[r, "q_{L}"]
				&L\oplus M
			\end{tikzcd} 
			is closed, if $x\oplus y\in\Rmt_{(i,j)}\!(L\oplus M)$ for any $x\in L$ and $y\in M$, then $x\in\Rmt_{(i,j)}\!L$: Let $x\in L$, $y\in M$ and assume that $x\oplus y\in\Rmt_{(i,j)}\!(L\oplus M)$. For any dense $a\in L$, we get that $q_{L}(a)=a\oplus 1$ is dense in $L\oplus M$. Since $x\oplus y\in\Rmt_{(i,j)}\!(L\oplus M)$, we get that $(x\oplus y)\vee (a\oplus 1)=1_{L\oplus M}$, which implies that $(x\oplus y)\vee q_{L}(a)=1_{L\oplus M}$. Therefore in light of $q_{L}$ being a closed homomorphism, $(q_{L})_{*}(x\oplus y)\vee a=1$. If $a=1$, then we are done. If $a\neq 1$, then \[(q_{L})_{*}(x\oplus y)=\bigvee\{b\in L:q_{L}(b)\leq x\oplus y\}=\bigvee\{b\in L:b\oplus 1\leq x\oplus y\}\neq 0.\] Therefore there exists $b\neq 0$ in $L$ such that $0_{L\oplus M}\neq b\oplus 1\leq x\oplus y$, which implies that $b\leq x$ and $y=1$. Therefore $$1=(q_{L})_{*}(x\oplus y)\vee a=(q_{L})_{*}(x\oplus 1)\vee a=(q_{L})_{*}(q_{L}(x))\vee a=x\vee a,$$where the latter equality holds since $q_{L}$ is injective. Thus $x\in \Rmt_{(i,j)}\!L$. 
			
			Consequently, if \begin{tikzcd}
				L \arrow[r, "q_{L}"]
				& L\oplus M &M\arrow[l,swap,"q_{M}"]
			\end{tikzcd} are closed coproduct injections, then for any $x\in L$ and $y\in M$, $x\oplus y\in\Rmt_{(i,j)}\!(L\oplus M)$ implies $x\in\Rmt_{(i,j)}\!L$ and $y\in\Rmt_{(i,j)}\!M$.
	\end{example}

We prove that $\Rmt_{(i,j)}\!L$ is a sublocale and is always closed.
	\begin{proposition}\label{remclosed}
		Let $(L,L_{1},L_{2})$ be a bilocale. Then $\Rmt_{(i,j)}\!L$ is a closed sublocale of $L$.
	\end{proposition}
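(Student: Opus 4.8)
The plan is to exhibit $\Rmt_{(i,j)}\!L$ explicitly as a closed sublocale $\mathfrak{c}(b)$ for a suitable element $b\in L$. Using the reformulation recorded immediately after the definition (which comes from Proposition \ref{bidense}), we have
$$\Rmt_{(i,j)}\!L=\{x\in L:x\vee a=1\text{ for each complemented }j\text{-dense }a\in L_{i}\}.$$
Write $D$ for the set of all complemented $j$-dense elements of $L_{i}$. First I would establish the following elementary lattice fact: if $a\in L$ is complemented with complement $a^{*}$, then for every $x\in L$,
$$x\vee a=1\iff a^{*}\leq x.$$
The reverse implication is immediate, since $a^{*}\leq x$ gives $x\vee a\geq a^{*}\vee a=1$. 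The forward implication uses distributivity of $L$ together with $a\wedge a^{*}=0$: from $x\vee a=1$ we compute $a^{*}=a^{*}\wedge(x\vee a)=(a^{*}\wedge x)\vee(a^{*}\wedge a)=a^{*}\wedge x\leq x$.

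Applying this equivalence to each $a\in D$, the condition ``$x\vee a=1$ for all $a\in D$'' becomes ``$a^{*}\leq x$ for all $a\in D$'', which in turn is equivalent to $\bigvee\{a^{*}:a\in D\}\leq x$. Since $L$ is a complete lattice, the element $b=\bigvee\{a^{*}:a\in D\}$ exists, and we obtain
$$\Rmt_{(i,j)}\!L=\{x\in L:b\leq x\}=\mathfrak{c}(b),$$
which is precisely the closed sublocale of $L$ induced by $b$. This identifies $\Rmt_{(i,j)}\!L$ as a closed sublocale and completes the argument.

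I do not anticipate a genuine obstacle in this proof; it reduces entirely to the translation $x\vee a=1\iff a^{*}\leq x$. The only point requiring care is that this equivalence relies on $a$ being \emph{complemented} in $L$ (so that its complement is unique and coincides with the pseudocomplement $a^{*}$) together with the distributivity of $L$ as a locale. This is exactly why the defining condition of $\Rmt_{(i,j)}\!L$ quantifies over complemented $j$-dense elements rather than arbitrary $j$-dense ones: the restriction to complemented elements is what makes the supremum $b$ and the resulting closed sublocale well-behaved. Once the equivalence is in hand, the description $\Rmt_{(i,j)}\!L=\mathfrak{c}(b)$ is immediate, and in particular shows $\Rmt_{(i,j)}\!L$ is automatically a sublocale without any separate verification of closure under meets and the Heyting operation.
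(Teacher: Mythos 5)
Your proof is correct, but it takes a genuinely different route from the paper's. The paper verifies the sublocale axioms by hand: it shows $\Rmt_{(i,j)}\!L$ is closed under arbitrary meets (using the fact that joining with a complemented element distributes over meets), closed under the Heyting operation (via $a\leq x\rightarrow a$), and then separately proves closedness by showing that $x\in\overline{\Rmt_{(i,j)}\!L}$ forces $x\vee y=1$ for every complemented $j$-dense $y\in L_{i}$, since $\bigwedge\Rmt_{(i,j)}\!L$ already lies in $\Rmt_{(i,j)}\!L$ and sits below $x$. Your argument short-circuits all three verifications at once: the equivalence $x\vee a=1\iff a^{*}\leq x$ (valid for complemented $a$, by finite distributivity and $a\vee a^{*}=1$) converts the defining condition into a single inequality $b\leq x$ with $b=\bigvee\{a^{*}:a\in D\}$, so that $\Rmt_{(i,j)}\!L=\mathfrak{c}(b)$ on the nose. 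What your approach buys is an \emph{explicit generator} of the closed sublocale and independence from the infinite ``colinearity'' law $y\vee\bigwedge_{\alpha}a_{\alpha}=\bigwedge_{\alpha}(y\vee a_{\alpha})$ that the paper invokes (and which, strictly speaking, requires $y$ to be complemented, a hypothesis the paper's wording at that step leaves tacit); what the paper's approach buys is a proof that exercises the sublocale axioms directly, which is the pattern reused elsewhere in that section, e.g.\ in Proposition \ref{ijremote}, where membership of $\bigwedge\Rmt_{(i,j)}\!L$ in $\Rmt_{(i,j)}\!L$ is what drives the argument. Both proofs are sound; yours is shorter and arguably sharper, since the identification $\Rmt_{(i,j)}\!L=\mathfrak{c}\bigl(\bigvee\{a^{*}:a\in D\}\bigr)$ is strictly more information than the bare statement of the proposition.
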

\begin{proof}
	Choose $\{a_{\alpha}:\alpha\in \Lambda\}\subseteq\Rmt_{(i,j)}\!L $ and let $y\in L_{i}$ be $j$-dense. Then $$y\vee \bigwedge{a_{\alpha}}=\bigwedge(y\vee a_{\alpha})=\bigwedge(1_{L})=1_{L}$$ where the first equality follows since complemented elements are colinear and the second equality follows since members of $\Rmt_{(i,j)}\!L$ meet $j$-dense elements of $L_{i}$ at the top. Therefore $\bigwedge a_{\alpha}\in\Rmt_{(i,j)}\!L$.

	Furthermore, let $x\in L$, $a\in \Rmt_{(i,j)} L$ and $y\in L_{i}$ be $j$-dense . Then $y\vee a=1_{L}$. Since $a\leq x\rightarrow a$, we have that $y\vee (x\rightarrow a)=1_{L}$. 
	Thus $x\rightarrow a\in\Rmt_{(i,j)} L$.

	Hence $\Rmt_{(i,j)}\!L$ is a sublocale of $L$.

		For closedness, we show that for every $A\in\mathcal{S}(L)$, $A\subseteq\Rmt_{(i,j)}\!L$ implies $\overline{A}\subseteq\Rmt_{(i,j)}\!L$. Assume that $A\subseteq\Rmt_{(i,j)}\!L$ and let $x\in \overline{A}$ and $y\in L_{i}$ be complemented $j$-dense. Since $\bigwedge A\in \Rmt_{(i,j)}\!L$, $\left(\bigwedge A\right)\vee y=1$. But $\bigwedge A\leq x$, so $x\vee y=1$. Thus $x\in\Rmt_{(i,j)}\!L$. Consequently, $\overline{\Rmt_{(i,j)}\!L}\subseteq\Rmt_{(i,j)}\!L$, making $\Rmt_{(i,j)}\!L$ a closed sublocale.
\end{proof}

\begin{obs}
	Since $\mathfrak{B}L$ is seldomly complemented, Proposition \ref{remclosed} tells us that $\Rmt_{(i,j)}\!L$ is not always the same as $\mathfrak{B}L$. This is also confirmed by Example \ref{booleanbi}(1) where $\mathfrak{B}\tau=\{\emptyset,X,\{a\},\{b,c\}\}\neq \Rmt_{(i,j)}\!\tau$. We also note from the recalled example that $\Rmt_{(1,2)}\!\tau$ is not always a remote sublocale of $L$. This is because $\Rmt_{(1,2)}\!\tau$ does not miss the $\tau$-nowhere dense sublocale $\mathfrak{c}(\{a,b\})$.%Since $0_{\Rmt_{(i,j)}\!\tau}=\{a\}$ does not meet the non-zero $\{b\}\in \tau_{2}$, $\Rmt_{(i,j)}\!\tau$ is also not almost $(1,2)$-nowhere dense (hence, not $(1,2)$-nowhere dense). 
	% $\cl_{1}\left(\Rmt_{(i,j)}\!\tau\right)=\Rmt_{(i,j)}\!\tau$ so that $$\inte_{2}\left(\cl_{1}\left(\Rmt_{(i,j)}\!\tau\right)\right)=\inte_{2}(\Rmt_{(i,j)}\!\tau)=\{\{a\},X\}\neq \mathsf{O}.$$%In fact this cited examples tells us that $\Rmt_{(i,j)}\!L$ and $\mathfrak{B}L$ are not gene%Infact, the only time $\mathfrak{B}L=\Rmt_{(i,j)}\!L$ is when $L$ is Boolean.
\end{obs}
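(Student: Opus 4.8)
The plan is to verify the two assertions of the observation separately: the first follows from the closedness established in Proposition~\ref{remclosed} combined with the behaviour of the Booleanization, while the second is settled entirely by the finite bispace of Example~\ref{booleanbi}(1).

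For the claim $\Rmt_{(i,j)}\!L\neq\mathfrak{B}L$ in general, I would argue as follows. By Proposition~\ref{remclosed}, $\Rmt_{(i,j)}\!L$ is always a \emph{closed} sublocale, hence complemented, its complement being the corresponding open sublocale. On the other hand $\mathfrak{B}L$ is the least dense sublocale, and a dense sublocale is closed precisely when it coincides with $L$, since the closure of any dense sublocale is $L$ itself. Thus an equality $\Rmt_{(i,j)}\!L=\mathfrak{B}L$ would force $\mathfrak{B}L$ to be closed, which can hold only when $\mathfrak{B}L=L$, i.e.\ when $L$ is Boolean; equivalently, since $\mathfrak{B}L$ is seldom complemented, the two sublocales generally differ. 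To exhibit a concrete witness I would return to Example~\ref{booleanbi}(1), where $\Rmt_{(1,2)}\!\tau=\mathfrak{c}(\{a\})=\{\{a\},\{a,b\},X\}$, and compute $\mathfrak{B}\tau=\{x\rightarrow 0:x\in\tau\}=\{\emptyset,X,\{a\},\{b,c\}\}$ inside the six-element frame $\tau=\{\emptyset,X,\{a\},\{b\},\{b,c\},\{a,b\}\}$; a direct inspection shows these two sets are distinct (e.g.\ $\{a,b\}$ lies in the former but not the latter), confirming $\Rmt_{(1,2)}\!\tau\neq\mathfrak{B}\tau$.

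For the claim that $\Rmt_{(i,j)}\!L$ need not be remote, I would produce, in the same bispace, a nowhere dense sublocale of $\tau$ that $\Rmt_{(1,2)}\!\tau$ fails to miss. The candidate is $\mathfrak{c}(\{a,b\})=\{\{a,b\},X\}$, corresponding to the closed set $\{c\}$. First I would check that it is nowhere dense, which amounts to $\mathfrak{c}(\{a,b\})\cap\mathfrak{B}\tau=\mathsf{O}$: indeed $\{\{a,b\},X\}\cap\{\emptyset,X,\{a\},\{b,c\}\}=\{X\}=\mathsf{O}$, reflecting that $\{c\}$ is $\tau$-closed with empty $\tau$-interior. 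Then, using that closed sublocales meet as $\mathfrak{c}(\{a\})\cap\mathfrak{c}(\{a,b\})=\mathfrak{c}(\{a\}\vee\{a,b\})=\mathfrak{c}(\{a,b\})\neq\mathsf{O}$, I would conclude that $\Rmt_{(1,2)}\!\tau=\mathfrak{c}(\{a\})$ does not miss this nowhere dense sublocale, hence is not remote.

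The only real work lies in the finite computations in Example~\ref{booleanbi}(1): listing the members of $\mathfrak{c}(\{a\})$, $\mathfrak{c}(\{a,b\})$ and $\mathfrak{B}\tau$, and verifying that $\{c\}$ has empty interior. These are routine lattice verifications, and the main conceptual points---that a closed sublocale can never coincide with a proper dense sublocale, and that the closedness of $\Rmt_{(i,j)}\!L$ does not force remoteness---require no machinery beyond Proposition~\ref{remclosed} and the definitions of $\mathfrak{B}L$ and of nowhere density recalled in Section~\ref{sect1}.
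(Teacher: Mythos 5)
Your proposal is correct, and its computational core coincides with the paper's. The verifications you perform in Example \ref{booleanbi}(1) --- $\Rmt_{(1,2)}\!\tau=\mathfrak{c}(\{a\})=\{\{a\},\{a,b\},X\}$, $\mathfrak{B}\tau=\{\emptyset,X,\{a\},\{b,c\}\}$, the nowhere density of $\mathfrak{c}(\{a,b\})$ via $\mathfrak{c}(\{a,b\})\cap\mathfrak{B}\tau=\{X\}=\mathsf{O}$, and $\mathfrak{c}(\{a\})\cap\mathfrak{c}(\{a,b\})=\mathfrak{c}(\{a\}\vee\{a,b\})=\mathfrak{c}(\{a,b\})\neq\mathsf{O}$ --- are exactly the checks underlying the observation, and they correctly establish both $\Rmt_{(1,2)}\!\tau\neq\mathfrak{B}\tau$ and the failure of remoteness (note that remoteness here is plain localic remoteness, and you rightly test it against ordinary nowhere density, i.e.\ against $\mathfrak{B}\tau$). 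The one place you take a genuinely different route is the general claim that $\Rmt_{(i,j)}\!L$ need not equal $\mathfrak{B}L$: the paper deduces this from Proposition \ref{remclosed} together with the fact that closed sublocales are complemented while $\mathfrak{B}L$ is seldom complemented, whereas you argue via density, observing that equality would make the least dense sublocale closed, and a closed dense sublocale must be all of $L$ (its closure is simultaneously itself and $L$), so equality forces $\mathfrak{B}L=L$, i.e.\ a Boolean total part. Your version is slightly sharper and more self-contained: it converts the paper's heuristic appeal to ``seldomly complemented'' into a precise necessary condition for equality, at no extra cost, while the paper's phrasing stays closer to the complementation theme of Proposition \ref{remclosed}. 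In both treatments the concrete finite example carries the real load, and your handling of it is complete and accurate.
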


%$\Rmt_{(i,j)}\!L$ is not always remote. For example,  $\Rmt_{(1,2)}\!\tau$ in  Example \ref{booleanbi}(1) does not miss the $\tau$-nowhere dense sublocale $\mathfrak{c}(\{a,b\})$. 
In what follows, we consider some conditions on a bilocale $(L,L_{1},L_{2})$ such that $\Rmt_{(i,j)}\!L$ is remote.

\begin{proposition}\label{remoteRemB}
	Let $(L,L_{1},L_{2})$ be a bilocale. If $L_{i}= L$, then $\Rmt_{(i,j)}\!L$ is a remote sublocale of $L$.
\end{proposition}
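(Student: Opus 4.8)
The plan is to prove that $\Rmt_{(i,j)}\!L$ misses every nowhere dense sublocale of $L$, which is exactly what it means to be remote. First I would reduce the task to closed nowhere dense sublocales: if $N$ is nowhere dense then so is its closure $\overline{N}$, and $N\subseteq\overline{N}$, so it suffices to show $\Rmt_{(i,j)}\!L\cap\mathfrak{c}(d)=\mathsf{O}$ for every dense $d\in L$, using that a closed sublocale $\mathfrak{c}(d)$ is nowhere dense precisely when $d$ is dense.

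The hypothesis $L_{i}=L$ enters through two observations. First, when $L_{i}=L$ the bilocale closure collapses to the ordinary closure: for any $S\in\mathcal{S}(L)$ one has $\bigvee\{a\in L_{i}:S\subseteq\mathfrak{c}(a)\}=\bigwedge S$, so $\cl_{i}(S)=\mathfrak{c}(\bigwedge S)=\overline{S}$. Second, every dense element of $L=L_{i}$ is $j$-dense, since for dense $d$ we have $d^{\bullet}=\bigvee\{y\in L_{j}:y\wedge d=0\}\leq\bigvee\{y\in L:y\wedge d=0\}=d^{*}=0$. Combining these with Corollary \ref{nd elements in bilocale}, each closed nowhere dense sublocale $\mathfrak{c}(d)$ (with $d$ dense, hence $j$-dense) is $(i,j)$-nowhere dense; and for an arbitrary nowhere dense $N$, its closure $\overline{N}=\cl_{i}(N)$ is $(i,j)$-nowhere dense, whence $N$ is $(i,j)$-nowhere dense by Lemma \ref{ijndsubset}. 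Thus, under $L_{i}=L$, the class of nowhere dense sublocales is contained in the class of $(i,j)$-nowhere dense sublocales.

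The proof then concludes by invoking that $\Rmt_{(i,j)}\!L$ is $(i,j)$-remote, i.e. misses every $(i,j)$-nowhere dense sublocale: since every nowhere dense sublocale is $(i,j)$-nowhere dense, $\Rmt_{(i,j)}\!L$ misses every nowhere dense sublocale and is therefore remote. Equivalently, one may record the conclusion as $\Rmt_{(i,j)}\!L\subseteq\mathfrak{B}L$, the largest remote sublocale (Example \ref{exaBLij}(5)).

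I expect the main obstacle to be this last step: upgrading $\Rmt_{(i,j)}\!L$ from weakly $(i,j)$-remote (which is how it is built, via missing only the \emph{clopen} $(i,j)$-nowhere dense sublocales) to genuinely $(i,j)$-remote. By Theorem \ref{remote subbilocale} this amounts to showing that the bottom element $r=\bigwedge\Rmt_{(i,j)}\!L$ satisfies $r\vee a=1$ for every $j$-dense $a\in L_{i}$, and not merely for every complemented $j$-dense $a$. This is precisely the point at which the hypothesis $L_{i}=L$ must be exploited to its full strength, since a general dense $\mathfrak{c}(d)$ is not clopen and hence is not ruled out by weak $(i,j)$-remoteness alone.
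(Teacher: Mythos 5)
Your preparatory steps are correct and they match the paper's own opening move: under $L_{i}=L$, every dense element of $L$ lies in $L_{i}$ and is therefore $j$-dense (Observation \ref{idenseobs}), the bilocale closure $\cl_{i}$ collapses to the ordinary closure, and hence, by Corollary \ref{nd elements in bilocale} and Lemma \ref{ijndsubset}, every nowhere dense sublocale of $L$ is $(i,j)$-nowhere dense. The problem is the step you yourself flag as ``the main obstacle'': that $\Rmt_{(i,j)}\!L$ is $(i,j)$-remote, and not merely weakly $(i,j)$-remote, i.e.\ that $r=\bigwedge\Rmt_{(i,j)}\!L$ satisfies $r\vee a=1$ for \emph{every} $j$-dense $a\in L_{i}$ rather than only for the complemented ones. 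You invoke this fact (it is Proposition \ref{ijremote} of the paper, which in any case appears only after the present proposition) and then leave it unproved. Since everything else in the proposal is routine, that unproved claim is the entire content of the proposition; as written, the proposal is a reduction, not a proof.

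Moreover, the hypothesis $L_{i}=L$ cannot close that gap under the paper's stated definition of $\Rmt_{(i,j)}\!L$, which demands $x\vee a=1$ only for \emph{complemented} $j$-dense $a\in L_{i}$. Take a symmetric bilocale $(L,L,L)$ with $L$ non-Boolean, say $L=\mathfrak{O}\mathbb{R}$; certainly $L_{i}=L$. There $a^{\bullet}=a^{*}$, so $j$-density coincides with density, the only complemented $j$-dense element is $1$, and consequently $\Rmt_{(i,j)}\!L=L$ (equivalently, by Example \ref{exaBLij}(3)(b), every sublocale of a symmetric bilocale is weakly $(i,j)$-remote). Yet $L$ is not remote in itself, since $\mathfrak{c}(d)$ is a nonvoid nowhere dense sublocale for any dense $d\neq 1$. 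So the step you left open fails in this generality, and no argument can supply it from the literal definition. For comparison, the paper's own proof crosses the same bridge silently: it takes a dense $y\in L=L_{i}$, notes that $y$ is $j$-dense, and cites Theorem \ref{remote subbilocale} to conclude $\mathfrak{c}(x)\subseteq\mathfrak{o}(y)$ for every $x\in\Rmt_{(i,j)}\!L$ --- but that is the $(i,j)$-remote clause of the theorem, whereas membership in $\Rmt_{(i,j)}\!L$ certifies only the weakly $(i,j)$-remote (complemented) clause; Proposition \ref{ijremote} suffers from the same defect. Both your argument and the paper's become correct precisely when $\Rmt_{(i,j)}\!L$ is read as $\{x\in L:x\vee a=1\text{ for all }j\text{-dense }a\in L_{i}\}$, i.e.\ as built from $(i,j)$-remote rather than weakly $(i,j)$-remote closed sublocales --- a reading the paper itself implicitly uses elsewhere, for instance in Example \ref{booleanbi}(4). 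So your diagnosis of where the difficulty sits is accurate, but diagnosing the obstacle is not the same as overcoming it.
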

\begin{proof}
	%(1) Follows from \cite[Corollary 2.22.]{N}. 
	
	Let $y\in L$ be dense. Since $y\in L_{i}$, it follows from Observation \ref{idenseobs} that $y$ is $j$-dense. By Theorem \ref{remote subbilocale}, $\mathfrak{c}(x)\subseteq\mathfrak{o}(y)$ for every $x\in \Rmt_{(i,j)}\!L$ which means that $x\in \mathfrak{o}(y)$ for all $x\in \Rmt_{(i,j)}\!L$. Therefore $\Rmt_{(i,j)}\!L\subseteq\mathfrak{o}(y)$. Thus $\Rmt_{(i,j)}\!L$ is a remote sublocale of $L$.
\end{proof}
\begin{obs}
	(1) The converse of Proposition \ref{remoteRemB} is not always true. Example \ref{booleanbi}(3) is a counterexample. This tells us that the condition hypothesized in Proposition \ref{remoteRemB} is one of the many conditions making $\Rmt_{(i,j)}\!L$ remote.%Another counter example is the \textit{biframe of reals} \cite{FS} which is the triple $(\mathfrak{O}\mathbb{R},\mathfrak{O}\mathcal{D}\mathbb{R},\mathfrak{O}\mathcal{U}\mathbb{R})$, where $$\mathfrak{O}\mathcal{D}\mathbb{R}=\{(-\infty,x):x\in \mathbb{R}\}\cup\{\emptyset,\mathbb{R}\}$$ and $$\mathfrak{O}\mathcal{U}\mathbb{R}=\{(y,\infty):y\in \mathbb{R}\}\cup\{\emptyset,\mathbb{R}\}.$$ The only $\mathfrak{O}\mathcal{U}\mathbb{R}$-dense member of $\mathfrak{O}\mathcal{D}\mathbb{R}$ is $\mathbb{R}$, making $\Rmt_{(\mathfrak{O}\mathcal{D}\mathbb{R},\mathfrak{O}\mathcal{U}\mathbb{R})}\!\mathfrak{O}\mathbb{R}$ a sublocale of $\mathfrak{O}\mathbb{R}$ but $\mathfrak{O}\mathbb{R}\neq \mathfrak{O}\mathcal{D}\mathbb{R}$.
	
	(2) Since $\mathfrak{B}L$ is the largest remote sublocale of a locale $L$, Theorem \ref{remoteRemB} also gives a condition when $\Rmt_{(i,j)}\!L\subseteq\mathfrak{B}L$.
\end{obs}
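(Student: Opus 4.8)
The plan is to handle the two parts separately, as they rely on different ingredients. Part (2) is the quicker of the two. I would recall from \cite{N} that $\mathfrak{B}L$ is the \emph{largest} remote sublocale of the locale $L$, so that any remote sublocale of $L$ is automatically contained in $\mathfrak{B}L$. Proposition \ref{remoteRemB} provides the hypothesis $L_i=L$ under which $\Rmt_{(i,j)}\!L$ is remote; feeding this into the maximality of $\mathfrak{B}L$ gives $\Rmt_{(i,j)}\!L\subseteq\mathfrak{B}L$ at once. Thus ``$L_i=L$'' is exactly the sought condition forcing the containment, and this paragraph-level argument is routine.

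For part (1) the plan is to display a bilocale witnessing that the hypothesis $L_i=L$ is not forced by the conclusion of Proposition \ref{remoteRemB}, i.e. to refute the necessity of $L_i=L$. The biframe of reals $(\mathfrak{O}\mathbb{R},\mathfrak{O}\mathcal{D}\mathbb{R},\mathfrak{O}\mathcal{U}\mathbb{R})$ from Example \ref{booleanbi}(3) is the natural candidate. The two facts I would verify are: first, that neither part is total, since $\mathfrak{O}\mathcal{D}\mathbb{R}=\{(-\infty,x):x\in\mathbb{R}\}\cup\{\emptyset,\mathbb{R}\}$ and $\mathfrak{O}\mathcal{U}\mathbb{R}=\{(y,\infty):y\in\mathbb{R}\}\cup\{\emptyset,\mathbb{R}\}$ are proper subframes of $\mathfrak{O}\mathbb{R}$, so $L_i\neq L$ for $i=1,2$; and second, that $\Rmt_{(i,j)}\!\mathfrak{O}\mathbb{R}=\mathfrak{O}\mathbb{R}$. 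For the latter I would use the characterisation $\Rmt_{(i,j)}\!L=\{x\in L:x\vee a=1\text{ for every complemented }j\text{-dense }a\in L_i\}$: since $\mathbb{R}$ is connected, the only complemented elements of $\mathfrak{O}\mathbb{R}$ are $\emptyset$ and $\mathbb{R}$, and of these only $\mathbb{R}$ is $j$-dense, so the defining condition is met by every $U\in\mathfrak{O}\mathbb{R}$. Hence $\Rmt_{(i,j)}\!L=L$ while $L_i\neq L$, which is the advertised counterexample.

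The step I expect to be the main obstacle is fixing the precise sense in which this refutes the ``converse''. Literally, the conclusion of Proposition \ref{remoteRemB} is remoteness of $\Rmt_{(i,j)}\!L$, and here $\Rmt_{(i,j)}\!L=\mathfrak{O}\mathbb{R}$ is \emph{not} remote (being non-Boolean, $\mathfrak{O}\mathbb{R}\neq\mathfrak{B}(\mathfrak{O}\mathbb{R})$), which is consistent with the remark in the abstract that $\Rmt_{(i,j)}\!L$ is ``seldom remote''. The clean reading, which I would adopt, is therefore that the converse concerns the \emph{necessity} of $L_i=L$ for $\Rmt_{(i,j)}\!L$ to reach its largest value $L$: the example supplies $\Rmt_{(i,j)}\!L=L$ with both parts proper, so $L_i=L$ is sufficient but not necessary, and the hypothesis is merely one of several routes to a large $\Rmt_{(i,j)}\!L$. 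I would cross-check this against Example \ref{booleanbi}(4), where for \emph{symmetric} bilocales $\Rmt_{(i,j)}\!L=L$ is equivalent to Booleanness; since the biframe of reals is non-symmetric and non-Boolean yet still satisfies $\Rmt_{(i,j)}\!L=L$, the same example shows that the symmetry hypothesis there cannot be omitted. Settling this interpretive point is the only delicate part; the computations themselves are short.
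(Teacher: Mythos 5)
Your part (2) and your computations in part (1) coincide with what the paper does: the paper justifies (2) solely by the maximality of $\mathfrak{B}L$ among remote sublocales, and for (1) it offers nothing beyond the citation of Example \ref{booleanbi}(3), whose content you verified correctly ($\Rmt_{(i,j)}\!\mathfrak{O}\mathbb{R}=\mathfrak{O}\mathbb{R}$ by connectedness of $\mathbb{R}$, while $\mathfrak{O}\mathcal{D}\mathbb{R}$ and $\mathfrak{O}\mathcal{U}\mathbb{R}$ are proper subframes). You also put your finger on the genuine difficulty, which the paper glosses over: since $\mathfrak{O}\mathbb{R}$ is not Boolean, $\Rmt_{(i,j)}\!\mathfrak{O}\mathbb{R}=\mathfrak{O}\mathbb{R}$ is \emph{not} a remote sublocale of itself, so the cited example does not satisfy the hypothesis of the converse of Proposition \ref{remoteRemB} and therefore cannot refute that converse --- it is consistent with it, not a counterexample to it. On this point your scepticism is justified, and the defect lies in the paper's observation rather than in your verification.

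Where your proposal itself has a gap is in the repair you adopt. You read the converse as the necessity of $L_{i}=L$ for $\Rmt_{(i,j)}\!L$ to reach its largest value $L$, asserting that $L_{i}=L$ is ``sufficient but not necessary'' for this. Sufficiency is false: by the paper's own Example \ref{booleanbi}(4), in a symmetric bilocale (where $L_{1}=L_{2}=L$) one has $\Rmt_{(i,j)}\!L=L$ only if $L$ is Boolean, so any non-Boolean symmetric bilocale satisfies $L_{i}=L$ while $\Rmt_{(i,j)}\!L\neq L$; hence the statement you are taking the converse of is not Proposition \ref{remoteRemB} under any rewording. What the reals example actually witnesses is the weaker fact (cf.\ Propositions \ref{remequalL} and \ref{ijremote}) that $L$ can be $(i,j)$-remote in itself without $L_{i}=L$; under that reading the ``converse'' fails trivially, since $\Rmt_{(i,j)}\!L$ is \emph{always} $(i,j)$-remote. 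If one wants an honest counterexample to the literal converse --- $\Rmt_{(i,j)}\!L$ remote yet $L_{i}\neq L$ --- one should instead use a Boolean bilocale with proper parts whose total part is a Boolean locale, e.g.\ $L=\{0,a,a^{*},1\}$ the four-element Boolean algebra with $L_{1}=\{0,a,1\}$ and $L_{2}=\{0,a^{*},1\}$: here $\Rmt_{(i,j)}\!L=L$ by Example \ref{booleanbi}(2), $L=\mathfrak{B}L$ so every sublocale of $L$ is remote, and both parts are proper. So claim (1) of the observation is in fact true, but neither the paper's cited example nor your reinterpretation establishes it.
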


Although $\Rmt_{(i,j)}\!L$ may not always be remote, it is always $(i,j)$-remote, as shown below.

\begin{proposition}\label{ijremote}
	Let $(L,L_{1},L_{2})$ be a bilocale. Then $\Rmt_{(i,j)}\!L$ is $(i,j)$-remote.
\end{proposition}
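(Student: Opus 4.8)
The plan is to turn the closedness of $\Rmt_{(i,j)}\!L$ into a single join identity. By Proposition \ref{remclosed} the set $\Rmt_{(i,j)}\!L$ is a closed sublocale, and the meet computation in that proof shows $r:=\bigwedge\Rmt_{(i,j)}\!L$ lies in $\Rmt_{(i,j)}\!L$, so $\Rmt_{(i,j)}\!L=\mathfrak{c}(r)$. By Theorem \ref{remote subbilocale}(4) (equivalently Proposition \ref{bidense}), to prove that $\mathfrak{c}(r)$ is $(i,j)$-remote it suffices to check that $\mathfrak{c}(r)\cap\mathfrak{c}(x)=\mathsf{O}$, i.e.\ that $r\vee x=1$, for every $j$-dense $x\in L_{i}$. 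So the entire argument reduces to establishing this join identity for all $j$-dense elements of $L_{i}$.

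Half of this is immediate. Since $r\in\Rmt_{(i,j)}\!L$, the closed sublocale $\mathfrak{c}(r)$ is weakly $(i,j)$-remote, so the ``resp.'' clause of Proposition \ref{bidense} gives $r\vee a=1$ for every \emph{complemented} $j$-dense $a\in L_{i}$; combined with Theorem \ref{ijnd} and the clopen characterization of Proposition \ref{ndbilocaleclopen}, this already shows $\mathfrak{c}(r)$ misses every clopen $(i,j)$-nowhere dense sublocale. The real content of the proposition is therefore to upgrade ``clopen $(i,j)$-nowhere dense'' to ``$(i,j)$-nowhere dense'', that is, to pass from complemented $j$-dense witnesses to arbitrary $j$-dense $x\in L_{i}$.

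This upgrade is the step I expect to be the main obstacle. The natural attempt is to form $T=\mathfrak{c}(r)\cap\mathfrak{c}(x)=\mathfrak{c}(r\vee x)$ and note that $T$ is simultaneously weakly $(i,j)$-remote (being contained in $\mathfrak{c}(r)$, cf.\ Example \ref{exaBLij}(2)) and $(i,j)$-nowhere dense (being contained in $\mathfrak{c}(x)$, which is $(i,j)$-nowhere dense by Corollary \ref{nd elements in bilocale}, via Lemma \ref{ijndsubset}). If one can further arrange that $T$ is \emph{clopen} — equivalently, that $r\vee x$ is complemented in $L$ — then, being a clopen $(i,j)$-nowhere dense sublocale sitting inside the weakly $(i,j)$-remote sublocale $\mathfrak{c}(r)$, it is missed by $\mathfrak{c}(r)\supseteq T$, so $T\cap T=T=\mathsf{O}$ and hence $r\vee x=1$. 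I would try to secure the needed complementation either by reducing to the case where $x$ (and thus the $L_{i}$-witness $\bigvee\{y\in L_{i}:T\subseteq\mathfrak{c}(y)\}$) may be taken complemented — automatic for Boolean bilocales by Example \ref{booleanbi}(2) — or by approximating a general $j$-dense element of $L_{i}$ by complemented $j$-dense ones and exploiting that $\Rmt_{(i,j)}\!L$ is closed under the resulting joins.

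A parallel route worth trying is the balanced reduction: in a balanced bilocale, Corollary \ref{ijndbalanced} identifies $(i,j)$-nowhere density with ordinary nowhere density of the $i$-closure, and Example \ref{exaBLij}(6) shows every remote sublocale is $(i,j)$-remote, so there the claim should follow by comparing $\mathfrak{c}(r)$ with $\mathfrak{B}L$. Since the stated proposition assumes no balancedness, however, the decisive point remains the complementation issue above, and that is where I would concentrate the effort.
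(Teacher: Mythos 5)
Your reduction and your diagnosis are both exactly right: everything comes down to whether $r=\bigwedge\Rmt_{(i,j)}\!L$ satisfies $r\vee x=1$ for \emph{every} $j$-dense $x\in L_{i}$, while membership in $\Rmt_{(i,j)}\!L$ (defined via weakly $(i,j)$-remote closed sublocales, i.e.\ via the ``resp.''\ clause of Proposition \ref{bidense}) only yields $r\vee a=1$ for \emph{complemented} $j$-dense $a\in L_{i}$. But none of your proposed repairs can succeed, because the gap is unfillable: with the paper's stated definition of $\Rmt_{(i,j)}\!L$, the proposition is false. Take the symmetric bilocale $(L,L,L)$ on any non-Boolean locale $L$. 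By Example \ref{exaBLij}(3)(b), every sublocale of a symmetric bilocale is weakly $(i,j)$-remote (equivalently: $1$ is the only element of $L$ that is both dense and complemented, so the membership condition is vacuous), hence $\Rmt_{(i,j)}\!L=L$. On the other hand, by Example \ref{exaBLij}(3)(a), $(i,j)$-remoteness in a symmetric bilocale is ordinary remoteness, and $L$ is not remote in itself when $L$ is non-Boolean: any dense $a\neq 1$ gives a nonvoid nowhere dense sublocale $\mathfrak{c}(a)$ with $\mathfrak{c}(a)\cap L\neq\mathsf{O}$. So $\Rmt_{(i,j)}\!L$ need not be $(i,j)$-remote.

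For comparison: the paper's own proof is a one-liner that commits precisely the slip you refused to make. It reads: let $x\in L_{i}$ be $j$-dense; since $\bigwedge\Rmt_{(i,j)}\!L\in\Rmt_{(i,j)}\!L$, we have $x\vee\bigwedge\Rmt_{(i,j)}\!L=1$ --- that is, it invokes the membership condition as though it applied to all $j$-dense elements of $L_{i}$ rather than only to the complemented ones. (The same conflation occurs in the proof of Proposition \ref{remoteRemB} and, through the implication $(1)\Rightarrow(2)$, in Proposition \ref{remequalL}; it also explains why Example \ref{booleanbi}(4), imported from \cite{N}, is inconsistent with the weak definition, under which $\Rmt_{(i,j)}\!L=L$ for \emph{every} symmetric bilocale.) The proposition becomes true --- indeed immediate, and the paper's one-line argument becomes valid --- if one instead defines $\Rmt_{(i,j)}\!L=\{x\in L:\mathfrak{c}(x)\text{ is }(i,j)\text{-remote}\}$; but then the proof of Proposition \ref{remclosed} collapses, since its meet computation rests on colinearity of \emph{complemented} elements, which is exactly what the weak definition was introduced to provide (and closedness of the strongly-defined set is the question left open in \cite{N}). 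So you located the genuine fault line: closedness and $(i,j)$-remoteness are established for two different sets, and no single definition supports both arguments as written.
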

\begin{proof}
	Let $x\in L_{i}$ be $j$-dense. Since $\bigwedge\!\Rmt_{(i,j)}\!L\in \Rmt_{(i,j)}\!L$, $x\vee \bigwedge\!\Rmt_{(i,j)}\!L=1$ so that $\Rmt_{(i,j)}\!L=\mathfrak{c}(\bigwedge\!\Rmt_{(i,j)}\!L)\subseteq\mathfrak{o}(x)$. 
\end{proof}

In the following result, we give neccessary and sufficient conditions for $\Rmt_{(i,j)}\!L$ to be the whole locale.

\begin{proposition}\label{remequalL}
	Let $(L,L_{1},L_{2})$ be a bilocale. The following statements are equivalent.
	\begin{enumerate}
		\item $\Rmt_{(i,j)}\!L=L$.
		\item $L$ is $(i,j)$-remote as a sublocale of itself.
		\item $1$ is the only $j$-dense element of $L_{i}$ which is complemented in $L$.
		\item $0\in\Rmt_{(i,j)}\!L$.
	\end{enumerate}
\end{proposition}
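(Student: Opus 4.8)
The plan is to handle the three assertions $(1)$, $(3)$, $(4)$, which all speak about the weak notion encoded in $\Rmt_{(i,j)}\!L$, by unwinding the explicit description $\Rmt_{(i,j)}\!L=\{x\in L:x\vee a=1\text{ for every complemented }j\text{-dense }a\in L_{i}\}$ furnished by Proposition \ref{bidense}, and then to bridge to the full $(i,j)$-remoteness of $(2)$ using the two structural facts already established for $\Rmt_{(i,j)}\!L$.

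First I would prove $(1)\Leftrightarrow(4)$. By Proposition \ref{remclosed}, $\Rmt_{(i,j)}\!L$ is a closed sublocale, hence equal to $\mathfrak{c}\!\left(\bigwedge\Rmt_{(i,j)}\!L\right)$; a closed sublocale of $L$ is all of $L$ exactly when it contains $0$, so $\Rmt_{(i,j)}\!L=L$ iff $0\in\Rmt_{(i,j)}\!L$. Next, $(4)\Leftrightarrow(3)$ is obtained by reading the description above at $x=0$: the condition $0\in\Rmt_{(i,j)}\!L$ says that $a=0\vee a=1$ for every complemented $j$-dense $a\in L_{i}$, which is precisely the statement that $1$ is the only such element. (The same description gives $(3)\Rightarrow(1)$ at once, since if every complemented $j$-dense $a$ equals $1$, then $x\vee a=1$ holds trivially for all $x\in L$.)

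The structurally interesting step is $(1)\Leftrightarrow(2)$, where the point is to reconcile the fact that $\Rmt_{(i,j)}\!L$ is built from \emph{weakly} $(i,j)$-remote closed sublocales with the full $(i,j)$-remoteness demanded by $(2)$. For $(1)\Rightarrow(2)$ I would invoke Proposition \ref{ijremote}: $\Rmt_{(i,j)}\!L$ is always $(i,j)$-remote, so if it coincides with $L$ then $L$ is $(i,j)$-remote as a sublocale of itself. For $(2)\Rightarrow(1)$, assuming $L$ is $(i,j)$-remote, every closed sublocale $\mathfrak{c}(x)$ is a sublocale of the $(i,j)$-remote sublocale $L$ and is therefore $(i,j)$-remote by Example \ref{exaBLij}(2), hence weakly $(i,j)$-remote by Example \ref{exaBLij}(4); thus $x\in\Rmt_{(i,j)}\!L$ for every $x\in L$, giving $\Rmt_{(i,j)}\!L=L$.

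I expect the only real subtlety to be conceptual rather than computational. The naive reading of $(3)\Rightarrow(2)$ would require showing directly that ``$1$ is the only \emph{complemented} $j$-dense element of $L_{i}$'' forces ``$1$ is the only $j$-dense element of $L_{i}$'' (the latter being what $(2)$ amounts to via Theorem \ref{remote subbilocale}), and this is not evident by an element-wise density argument. The proof sidesteps it entirely by routing $(2)$ through Proposition \ref{ijremote} and Example \ref{exaBLij}, so that the gap between the weak and the full notion is closed by the already-proved $(i,j)$-remoteness of $\Rmt_{(i,j)}\!L$ rather than by any direct manipulation of $j$-dense elements.
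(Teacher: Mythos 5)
Your proof is correct, and it rests on the same two pillars as the paper's own argument: Proposition \ref{ijremote} for $(1)\Rightarrow(2)$, and the closedness of $\Rmt_{(i,j)}\!L$ (Proposition \ref{remclosed}) for recovering $(1)$ from $(4)$. The difference is in the decomposition and in how $(2)$ is linked back to the rest. The paper runs the single cycle $(1)\Rightarrow(2)\Rightarrow(3)\Rightarrow(4)\Rightarrow(1)$, and its exit from $(2)$ is the implication $(2)\Rightarrow(3)$: a complemented $j$-dense $x\in L_{i}$ makes $\mathfrak{c}(x)$ an $(i,j)$-nowhere dense sublocale (Corollary \ref{nd elements in bilocale}), so $(2)$ forces $\mathfrak{c}(x)=L\cap\mathfrak{c}(x)=\mathsf{O}$, i.e.\ $x=1$. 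You instead make $(1)\Leftrightarrow(4)\Leftrightarrow(3)$ self-contained by reading the explicit description of $\Rmt_{(i,j)}\!L$ at $x=0$, and you close the loop with $(2)\Rightarrow(1)$: every $\mathfrak{c}(x)$ is contained in the $(i,j)$-remote sublocale $L$, hence is $(i,j)$-remote by Example \ref{exaBLij}(2), hence weakly $(i,j)$-remote by Example \ref{exaBLij}(4), so $x\in\Rmt_{(i,j)}\!L$ by the very definition of $\Rmt_{(i,j)}\!L$. Both bridges are one-line arguments of comparable depth; yours buys an equivalence of $(1)$, $(3)$, $(4)$ that never mentions nowhere density, while the paper's makes explicit the pointwise content of $(2)$ (it annihilates every complemented $j$-dense element of $L_{i}$). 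Your closing remark is also on target and worth keeping in mind: in both your proof and the paper's, the entire gap between the weak notion defining $\Rmt_{(i,j)}\!L$ and full $(i,j)$-remoteness is carried by Proposition \ref{ijremote}, not by any direct argument that the scarcity of \emph{complemented} $j$-dense elements constrains arbitrary $j$-dense elements.
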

\begin{proof}
	$(1)\Rightarrow(2)$: Follows from Proposition \ref{ijremote}.
	
	$(2)\Rightarrow(3)$: Let $x\in L_{i}$ be $j$-dense and compemented in $L$. Then by (2), $L\cap\mathfrak{c}(x)=\mathsf{O}$, making $\mathfrak{c}(x)=\mathsf{O}$. Thus $x=1$.

	$(3)\Rightarrow(4)$: Since $1$ is the only $j$-dense element of $L_{i}$ which is complemented in $L$, we have that $0\vee x=1$ for every $j$-dense $x\in L_{i}$, making $0\in \Rmt_{(i,j)}\!L$.

	$(4)\Rightarrow(1)$: Follows since $\Rmt_{(i,j)}\!L$ is closed.
\end{proof}

We consider the following example.

\begin{example}
	Recall from \cite{F0} that the collection $\mathfrak{C}L$ of all congruences on a locale $L$ form a locale. The triple $(\mathfrak{C}L,\nabla_{L},\Delta_{L})$, where $\nabla_{L}=\{\nabla_{a}:a\in L\}$ and $\Delta_{L}$ is the subframe of $\mathfrak{C}L$ generated by $\{\Delta_{a}:a\in L\}$, is a bilocale called the \textit{congruence bilocale} of $L$. By \cite{FS1}, the congruence bilocale $(\mathfrak{C}L,\nabla_{L},\Delta_{L})$ of a locale $L$ satisfies the condition that each $\nabla_{a}\in \nabla_{L}$ is complemented in $\mathfrak{C}L$ and has a complement in $\Delta_{L}$. This tells us that the only $\Delta_{L}$-dense element of $\nabla_{L}$ that is also complemented in $\mathfrak{C}L$ is $1_{\mathfrak{C}L}$. By Proposition \ref{remequalL}, $\Rmt_{(i,j)}\!\mathfrak{C}L=\mathfrak{C}L$. %By \cite{PP1}, $\Rmt_{(i,j)}\!\mathfrak{C}L$ is remote if and only if $L$ is Boolean.  %$\Rmt_{(\nabla_{L},\Delta_{L})}\!\mathfrak{C}L$ is a sublocale of $\mathfrak{C}L$. This follows since every member of $\nabla_{L}$ is complemented in $\mathfrak{C}L$. %This can be proved using from Corollary \ref{remLBoo}. %Indeed, assume that $\Rmt_{(i,j)}\!L=L$ and choose $x\in L$. Then the dense $x\vee x^{*}$ is $j$-dense and belongs to $L_{i}$. But $\mathfrak{c}(0)=L=\Rmt_{(i,j)}\!L$ making $0\in\Rmt_{(i,j)}\!L$, so $1=x\vee x^{*}\vee 0=x\vee x^{*}$. Thus $L$ is Boolean. 
	
	%On the other hand, if $x\in L$ and $y\in L_{i}$ is $j$-dense, then using the fact that $L$ is Boolean which is $x \vee x^{*}=1$, we get that $y\vee x\vee x^{*}=x\vee (y\vee x^{*})=1$. But $y\vee x^{*}\in L_{i}$ and $j$-dense, so $x\in\Rmt_{(i,j)}\!L$

\end{example}

Recall from \cite{BB} that the triple $(\mathfrak{J}L,(\mathfrak{J}L)_{1},(\mathfrak{J}L)_{2})$, where $\mathfrak{J}L$ is the locale of all ideals of $L$ and $(\mathfrak{J}L)_{i}$ ($i=1,2$) is the subframe of $\mathfrak{J}L$ consisting of all ideals $J\subseteq L$ generated by $J\cap L_{i}$, is a bilocale called the \textit{ideal bilocale}. A locale is said to be \textit{Noetherian} if each of elements is compact.

We have the following result.

\begin{proposition}
In a bilocale $(L,L_{1},L_{2})$ with a Noetherian total part $L$,  $\Rmt_{(i,j)}\!L=L$ iff $\Rmt_{(i,j)}\!\mathfrak{J}L=\mathfrak{J}L$. 
\end{proposition}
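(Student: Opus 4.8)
The plan is to reduce the statement to Proposition~\ref{remequalL} by showing that, when $L$ is Noetherian, the ideal bilocale is isomorphic \emph{as a bilocale} to $(L,L_{1},L_{2})$ itself. Recall from Proposition~\ref{remequalL} that $\Rmt_{(i,j)}\!L=L$ holds precisely when $1$ is the only $j$-dense element of $L_{i}$ that is complemented in $L$; the same characterization applies verbatim to the ideal bilocale, so that $\Rmt_{(i,j)}\!\mathfrak{J}L=\mathfrak{J}L$ iff $1_{\mathfrak{J}L}$ is the only $j$-dense element of $(\mathfrak{J}L)_{i}$ complemented in $\mathfrak{J}L$. Since $j$-density is defined through the bilocale pseudocomplement $(-)^{\bullet}$ (Definition~\ref{idense}), which is built only from meets, joins and the subframes $L_{i},L_{j}$, a bilocale isomorphism carries $j$-dense complemented elements to $j$-dense complemented elements and the top to the top. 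Hence it suffices to produce a bilocale isomorphism $(L,L_{1},L_{2})\cong(\mathfrak{J}L,(\mathfrak{J}L)_{1},(\mathfrak{J}L)_{2})$.

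First I would exhibit the candidate isomorphism $\varphi\colon L\to\mathfrak{J}L$, $a\mapsto{\downarrow}a$, with inverse $J\mapsto\bigvee J$. The map $\bigvee\colon\mathfrak{J}L\to L$ always preserves arbitrary joins and finite meets, and because $L$ is Noetherian every ideal $J$ satisfies $\bigvee J\in J$, so $J={\downarrow}(\bigvee J)$ is principal; thus $\varphi$ and $J\mapsto\bigvee J$ are mutually inverse frame homomorphisms, and $\varphi$ is a frame isomorphism.

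The key step is to check that $\varphi$ restricts to an isomorphism $L_{i}\cong(\mathfrak{J}L)_{i}$ for $i=1,2$, that is, that ${\downarrow}a\in(\mathfrak{J}L)_{i}$ if and only if $a\in L_{i}$. If $a\in L_{i}$, then $a\in{\downarrow}a\cap L_{i}$, so the ideal generated by ${\downarrow}a\cap L_{i}$ is exactly ${\downarrow}a$, whence ${\downarrow}a\in(\mathfrak{J}L)_{i}$. Conversely, if ${\downarrow}a$ is generated by ${\downarrow}a\cap L_{i}$, then $a=\bigvee({\downarrow}a\cap L_{i})$, and compactness of $a$ yields $a\leq c_{1}\vee\cdots\vee c_{n}$ with each $c_{k}\in L_{i}$ and $c_{k}\leq a$; since $L_{i}$ is a subframe this forces $a=c_{1}\vee\cdots\vee c_{n}\in L_{i}$. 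This is where the Noetherian hypothesis does its real work, and I expect it to be the main obstacle: without compactness the supremum $\bigvee({\downarrow}a\cap L_{i})$ need not be attained and $(\mathfrak{J}L)_{i}$ can be strictly larger than the $\varphi$-image of $L_{i}$, so the two bilocales need not agree.

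Having established the bilocale isomorphism, I would conclude by transporting the characterization of Proposition~\ref{remequalL}(3) along $\varphi$: the induced bijection between $j$-dense complemented elements of $L_{i}$ and those of $(\mathfrak{J}L)_{i}$ sends $1_{L}$ to $1_{\mathfrak{J}L}$, so one family reduces to the singleton $\{1_{L}\}$ exactly when the other reduces to $\{1_{\mathfrak{J}L}\}$. Invoking the equivalence $(1)\Leftrightarrow(3)$ of Proposition~\ref{remequalL} for both $L$ and $\mathfrak{J}L$ then gives $\Rmt_{(i,j)}\!L=L$ if and only if $\Rmt_{(i,j)}\!\mathfrak{J}L=\mathfrak{J}L$, as required.
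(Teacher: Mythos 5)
Your proof is correct, and it shares the paper's skeleton --- both arguments reduce everything to Proposition \ref{remequalL}(3) and both rest on the fact that in a Noetherian locale every ideal is principal --- but your decomposition is genuinely different. The paper never assembles the assignment $a\mapsto{\downarrow}a$ into an isomorphism; it runs two separate, asymmetric, element-wise arguments: given a $(\mathfrak{J}L)_{j}$-dense $I\in(\mathfrak{J}L)_{i}$ complemented in $\mathfrak{J}L$ it checks by hand that $\bigvee I$ lands in $L_{i}$ and is $j$-dense and complemented in $L$, and conversely that ${\downarrow}x$ inherits all three properties from such an $x\in L_{i}$. You instead prove a single stronger, symmetric statement: for Noetherian $L$ the map $\varphi(a)={\downarrow}a$ is an isomorphism of bilocales $(L,L_{1},L_{2})\cong(\mathfrak{J}L,(\mathfrak{J}L)_{1},(\mathfrak{J}L)_{2})$, after which the two-way transfer of $j$-density, complementedness and the top is automatic, since all three notions are defined from the bilocale structure alone. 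Your route buys a cleaner, reusable intermediate fact and dispatches both directions at once; what it hides is precisely the hand verification the paper displays (those checks reappear, in disguise, inside the routine claim that bilocale isomorphisms transport $(-)^{\bullet}$ and complements). Your identification $\varphi(L_{i})=(\mathfrak{J}L)_{i}$ is the crucial step and is argued correctly; note only that its converse half does not actually need compactness of $a$: membership of $a$ in the ideal generated by ${\downarrow}a\cap L_{i}$ already yields $a\leq c_{1}\vee\cdots\vee c_{n}$ with $c_{k}\in{\downarrow}a\cap L_{i}$ by the definition of generated ideal, so the Noetherian hypothesis is genuinely needed only to make every ideal principal, i.e., for surjectivity of $\varphi$ onto $\mathfrak{J}L$ and hence of its restriction onto $(\mathfrak{J}L)_{i}$.
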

\begin{proof}
	Suppose that $\Rmt_{(i,j)}\!L=L$. We show that $1_{\mathfrak{J}L}$ is the only $(\mathfrak{J}L)_{j}$-dense element of $(\mathfrak{J}L)_{i}$ that is complemented in $\mathfrak{J}L$. Let $I\in (\mathfrak{J}L)_{i}$ be $(\mathfrak{J}L)_{2}$-dense and complemented in $\mathfrak{J}L$. Since in a Noetherian locale every ideal is principal, ${\downarrow}{\bigvee\!{I}}=I$, making $\bigvee\!{I}\in I$. Therefore $\bigvee\!{J}\leq x$ for some $x\in L_{i}\cap J$, making $\bigvee\!{J}=x\in L_{i}$. The fact that $I$ is complemented in $\mathfrak{J}L$ implies that there is $J\in \mathfrak{J}L$ such that $J\cap I=\mathsf{O}_{\mathfrak{J}L}$ and $J\vee I=1_{\mathfrak{J}L}$. We get that $\mathsf{O}_{\mathfrak{J}L}=J\cap I=({\downarrow}\bigvee\!J)\cap ({\downarrow}\bigvee\!I)$ and $1_{\mathfrak{J}L}=J\vee I=({\downarrow}\bigvee\!J)\vee ({\downarrow}\bigvee\!I)$. Therefore $(\bigvee\!J)\wedge (\bigvee\!I)=0_{L}$ and $(\bigvee\!J)\vee (\bigvee\!I)=1_{L}$, making $\bigvee\!I$ complemented in $L$. For $j$-density of $\bigvee\!I$, choose $a\in L_{j}$ such that $a\wedge\bigvee\!I=0_{L}$. Then $\mathsf{O}=({\downarrow}a)\cap({\downarrow}\bigvee\!I)={\downarrow}a\cap I$. Since ${\downarrow}a\in (\mathfrak{J}L)_{j}$ and $I$ is $(\mathfrak{J}L)_{j}$-dense, ${\downarrow}a=0_{\mathfrak{J}L}$. Therefore $a=0_{L}$, making $\bigvee\!I$ $j$-dense. Since, by Proposition \ref{remequalL}, $1_{L}$ is the only $j$-dense element of $L_{i}$ which is complemented in $L$, $\bigvee\!I=1$ so that $1_{\mathfrak{J}L}={\downarrow}\bigvee\!I=I$. By Proposition \ref{remequalL}, $\Rmt_{(i,j)}\!\mathfrak{J}L=\mathfrak{J}L$.
	
	On the other hand, assume that $\Rmt_{(i,j)}\!\mathfrak{J}L=\mathfrak{J}L$ and let $x\in L_{i}$ be a $j$-dense element of $L_{i}$ which is complemented in $L$. Since $a\leq x\in L_{i}\cap {\downarrow}{x}$ for each $a\in {\downarrow}{x}$ and ${\downarrow}{x}\in \mathfrak{J}L$, ${\downarrow}{x}\in (\mathfrak{J}L)_{i}$. For $j$-density of ${\downarrow}{x}$, let $J\in (\mathfrak{J}L)_{2}$ be such that $J\wedge {\downarrow}{x}=0_{\mathfrak{J}L}$. If $a\in J$, then $a\leq b$ for some $b\in J\cap L_{j}$. But $(\bigwedge J)\wedge x=0$ and $b\leq \bigvee J$, so $b\wedge x=0$ making $b=0$ since $x$ is $j$-dense. Therefore $a=0$. Thus $J=0_{\mathfrak{J}L}$ so that ${\downarrow}{x}$ is $(\mathfrak{J}L)_{j}$-dense. Because $x$ is complemented in $L$, there is $a\in L$ such that $a\vee x=1_{L}$ and $a\wedge x=0_{L}$. Therefore $({\downarrow}{a})\vee ({\downarrow}{x})=1_{\mathfrak{J}L}$ and $({\downarrow}{a})\cap ({\downarrow}{x})=0_{\mathfrak{J}L}$. Thus ${\downarrow}{x}$ is complemented in $\mathfrak{J}L$. It follows from Proposition \ref{remequalL} that ${\downarrow}{x}=1_{\mathfrak{J}L}$. Therefore $x=1_{L}$, making $\Rmt_{(i,j)}\!L=L$ by Proposition \ref{remequalL}.
\end{proof}

 For the rest of this section, we discuss $\Rmt_{(i,j)}$ as a functor and consider some other functors associated with it. 

Denote by $\BiLoc$ the category of bilocales whose morphisms are bilocalic maps.

By a \textit{weakly closed biframe homomorphism} we mean a biframe map whose total part is weakly closed. 

\begin{proposition}\label{restriction}
	Let $(L,L_{1},L_{2}),(M,M_{1},M_{2})\in\Obj(\BiLoc)$ and $f:(L,L_{1},L_{2})\rightarrow(M,M_{1},M_{2})$ be a bilocalic map such that $f^{*}:(M,M_{1},M_{2})\rightarrow(L,L_{1},L_{2})$ is weakly closed and sends $j$-dense elements to $j$-dense elements. Then $f_{|\!\Rmt_{(i,j)}\!L}[\Rmt_{(i,j)}\!L]\subseteq \Rmt_{(i,j)}M$.
\end{proposition}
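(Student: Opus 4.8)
The plan is to use that both $\Rmt_{(i,j)}\!L$ and $\Rmt_{(i,j)}\!M$ are closed sublocales (Proposition~\ref{remclosed}); write $\Rmt_{(i,j)}\!L=\mathfrak{c}(r_{L})$ with $r_{L}=\bigwedge\Rmt_{(i,j)}\!L\in\Rmt_{(i,j)}\!L$, and $\Rmt_{(i,j)}\!M=\mathfrak{c}(r_{M})$. The hypothesis that the total part $f^{*}\colon M\to L$ is weakly closed is used through its right adjoint, the localic map $f\colon L\to M$, for which weak closedness means exactly that $f[\mathfrak{c}(a)]=\mathfrak{c}(f(a))$ for every $a\in L$; I would record this characterization first. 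Applying it to $a=r_{L}$ gives $f_{|\Rmt_{(i,j)}\!L}[\Rmt_{(i,j)}\!L]=f[\mathfrak{c}(r_{L})]=\mathfrak{c}(f(r_{L}))$. Since $\mathfrak{c}(f(r_{L}))=\mathord{\uparrow}f(r_{L})$ and $\Rmt_{(i,j)}\!M=\mathord{\uparrow}r_{M}$, the claim reduces to the single membership $f(r_{L})\in\Rmt_{(i,j)}\!M$, equivalently (by Proposition~\ref{bidense}) to $f(r_{L})\vee b=1_{M}$ for every complemented $j$-dense $b\in M_{i}$.

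So fix a complemented $j$-dense $b\in M_{i}$. First I would verify that $f^{*}(b)$ is again a complemented $j$-dense element of $L_{i}$: it lies in $L_{i}$ because $f^{*}$ is a biframe map, it is complemented because the frame homomorphism $f^{*}$ preserves $0$, $1$ and finite meets and joins, and it is $j$-dense by the standing assumption that $f^{*}$ sends $j$-dense elements to $j$-dense elements. As $r_{L}\in\Rmt_{(i,j)}\!L$, the description $\Rmt_{(i,j)}\!L=\{x\in L:x\vee a=1\text{ for each complemented }j\text{-dense }a\in L_{i}\}$ yields $r_{L}\vee f^{*}(b)=1_{L}$, i.e. $\mathfrak{c}(r_{L})\subseteq\mathfrak{o}(f^{*}(b))$. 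Using the preimage formula $\mathfrak{o}(f^{*}(b))=f_{-1}[\mathfrak{o}(b)]$ from the preliminaries, this says $\mathfrak{c}(r_{L})\subseteq f_{-1}[\mathfrak{o}(b)]$, whence the Galois adjunction $f[-]\dashv f_{-1}[-]$ between localic images and preimages (see \cite{PP1}) gives $f[\mathfrak{c}(r_{L})]\subseteq\mathfrak{o}(b)$. Replacing $f[\mathfrak{c}(r_{L})]$ by $\mathfrak{c}(f(r_{L}))$ via weak closedness, I get $\mathfrak{c}(f(r_{L}))\subseteq\mathfrak{o}(b)$, that is $\mathfrak{c}(f(r_{L})\vee b)=\mathfrak{c}(f(r_{L}))\cap\mathfrak{c}(b)=\mathsf{O}$, so $f(r_{L})\vee b=1_{M}$, as required.

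I expect the real difficulty to be conceptual rather than computational, namely pinning down where weak closedness is indispensable. The identity $r_{L}\vee f^{*}(b)=1_{L}$ already shows, via Theorem~\ref{remote subbilocale}, that the set-theoretic image $f[\Rmt_{(i,j)}\!L]$ meets every $\mathfrak{c}(b)$ (with $b$ complemented $j$-dense) in $\mathsf{O}$, i.e. that it is \emph{weakly $(i,j)$-remote}; but a weakly $(i,j)$-remote sublocale need not be contained in the particular closed sublocale $\Rmt_{(i,j)}\!M$, which is in general strictly smaller than the largest weakly $(i,j)$-remote sublocale. What saves the argument is that weak closedness forces the image to be the \emph{closed} sublocale $\mathfrak{c}(f(r_{L}))$: a closed weakly $(i,j)$-remote sublocale $\mathfrak{c}(p)$ satisfies $p\in\Rmt_{(i,j)}\!M$ by the very definition of $\Rmt_{(i,j)}\!M$, and hence $\mathfrak{c}(p)\subseteq\Rmt_{(i,j)}\!M$. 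Thus the only genuinely delicate step is recognizing that closedness of the image, supplied precisely by the hypothesis on $f^{*}$, is what lets one pass from \textquotedblleft weakly $(i,j)$-remote\textquotedblright\ to \textquotedblleft inside $\Rmt_{(i,j)}\!M$\textquotedblright.
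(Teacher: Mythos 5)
Your computational skeleton---pull a complemented $j$-dense $b\in M_{i}$ back along $f^{*}$, use membership in $\Rmt_{(i,j)}\!L$ to get a join equal to $1_{L}$, then transfer that back to $M$---is the same as the paper's, which argues elementwise: for $x\in\Rmt_{(i,j)}\!L$ and complemented $j$-dense $y\in M_{i}$, the element $f^{*}(y)\in L_{i}$ is complemented $j$-dense, so $f^{*}(y)\vee x=1_{L}$, and weak closedness then yields $y\vee f(x)=1_{M}$, i.e.\ $f(x)\in\Rmt_{(i,j)}\!M$; your extra reduction to the generator $r_{L}$ is a harmless detour. The genuine problem is the lemma you say you would record first: weak closedness of $f^{*}$ does \emph{not} mean that $f[\mathfrak{c}(a)]=\mathfrak{c}(f(a))$ for every $a\in L$. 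That identity characterizes \emph{closed} localic maps, equivalently the full Frobenius identity $f(x\vee f^{*}(y))=f(x)\vee y$. Weak closedness is strictly weaker: it is only the ``top instance'' of Frobenius, namely the implication that $f^{*}(y)\vee x=1_{L}$ forces $y\vee f(x)=1_{M}$, and that implication is exactly how the paper's one-line proof uses the hypothesis. As written, your argument proves the proposition under a strictly stronger hypothesis than the one stated, and it invokes the false characterization in two places (the initial reduction, and the final ``replacing $f[\mathfrak{c}(r_{L})]$ by $\mathfrak{c}(f(r_{L}))$'' step).

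The gap is repairable, and the repair exposes a second error in your closing paragraph. For the reduction you only need the inclusion $f[\mathfrak{c}(r_{L})]\subseteq\mathfrak{c}(f(r_{L}))$, which holds for \emph{every} localic map by monotonicity; and in the last step no conversion is needed at all, since $f(r_{L})\in f[\mathfrak{c}(r_{L})]\subseteq\mathfrak{o}(b)=\mathfrak{c}(b')$ (where $b'$ is the complement of $b$; note $\mathfrak{c}(b')$ is a complement of $\mathfrak{c}(b)$ in $\mathcal{S}(M)$, hence equals $\mathfrak{o}(b)$) already gives $f(r_{L})\vee b\geq b'\vee b=1_{M}$. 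Observe that this repaired argument uses only the adjunctions $f^{*}\dashv f$ and $f[-]\dashv f_{-1}[-]$ and never weak closedness. Relatedly, the claim your final paragraph rests on---that a weakly $(i,j)$-remote sublocale need not be contained in $\Rmt_{(i,j)}\!M$---is wrong: if $S$ is weakly $(i,j)$-remote and $x\in S$, then for every complemented $j$-dense $b\in M_{i}$ we have $x\in S\subseteq\mathfrak{o}(b)=\mathfrak{c}(b')$, so $x\vee b=1_{M}$, and hence $x\in\Rmt_{(i,j)}\!M$ by the paper's description of that set; thus $\Rmt_{(i,j)}\!M$ is precisely the largest weakly $(i,j)$-remote sublocale. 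Consequently the route you dismiss---your own observation that the adjunction alone makes $f[\Rmt_{(i,j)}\!L]$ weakly $(i,j)$-remote---already finishes the proof; closedness of the image is not where the content lies.
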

\begin{proof}
	Choose $x\in \Rmt_{(i,j)}\!L$ and let $y\in M_{i}$ be $j$-dense. Since $f^{*}$ sends $j$-dense elements to $j$-dense elements, $f^{*}(y)\in L_{i}$ is $j$-dense. Therefore $f^{*}(y)\vee x=1$. But $f^{*}$ is weakly closed so $y\vee f(x)=1$. Thus $f(x)\in \Rmt_{(i,j)}M$.
\end{proof}
The following corollary is an immediate result of Proposition \ref{restriction}.
\begin{corollary}\label{restrictioncor}
	Let $(L,L_{1},L_{2}),(M,M_{1},M_{2})\in\Obj(\BiLoc)$ and $f:(L,L_{1},L_{2})\rightarrow(M,M_{1},M_{2})$ be a bilocalic map such that $f^{*}:(M,M_{1},M_{2})\rightarrow(L,L_{1},L_{2})$ is weakly closed and sends $j$-dense elements to $j$-dense elements. Then $$f_{|\!\Rmt_{(i,j)}\!L}:\Rmt_{(i,j)}\!L\rightarrow \Rmt_{(i,j)}M$$ is a localic map.
\end{corollary}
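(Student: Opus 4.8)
The plan is to exhibit $f_{|\!\Rmt_{(i,j)}\!L}$ as a composite of localic maps, so that being localic comes for free from the fact that localic maps compose. The ingredients I would invoke, all standard facts about sublocales (see \cite{PP1}), are: the inclusion $S\hookrightarrow L$ of any sublocale into its ambient locale is a localic map; localic maps are closed under composition; and the set-theoretic image $f[S]$ of a sublocale $S\in\mathcal{S}(L)$ under a localic map $f\colon L\to M$ is again a sublocale of $M$, with the corestriction $S\to f[S]$ localic. None of these needs to be reproved here.

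With these in hand the argument is short. By Proposition \ref{remclosed} both $\Rmt_{(i,j)}\!L$ and $\Rmt_{(i,j)}M$ are (closed) sublocales, hence locales in their own right. The inclusion $\iota\colon\Rmt_{(i,j)}\!L\hookrightarrow L$ is localic, so the composite $f\comp\iota\colon\Rmt_{(i,j)}\!L\to M$ is localic; as a set map this composite is precisely the restriction of the total part $f$ to $\Rmt_{(i,j)}\!L$. Its image is the sublocale $f[\Rmt_{(i,j)}\!L]$ of $M$, and Proposition \ref{restriction} gives the crucial containment $f[\Rmt_{(i,j)}\!L]\subseteq\Rmt_{(i,j)}M$.

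It then remains only to corestrict the codomain. Since $f[\Rmt_{(i,j)}\!L]$ is a sublocale of $M$ sitting inside the sublocale $\Rmt_{(i,j)}M$, it is in particular a sublocale of $\Rmt_{(i,j)}M$, so the inclusion $f[\Rmt_{(i,j)}\!L]\hookrightarrow\Rmt_{(i,j)}M$ is localic. Composing the corestriction $\Rmt_{(i,j)}\!L\to f[\Rmt_{(i,j)}\!L]$ with this inclusion produces the desired localic map $f_{|\!\Rmt_{(i,j)}\!L}\colon\Rmt_{(i,j)}\!L\to\Rmt_{(i,j)}M$. The only genuine point is the corestriction step, namely that restricting a localic map to a sublocale that contains its image preserves localness; but this is immediate from the three quoted facts (the image is a sublocale, the corestriction onto the image is localic, and sublocale inclusions are localic), so there is no real obstacle — which is exactly why the statement is recorded as an immediate corollary of Proposition \ref{restriction}.
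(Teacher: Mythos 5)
Your proposal is correct and takes essentially the same route as the paper, which records the statement as an immediate consequence of Proposition \ref{restriction}: the containment $f[\Rmt_{(i,j)}\!L]\subseteq\Rmt_{(i,j)}\!M$ supplied by that proposition, combined with the standard facts that sublocale inclusions are localic, that a localic map corestricts to a localic surjection onto its image, and that localic maps compose. Your write-up simply makes explicit the factorization that the paper leaves implicit in the word ``immediate.''
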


\begin{definition}
	Call a bilocalic map $f:(L,L_{1},L_{2})\rightarrow(M,M_{1},M_{2})$ a \textit{$\Rem_{(i,j)}$-map} if $f[\Rmt_{(i,j)}\!L]\subseteq \Rmt_{(i,j)}\!M$.
\end{definition}

\begin{example}
	The bilocalic maps described in the statement of Corollary \ref{restrictioncor} are $\Rem_{(i,j)}$-maps.
\end{example}

%\section{The functor $\Rmt_{(i,j)}$}

%This section is mainly about the functor $\Rmt_{(i,j)}$. Most of the discussed results hold for the cases of $\Rmt_{AB}$ and $\Rmt_{SB}$. %The cases of $\Rmt_{AB}$ and $\Rmt_{SB}$ will sometimes be stated as comments.

Denote by $\BiLoc_{R(i,j)}$ the subcategory of $\BiLoc$ whose morphisms are $\Rem_{(i,j)}$-maps.

There is a functor between $\BiLoc_{R(i,j)}$ and $\Loc$, as one checks routinely.

\begin{theorem}\label{functorRem}
	The assignment \[\Rmt_{(i,j)}:\BiLoc_{R(i,j)}\rightarrow\Loc,\]
	\[(L,L_{1},L_{2})\mapsto\Rmt_{(i,j)}\!L,\]\[\Rmt_{(i,j)}(f:(L,L_{1},L_{2})\rightarrow(M,M_{1},M_{2}))=f_{|\!\Rmt_{(i,j)}\!L}\]%\[f|_{(\Rmt_{(i,j)}\!L,\Rmt_{(i,j)}\!L,\Rmt_{(i,j)}\!L)}:(\Rmt_{(i,j)}\!L,\Rmt_{(i,j)}\!L,\Rmt_{(i,j)}\!L)\rightarrow (\Rmt_{(i,j)}\!M,\Rmt_{(i,j)}\!M,\Rmt_{(i,j)}\!M)\]
	is a functor.
\end{theorem}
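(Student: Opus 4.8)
The plan is to check the three defining properties of a functor: well-definedness on objects, well-definedness on morphisms, and preservation of identities and composites. On objects there is nothing to prove beyond Proposition \ref{remclosed}: it tells us that $\Rmt_{(i,j)}L$ is a closed sublocale of $L$, hence a locale, and therefore an object of $\Loc$.

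The substance lies in the morphism part. Given a $\Rem_{(i,j)}$-map $f:(L,L_{1},L_{2})\rightarrow(M,M_{1},M_{2})$, its total part is a localic map $f:L\rightarrow M$ with $f[\Rmt_{(i,j)}L]\subseteq\Rmt_{(i,j)}M$; I would show that the set-restriction $f_{|\Rmt_{(i,j)}L}$ is a localic map $\Rmt_{(i,j)}L\rightarrow\Rmt_{(i,j)}M$. For this I would invoke the standard behaviour of localic maps under passage to sublocales: the image $f[\Rmt_{(i,j)}L]$ is itself a sublocale of $M$ and the corestriction $\Rmt_{(i,j)}L\rightarrow f[\Rmt_{(i,j)}L]$ is localic. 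Since $f[\Rmt_{(i,j)}L]\subseteq\Rmt_{(i,j)}M$, it is a sublocale of $\Rmt_{(i,j)}M$, and the inclusion $f[\Rmt_{(i,j)}L]\hookrightarrow\Rmt_{(i,j)}M$ is localic; composing the two localic maps yields $f_{|\Rmt_{(i,j)}L}$, which is thus localic and equals $\Rmt_{(i,j)}(f)$. I would stress here that Corollary \ref{restrictioncor} does \emph{not} apply, as it assumes the stronger conditions that $f^{*}$ be weakly closed and preserve $j$-dense elements, whereas a $\Rem_{(i,j)}$-map only guarantees the image containment $f[\Rmt_{(i,j)}L]\subseteq\Rmt_{(i,j)}M$.

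Finally I would verify functoriality, which is formal once localicity is settled. The restriction of the identity bilocalic map is plainly the identity localic map on $\Rmt_{(i,j)}L$. For composition, I would first observe that $\BiLoc_{R(i,j)}$ is closed under it, since $(g\circ f)[\Rmt_{(i,j)}L]=g\big[f[\Rmt_{(i,j)}L]\big]\subseteq g[\Rmt_{(i,j)}M]\subseteq\Rmt_{(i,j)}N$ for composable $\Rem_{(i,j)}$-maps $f$ and $g$; and because $f[\Rmt_{(i,j)}L]$ lands inside $\Rmt_{(i,j)}M$, restriction commutes with composition at the level of the underlying set maps, giving $(g\circ f)_{|\Rmt_{(i,j)}L}=g_{|\Rmt_{(i,j)}M}\circ f_{|\Rmt_{(i,j)}L}$. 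The only genuine obstacle is therefore the localicity of the restriction in the morphism step; everything else reduces to set-theoretic bookkeeping together with the already-established closedness of $\Rmt_{(i,j)}L$ as a sublocale.
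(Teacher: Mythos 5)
Your proof is correct. The paper gives no argument for Theorem \ref{functorRem} (it is stated ``as one checks routinely''), and your write-up is precisely that routine verification; in particular, you correctly isolate the one non-formal point---localicity of $f_{|\!\Rmt_{(i,j)}\!L}$ for a general $\Rem_{(i,j)}$-map, where Corollary \ref{restrictioncor} is indeed unavailable---and settle it by the standard surjection--embedding (image) factorization of localic maps from \cite{PP1}, with the closure of $\BiLoc_{R(i,j)}$ under composition and the identity/composition checks being the set-theoretic bookkeeping you describe.
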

%Using the fact that each $\Rmt_{(i,j)}\!L\in \Obj(\BooLoc)$ for every $(L,L_{1},L_{2})\in\Obj(\BCFLoc)$, we have the following result. 
%\begin{proposition}
%The assignment $\Rmt_{(i,j)}':\BCFLoc_{R}\rightarrow \BooLoc$ mapping as $\Rmt_{(i,j)}$ is a functor.
%\end{proposition}

%The functor $\Rmt$ is not an embedding.
%Set \[f|_{(\Rmt_{(i,j)}\!L,\Rmt_{(i,j)}\!L,\Rmt_{(i,j)}\!L)}:(\Rmt_{(i,j)}\!L,\Rmt_{(i,j)}\!L,\Rmt_{(i,j)}\!L)\rightarrow (\Rmt_{(i,j)}\!M,\Rmt_{(i,j)}\!M,\Rmt_{(i,j)}\!M)=\]

Recall from \cite{BBH} that there is a faithful functor $U:\BiFrm\rightarrow\Frm$ which takes the total part. It is clear that there is also a faithful functor $F:\BiLoc\rightarrow \Loc$ behaving the same as the functor $U$. Consider the inclusion functor $E_{\BiLoc_{R(i,j)}}:\BiLoc_{R(i,j)}\hookrightarrow \BiLoc$. One can easily see that the functor $G=F\circ E_{\BiLoc_{R(i,j)}}:\BiLoc_{R(i,j)}\rightarrow \Loc$ is faithful.

In what follows, we show that there is a natural transformation between $\Rmt_{(i,j)}$ and $G$.
\begin{proposition}\label{natural}
	There is a natural transformation $\eta:\Rmt_{(i,j)}\rightarrow G$.
\end{proposition}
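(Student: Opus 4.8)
The plan is to let the component of $\eta$ at an object $(L,L_{1},L_{2})$ be the sublocale embedding of $\Rmt_{(i,j)}\!L$ into $L$. By Proposition \ref{remclosed}, $\Rmt_{(i,j)}\!L$ is a (closed) sublocale of $L$, so the inclusion $\iota_{L}\colon\Rmt_{(i,j)}\!L\hookrightarrow L$ is a localic map, its associated frame homomorphism being the surjection $\nu_{\Rmt_{(i,j)}\!L}\colon L\to\Rmt_{(i,j)}\!L$. Since $\Rmt_{(i,j)}(L,L_{1},L_{2})=\Rmt_{(i,j)}\!L$ and $G(L,L_{1},L_{2})=L$, the arrow $\iota_{L}$ is precisely a morphism in $\Loc$ from $\Rmt_{(i,j)}(L,L_{1},L_{2})$ to $G(L,L_{1},L_{2})$. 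I therefore set $\eta_{(L,L_{1},L_{2})}=\iota_{L}$.

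It remains to check naturality. Given a morphism $f\colon(L,L_{1},L_{2})\to(M,M_{1},M_{2})$ in $\BiLoc_{R(i,j)}$, that is, a $\Rem_{(i,j)}$-map, I must show that the square
\[
\begin{tikzcd}
\Rmt_{(i,j)}\!L \arrow[r, "\iota_{L}"] \arrow[d, "\Rmt_{(i,j)}(f)"'] & L \arrow[d, "G(f)"] \\
\Rmt_{(i,j)}\!M \arrow[r, "\iota_{M}"'] & M
\end{tikzcd}
\]
commutes in $\Loc$. Here $G(f)=f$ is the total part of $f$, while by Theorem \ref{functorRem} the arrow $\Rmt_{(i,j)}(f)=f_{|\!\Rmt_{(i,j)}\!L}$ is the corestriction of that same total part to the sublocale $\Rmt_{(i,j)}\!L$; it lands in $\Rmt_{(i,j)}\!M$ exactly because $f$ is a $\Rem_{(i,j)}$-map, i.e. $f[\Rmt_{(i,j)}\!L]\subseteq\Rmt_{(i,j)}\!M$. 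Consequently, for every $x\in\Rmt_{(i,j)}\!L$ the two composites agree on underlying functions: the top-right path gives $G(f)(\iota_{L}(x))=f(x)$, and the left-bottom path gives $\iota_{M}(\Rmt_{(i,j)}(f)(x))=\iota_{M}(f(x))=f(x)$. As a localic map is determined by its underlying function, the square commutes, and $\eta$ is a natural transformation.

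The argument is essentially formal; the single point requiring care is the identification of $\Rmt_{(i,j)}(f)$ with the genuine set-theoretic restriction of the total part $G(f)=f$, so that postcomposing it with the embedding $\iota_{M}$ returns $f$ itself on the underlying set of $\Rmt_{(i,j)}\!L$. This identification is furnished by the definition of the functor $\Rmt_{(i,j)}$ in Theorem \ref{functorRem} together with the $\Rem_{(i,j)}$-map hypothesis, after which commutativity of the square is immediate. I thus anticipate no substantive obstacle beyond making this bookkeeping precise.
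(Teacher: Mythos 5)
Your proposal is correct and is essentially identical to the paper's own proof: both take the component at $(L,L_{1},L_{2})$ to be the sublocale inclusion $\Rmt_{(i,j)}\!L\hookrightarrow L$ (justified by Proposition~\ref{remclosed}) and verify naturality by the same elementwise computation, using that $\Rmt_{(i,j)}(f)=f_{|\!\Rmt_{(i,j)}\!L}$ lands in $\Rmt_{(i,j)}\!M$ because $f$ is a $\Rem_{(i,j)}$-map. No gaps; the bookkeeping you flag is exactly what the paper's displayed chain of equalities carries out.
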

\begin{proof}
	Let $(L,L_{1},L_{2})\in \BiLoc_{R(i,j)}$ and $\eta_{(L,L_{1},L_{2})}$ be the map $j_{\Rmt_{(i,j)}\!L}:\Rmt_{(i,j)}\!L\rightarrow L$. The map $\eta_{(L,L_{1},L_{2})}$ is clearly a localic map. Now, choose $f:(L,L_{1},L_{2})\rightarrow(M,M_{1},M_{2})\in \Morp(\BiLoc_{R(i,j)})$. Then the diagram 
	\begin{equation}
		\begin{tikzcd}
			{\Rmt_{(i,j)}\!L} \arrow{dd}[swap]{\Rmt_{(i,j)}(f)} \arrow{rrr}{\eta_{(L,L_{1},L_{2})}}&& & {L} \arrow{dd}{G(f)}&\\
			&& & && & \\
			{\Rmt_{(i,j)}\!M} \arrow{rrr}[swap]{\eta_{(M,M_{1},M_{2})}} && & {M} &
		\end{tikzcd}
	\end{equation}commutes: For each $x\in \Rmt_{(i,j)}\!L$, $$G(f)(\eta_{(L,L_{1},L_{2})}(x))=G(f)(x)=f(x)=\eta_{(M,M_{1},M_{2})}(f(x))=\eta_{(M,M_{1},M_{2})}(\Rmt_{(i,j)}(f)(x))$$ which proves the result.
\end{proof}

%\begin{obs}
	%If, in Corollary \ref{restrictioncor}, we consider the category $\RBiLoc_{(i,j)})$ instead of $\BiLoc$, we get that $f_{|\!\Rmt_{(i,j)}\!L}$ is a closed localic map. This follows since remote sublocales are Boolean algebras and any localic map with a Boolean codomain is closed.
%\end{obs}

Denote by:
\begin{enumerate}
	\item $\RBiLoc_{R(i,j)}$ the full subcategory of $\BiLoc_{R(i,j)}$ whose objects are bilocales $(L,L_{1},L_{2})$ in which $\Rmt_{(i,j)}\!L$ is a remote sublocale of $L$,
	\item $\RBiLoc_{RB(i,j)}$ the full subcategory of $\RBiLoc_{R(i,j)}$ whose objects are bilocales $(L,L_{1},L_{2})$ with $1$ the only $j$-dense element of $L_{i}$, and
	\item $E_{\RBiLoc_{RB(i,j)}}$ the inclusion functor $\RBiLoc_{RB(i,j)}\subseteq\RBiLoc_{R(i,j)}$. 
\end{enumerate} 

\begin{obs}
If, in Corollary \ref{restrictioncor}, we consider the category $\RBiLoc_{R(i,j)}$ instead of $\BiLoc$, we get that $f_{|\!\Rmt_{(i,j)}\!L}$ is a closed localic map. This follows since remote sublocales are Boolean algebras and any localic map with a Boolean codomain is closed.
\end{obs}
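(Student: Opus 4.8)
The plan is to read the statement as a strengthening of Corollary \ref{restrictioncor}: we retain all of its hypotheses on $f$—so that $f_{|\!\Rmt_{(i,j)}\!L}:\Rmt_{(i,j)}\!L\rightarrow\Rmt_{(i,j)}\!M$ is already known to be a well-defined localic map—and merely add that the bilocales involved are objects of $\RBiLoc_{R(i,j)}$. Consequently the only work left is to upgrade ``localic map'' to ``closed localic map,'' and all of that work concerns the codomain $\Rmt_{(i,j)}\!M$.

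First I would observe that, by the defining condition of $\RBiLoc_{R(i,j)}$, the sublocale $\Rmt_{(i,j)}\!M$ is remote in $M$. Since $\mathfrak{B}M$ is the largest remote sublocale of $M$, this yields $\Rmt_{(i,j)}\!M\subseteq\mathfrak{B}M$; as $\mathfrak{B}M$ is a Boolean locale and every sublocale of a Boolean locale is again Boolean (the fact already invoked in Example \ref{exaBLij}(5)), I conclude that the codomain $\Rmt_{(i,j)}\!M$ is itself a Boolean locale.

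Next I would supply the lemma that any localic map whose codomain is Boolean is closed. The cleanest route is to use the characterization of Boolean locales as exactly those in which every sublocale is closed (equivalently, open). Granting this, closedness of $f_{|\!\Rmt_{(i,j)}\!L}$ reduces to checking that the image of each closed sublocale is closed; but the set-theoretic image of any sublocale under a localic map is again a sublocale (as recorded in Subsection \ref{subsect12}), and every sublocale of the Boolean locale $\Rmt_{(i,j)}\!M$ is closed, so the image of any closed sublocale is automatically closed. Hence $f_{|\!\Rmt_{(i,j)}\!L}$ is closed, and combining this with Corollary \ref{restrictioncor} finishes the argument.

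The main obstacle is precisely this intermediate lemma, and within it the characterization ``$B$ is Boolean if and only if every sublocale of $B$ is closed,'' which I would cite from the standard literature on frames and locales. The one point requiring care is to phrase closedness of a localic map through images of closed sublocales—so that the ``every sublocale is closed'' property of a Boolean codomain applies directly—rather than through the Frobenius-type identity $f(a\vee h(b))=f(a)\vee b$ recorded in Example \ref{booleanbi}(6); once the two formulations are reconciled, assembling the pieces is immediate.
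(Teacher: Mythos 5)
Your proposal is correct and takes essentially the same route as the paper's own justification: the codomain $\Rmt_{(i,j)}\!M$ is remote by the defining condition of $\RBiLoc_{R(i,j)}$, hence contained in the largest remote sublocale $\mathfrak{B}M$ and therefore Boolean, and any localic map with Boolean codomain is closed. Your additional detail—that every sublocale of a Boolean locale is closed, that localic images of sublocales are sublocales, and that this formulation of closedness agrees with the Frobenius-type identity—simply fills in the two facts the paper invokes without proof.
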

Set  $\Rmt_{(i,j)}\circ E_{\RBiLoc_{R(i,j)}}=\Rmt_{RB}$ and $\hat{G}=G\circ E_{\RBiLoc_{RB(i,j)}}$. Since $\hat{G}$ is a composition of two faithful functors, it is faithful. We prove faithfulness of $\Rmt_{RB(i,j)}$.
\begin{proposition}\label{functorRemRB}
	The functor $\Rmt_{RB(i,j)}$ is faithful.
\end{proposition}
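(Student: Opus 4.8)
The plan is to show that, restricted to $\RBiLoc_{RB(i,j)}$, the functor $\Rmt_{RB(i,j)}$ coincides on the nose with the faithful total-part functor $\hat{G}$, so that faithfulness comes for free. The engine of the argument is that the defining condition of $\RBiLoc_{RB(i,j)}$ forces $\Rmt_{(i,j)}\!L=L$ on every object.

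First I would establish this key identity. Let $(L,L_{1},L_{2})$ be an object of $\RBiLoc_{RB(i,j)}$. By definition the only $j$-dense element of $L_{i}$ is $1$; in particular, $1$ is the only $j$-dense element of $L_{i}$ that is complemented in $L$, which is exactly condition $(3)$ of Proposition \ref{remequalL}. Hence Proposition \ref{remequalL} yields $\Rmt_{(i,j)}\!L=L$.

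Next I would read off the action of $\Rmt_{RB(i,j)}$ on objects and morphisms using this identity and compare it with $\hat{G}$. On an object, $\Rmt_{RB(i,j)}(L,L_{1},L_{2})=\Rmt_{(i,j)}\!L=L$, which is precisely $\hat{G}(L,L_{1},L_{2})$, the total part. On a morphism $f$, since the sublocale $\Rmt_{(i,j)}\!L$ is all of $L$, the restriction $f_{|\!\Rmt_{(i,j)}\!L}$ is nothing but $f_{|L}=f$, the total part of $f$, which is $\hat{G}(f)$. Thus $\Rmt_{RB(i,j)}$ and $\hat{G}$ agree on all objects and all morphisms of $\RBiLoc_{RB(i,j)}$, i.e.\ $\Rmt_{RB(i,j)}=\hat{G}$.

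Finally, I would invoke the already-noted fact that $\hat{G}=G\circ E_{\RBiLoc_{RB(i,j)}}$ is a composite of faithful functors and hence faithful; since $\Rmt_{RB(i,j)}=\hat{G}$, the functor $\Rmt_{RB(i,j)}$ is faithful. The only step demanding genuine care is the first one: correctly drawing $\Rmt_{(i,j)}\!L=L$ from the defining property of $\RBiLoc_{RB(i,j)}$ through Proposition \ref{remequalL}, together with the observation that restricting a localic map to the improper sublocale returns the map itself. After that the identification with $\hat{G}$ and the conclusion are entirely routine.
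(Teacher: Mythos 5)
Your proof is correct and takes essentially the same route as the paper: both hinge on Proposition \ref{remequalL} forcing $\Rmt_{(i,j)}\!L=L$ on every object of $\RBiLoc_{RB(i,j)}$, so that $\Rmt_{RB(i,j)}$ acts as the faithful total-part functor. Your packaging of this as the functor equality $\Rmt_{RB(i,j)}=\hat{G}$ is merely an abstract restatement of the paper's direct morphism computation (and in fact anticipates the paper's subsequent proposition that $\Rmt_{RB(i,j)}$ and $\hat{G}$ are naturally isomorphic).
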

\begin{proof}
	Let $f,g:(L,L_{1},L_{2})\rightarrow(M,M_{1},M_{2})$ be morphisms of $\RBiLoc_{RB(i,j)}$ such that $\Rmt_{RB(i,j)}(f)=\Rmt_{RB(i,j)}(g)$. Since, by Proposition \ref{remequalL}, $\Rmt_{(i,j)}\!L=L$ and $\Rmt_{(i,j)}\!M=M$, we have that $$\text{total part of }f=f_{|\!\Rmt_{(i,j)}\!L}=\Rmt_{(i,j)}(f)=\Rmt_{(i,j)}(g)=g_{|\!\Rmt_{(i,j)}\!M}=\text{total part of }g.$$ Therefore $f=g$, making $\Rmt_{RB(i,j)}$ faithful.
\end{proof}

In fact, the functors $\Rmt_{RB(i,j)}$ and $\hat{G}$ are naturally isomorphic, as we show below.

\begin{proposition}
	The functors $\hat{G}$ and $\Rmt_{RB(i,j)}$ are naturally isomorphic.
\end{proposition}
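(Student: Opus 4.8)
The plan is to produce the required natural isomorphism by restricting the natural transformation $\eta:\Rmt_{(i,j)}\to G$ built in Proposition \ref{natural} to the subcategory $\RBiLoc_{RB(i,j)}$, and then checking that on this smaller domain every component becomes invertible. Recall that the component of $\eta$ at a bilocale $(L,L_{1},L_{2})$ is the inclusion localic map $j_{\Rmt_{(i,j)}\!L}:\Rmt_{(i,j)}\!L\to L$. I would define $\hat{\eta}:\Rmt_{RB(i,j)}\to\hat{G}$ by $\hat{\eta}_{(L,L_{1},L_{2})}=\eta_{(L,L_{1},L_{2})}$ for each object $(L,L_{1},L_{2})$ of $\RBiLoc_{RB(i,j)}$. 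Naturality of $\hat{\eta}$ is inherited verbatim from that of $\eta$, since for any morphism $f$ of $\RBiLoc_{RB(i,j)}$ the square to be checked is precisely the commuting square displayed in the proof of Proposition \ref{natural}.

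The crucial step is to show that each $\hat{\eta}_{(L,L_{1},L_{2})}$ is an isomorphism. Let $(L,L_{1},L_{2})$ be an object of $\RBiLoc_{RB(i,j)}$; by definition $1$ is the only $j$-dense element of $L_{i}$. In particular $1$ is the only $j$-dense element of $L_{i}$ that is complemented in $L$, so Proposition \ref{remequalL} gives $\Rmt_{(i,j)}\!L=L$. Therefore the inclusion $j_{\Rmt_{(i,j)}\!L}:\Rmt_{(i,j)}\!L\to L$ is the identity map $\id_{L}$, which is an isomorphism in $\Loc$. Since every component of $\hat{\eta}$ is then an isomorphism, and a natural transformation all of whose components are isomorphisms is automatically a natural isomorphism, this yields $\Rmt_{RB(i,j)}\cong\hat{G}$.

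I expect essentially no serious obstacle here. The one point requiring a word of care is the passage from the defining condition of $\RBiLoc_{RB(i,j)}$, which concerns \emph{all} $j$-dense elements of $L_{i}$, to the hypothesis of Proposition \ref{remequalL}, which concerns \emph{complemented} $j$-dense elements of $L_{i}$: since the former condition is the stronger one, it forces the latter, and hence $\Rmt_{(i,j)}\!L=L$. Once this identification is in place the remainder is purely formal; indeed it shows that $\Rmt_{RB(i,j)}$ and $\hat{G}$ agree on objects and on morphisms (the restriction $f_{|\!\Rmt_{(i,j)}\!L}$ becoming the total part of $f$), so the natural isomorphism is carried by the identity localic maps.
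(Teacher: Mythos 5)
Your proposal is correct and follows essentially the same route as the paper: restrict the natural transformation $\eta$ of Proposition \ref{natural} to $\RBiLoc_{RB(i,j)}$, invoke Proposition \ref{remequalL} to get $\Rmt_{(i,j)}\!L=L$ for every object (the defining condition on $\RBiLoc_{RB(i,j)}$ being stronger than the complemented-$j$-dense hypothesis, exactly as you note), and conclude that each component inclusion is an isomorphism. Your write-up is in fact slightly more careful than the paper's, which asserts $\Rmt_{(i,j)}\!L=L$ without repeating this justification.
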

\begin{proof}
	Consider the natural transformation $\omega:\Rmt_{RB(i,j)}\rightarrow\hat{G}$ which maps as the natural transformation $\eta:\Rmt_{(i,j)}\rightarrow G$ given in Proposition \ref{natural}. Since $L=\Rmt_{(i,j)}L=\Rmt_{RB(i,j)}L$ for every $(L,L_{1},L_{2})\in \RBiLoc_{RB(i,j)}$, each component $\omega_{(L,L_{1},L_{2})}=j_{\Rmt_{(i,j)}\!L}:\Rmt_{(i,j)}\!L\rightarrow L$ is an isomorphism. Thus $\omega$ is a natural isomorphism.
\end{proof}

%Denote by $\BCFLoc$ the full subcategory of $\BiLoc$ whose objects are bilocales with total parts that are coframes. %It is clear that $\BCFLoc$ is a full subcategory of $\RBiLoc$.
We consider an endofunctor associated with $\Rmt_{(i,j)}$ that induces a comonad. We limit our focus to symmetric bilocales. Denote by $\RBiLoc_{SR(i,j)}$ the full subcategory of $\RBiLoc_{R(i,j)}$ whose objects are symmetric bilocales $(L,L_{1},L_{2})$ in which $\Rmt_{(i,j)}\!L$ is remote. 
Using the fact that $\nu_{S}[L]=S$ for every sublocale $S$ of a locale $L$, we have the following result.
\begin{proposition}
	The assignment \[\Rmt_{SB(i,j)}:\RBiLoc_{SR(i,j)}\rightarrow\RBiLoc_{SR(i,j)},\]
	\[(L,L_{1},L_{2})\mapsto(\Rmt_{(i,j)}\!L,\nu_{(\Rmt_{(i,j)}\!L)}[L_{1}],\nu_{(\Rmt_{(i,j)}\!L)}[L_{2}]),\]\[\Rmt_{SB}(f)=\Rmt_{(i,j)}(f)\]
	is an endofunctor.
\end{proposition}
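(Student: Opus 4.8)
The plan is to verify the three defining properties of a functor in turn: that the object assignment lands inside $\RBiLoc_{SR(i,j)}$, that the morphism assignment produces legitimate $\Rem_{(i,j)}$-maps between the assigned objects, and that identities and composites are preserved, the last of these being inherited almost verbatim from Theorem \ref{functorRem}. Write $R=\Rmt_{(i,j)}\!L$ for a symmetric object $(L,L,L)$ of $\RBiLoc_{SR(i,j)}$. The first step is to unwind the object assignment using the stated fact that $\nu_{S}[L]=S$ for every sublocale $S$: since here $L_{1}=L_{2}=L$ and $\nu_{R}\colon L\to R$ is onto, both parts $\nu_{R}[L_{1}]$ and $\nu_{R}[L_{2}]$ collapse to $R$, so the image of $(L,L,L)$ is the symmetric triple $(R,R,R)$. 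Any symmetric triple is automatically a bilocale (take $a_{1}=a_{2}=a$ in the generating condition), so no separate verification of the bilocale axiom is required.

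Next I would confirm that $(R,R,R)$ genuinely lies in $\RBiLoc_{SR(i,j)}$, i.e.\ that it is symmetric (clear) and that $\Rmt_{(i,j)}\!R$ is remote in $R$. The key input is that $R$, being a remote sublocale of $L$, is a Boolean locale, as recorded in the observation following Corollary \ref{restrictioncor}. By the characterization from \cite{N} that Boolean locales are exactly those all of whose sublocales are remote, every sublocale of $R$ is remote; in particular $\Rmt_{(i,j)}\!R$ is remote in $R$. (Equivalently, Example \ref{booleanbi}(4) gives $\Rmt_{(i,j)}\!R=R$, which is then remote in $R$.) Hence the object assignment maps $\RBiLoc_{SR(i,j)}$ into itself.

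For morphisms, given a $\Rem_{(i,j)}$-map $f\colon(L,L,L)\to(M,M,M)$, set $R=\Rmt_{(i,j)}\!L$ and $R'=\Rmt_{(i,j)}\!M$. By Theorem \ref{functorRem} the restriction $\Rmt_{SB(i,j)}(f)=\Rmt_{(i,j)}(f)=f_{|R}\colon R\to R'$ is a localic map, so it remains only to check it is a $\Rem_{(i,j)}$-map between the symmetric bilocales $(R,R,R)$ and $(R',R',R')$. The biframe conditions are automatic because all parts are the full locales: $f_{|R}[R]\subseteq R'$ and $(f_{|R})^{*}[R']\subseteq R$ hold trivially, so $f_{|R}$ is bilocalic. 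Moreover $R$ and $R'$ are Boolean, whence $\Rmt_{(i,j)}\!R=R$ and $\Rmt_{(i,j)}\!R'=R'$, so the required condition $f_{|R}[\Rmt_{(i,j)}\!R]\subseteq\Rmt_{(i,j)}\!R'$ reduces to $f_{|R}[R]\subseteq R'$, which again holds; thus $\Rmt_{SB(i,j)}(f)$ is a morphism of $\RBiLoc_{SR(i,j)}$.

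Finally, preservation of identities and composition, $\Rmt_{SB(i,j)}(\id)=\id$ and $\Rmt_{SB(i,j)}(g\circ f)=\Rmt_{SB(i,j)}(g)\circ\Rmt_{SB(i,j)}(f)$, is immediate, since on morphisms $\Rmt_{SB(i,j)}$ coincides with $\Rmt_{(i,j)}$, which is already a functor by Theorem \ref{functorRem}. I expect the only genuine obstacle to be the object-level step: correctly identifying the image as the symmetric triple $(R,R,R)$ via $\nu_{R}[L]=R$ and then certifying that it lies in $\RBiLoc_{SR(i,j)}$ through the Booleanness of the remote sublocale $R$. Once that is secured, every remaining check is either formal or is delegated to Theorem \ref{functorRem}.
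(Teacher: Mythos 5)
Your proof is correct and follows exactly the route the paper leaves implicit: the paper states this proposition without proof, offering only the remark that $\nu_{S}[L]=S$ for every sublocale $S$, and your verification --- collapsing the image object to the symmetric triple $(\Rmt_{(i,j)}\!L,\Rmt_{(i,j)}\!L,\Rmt_{(i,j)}\!L)$, using Booleanness of remote sublocales together with Example \ref{booleanbi}(4) to certify membership in $\RBiLoc_{SR(i,j)}$, and delegating the morphism-level and functoriality checks to Theorem \ref{functorRem} --- is precisely the routine argument being omitted. No gaps.
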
 
The endofunctor $\Rmt_{SB(i,j)}$ is associated with some comonad, as we show in Theorem \ref{comonad} below. We show that there are natural transformations $\eta:\Rmt_{SB(i,j)}\rightarrow \id_{\RBiLoc_{SR(i,j)}}$ and $\mu:\Rmt_{SB(i,j)}\rightarrow\Rmt_{SB(i,j)}\circ \Rmt_{SB(i,j)}$ such that the following diagrams commute:
\begin{equation}
	\begin{gathered}
		\xymatrixcolsep{6pc}\xymatrixrowsep{4pc}
		\xymatrix{
			{\Rmt_{SB(i,j)} } \ar[r]^{\mu}\ar[dr]_{\id}\ar[d]_{\mu}& {\Rmt_{SB(i,j)}\circ\Rmt_{SB(i,j)} } \ar[d]^{\eta\Rmt_{SB(i,j)}} \\
			\Rmt_{SB(i,j)}\circ\Rmt_{SB(i,j)}\ar[r]_{\Rmt_{SB(i,j)}\eta}&  {\Rmt_{SB(i,j)}}}
	\end{gathered}
\end{equation}
\begin{equation}
	\begin{gathered}
		\xymatrixcolsep{6pc}\xymatrixrowsep{4pc}
		\xymatrixcolsep{6pc}\xymatrixrowsep{4pc}\xymatrix{
			{\Rmt_{SB(i,j)} } \ar[r]^{\mu}\ar[d]_{\mu}& {\Rmt_{SB(i,j)}\circ\Rmt_{SB(i,j)} } \ar[d]^{\Rmt_{SB(i,j)}\mu} \\
			\Rmt_{SB(i,j)}\circ\Rmt_{SB(i,j)}\ar[r]_{\mu\Rmt_{SB(i,j)}}&  {\Rmt_{SB(i,j)}\circ\Rmt_{SB(i,j)}\circ\Rmt_{SB(i,j)}}}
	\end{gathered}
\end{equation}
\begin{theorem}\label{comonad}
	The triple $(\Rmt_{SB(i,j)},\eta,\mu)$, where 
	\begin{enumerate}
		\item $\Rmt_{SB(i,j)}$ is the endofunctor $\Rmt_{SB(i,j)}:\RBiLoc_{SR(i,j)}\rightarrow\RBiLoc_{SR(i,j)}$,
		\item $\eta:\Rmt_{SB(i,j)}\rightarrow \id_{\BCFLoc_{SR(i,j)}}$ is a function that assigns to each $(L,L_{1},L_{2})\in \RBiLoc_{SR(i,j)}$ the map $$\eta_{(L,L_{1},L_{2})}=j_{\Rmt_{(i,j)}\!L}:(\Rmt_{(i,j)}\!L,\nu_{(\Rmt_{(i,j)}\!L)}[L_{1}],\nu_{(\Rmt_{(i,j)}\!L)}[L_{2}])\rightarrow (L,L_{1},L_{2}),$$ and
		\item $\mu:\Rmt_{SB}\rightarrow\Rmt_{SB}\circ \Rmt_{SB}$ assigns to each $(L,L_{1},L_{2})\in \RBiLoc_{SR(i,j)}$ the map $$\mu_{(L,L_{1},L_{2})}=\id_{\Rmt_{SB}(L)}=\id_{\Rmt_{(i,j)}\!L}:\Rmt_{SB}(L,L_{1},L_{2})\rightarrow \Rmt_{SB}(\Rmt_{SB} (L,L_{1},L_{2})),$$
	\end{enumerate}  is a comonad. 
\end{theorem}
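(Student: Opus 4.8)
The plan is to reduce the entire statement to a single structural fact: the endofunctor $\Rmt_{SB(i,j)}$ is \emph{idempotent}, that is, $\Rmt_{SB(i,j)}\circ\Rmt_{SB(i,j)}=\Rmt_{SB(i,j)}$. Granting idempotence, the counit $\eta$ and the comultiplication $\mu=\id$ are forced to satisfy all three comonad diagrams almost formally, so the substantive work is concentrated in establishing idempotence.

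First I would prove idempotence on objects. Fix $(L,L_{1},L_{2})\in\RBiLoc_{SR(i,j)}$. Since $L$ is symmetric, $L_{1}=L_{2}=L$, so applying the endofunctor gives
\[
\Rmt_{SB(i,j)}(L,L_{1},L_{2})=\bigl(\Rmt_{(i,j)}\!L,\ \nu_{(\Rmt_{(i,j)}\!L)}[L],\ \nu_{(\Rmt_{(i,j)}\!L)}[L]\bigr).
\]
Using $\nu_{S}[L]=S$ for every sublocale $S$, both parts equal $\Rmt_{(i,j)}\!L$, so the image is again a symmetric bilocale with total part $\Rmt_{(i,j)}\!L$. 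Because $(L,L_{1},L_{2})$ lies in $\RBiLoc_{SR(i,j)}$, the sublocale $\Rmt_{(i,j)}\!L$ is remote, hence Boolean, since remote sublocales are Boolean algebras. A symmetric bilocale whose total part is Boolean satisfies $\Rmt_{(i,j)}\!M=M$ by Example \ref{booleanbi}(4); applying this to $M=\Rmt_{(i,j)}\!L$ yields $\Rmt_{(i,j)}(\Rmt_{(i,j)}\!L)=\Rmt_{(i,j)}\!L$, so a second application of $\Rmt_{SB(i,j)}$ reproduces the same object. On morphisms $\Rmt_{SB(i,j)}(f)=f_{|\Rmt_{(i,j)}\!L}$, and restricting once more to $\Rmt_{(i,j)}(\Rmt_{(i,j)}\!L)=\Rmt_{(i,j)}\!L$ changes nothing; hence $\Rmt_{SB(i,j)}\circ\Rmt_{SB(i,j)}=\Rmt_{SB(i,j)}$.

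Next I would check that $\eta$ and $\mu$ are natural transformations of the declared types. Each $\eta_{(L,L_{1},L_{2})}=j_{\Rmt_{(i,j)}\!L}$ is the sublocale inclusion, which is a localic map, and its naturality square is exactly the one verified in Proposition \ref{natural}, read off in the subcategory $\RBiLoc_{SR(i,j)}$. For $\mu$, idempotence guarantees $\Rmt_{SB(i,j)}(\Rmt_{SB(i,j)}(L))=\Rmt_{SB(i,j)}(L)$, so $\mu_{(L,L_{1},L_{2})}=\id_{\Rmt_{(i,j)}\!L}$ is a well-typed morphism $\Rmt_{SB(i,j)}(L)\to\Rmt_{SB(i,j)}(\Rmt_{SB(i,j)}(L))$, and naturality of a family of identities is automatic.

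Finally I would verify the three comonad diagrams, all of which collapse once $\mu=\id$. Coassociativity $(\Rmt_{SB(i,j)}\mu)\circ\mu=(\mu\,\Rmt_{SB(i,j)})\circ\mu$ holds because both whiskered copies of $\mu$ are again identities, so both composites equal $\id$. For the counit law $(\eta\,\Rmt_{SB(i,j)})\circ\mu=\id$, the relevant component is $\eta_{\Rmt_{SB(i,j)}(L)}=j_{\Rmt_{(i,j)}(\Rmt_{(i,j)}\!L)}$; since $\Rmt_{(i,j)}(\Rmt_{(i,j)}\!L)=\Rmt_{(i,j)}\!L$ this inclusion is the identity, and composing with $\mu=\id$ gives $\id$. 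For the other counit law $(\Rmt_{SB(i,j)}\eta)\circ\mu=\id$, the component is $\Rmt_{SB(i,j)}(\eta_{(L,L_{1},L_{2})})=\Rmt_{(i,j)}(j_{\Rmt_{(i,j)}\!L})=(j_{\Rmt_{(i,j)}\!L})_{|\Rmt_{(i,j)}\!L}$, which again reduces to the identity by idempotence. The main obstacle is the idempotence step, and within it the one genuinely substantive point is arguing that the image bilocale is both symmetric and Boolean so that Example \ref{booleanbi}(4) applies; everything downstream of idempotence is then purely formal.
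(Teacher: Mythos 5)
Your proof is correct and is, in substance, exactly the argument the paper leaves implicit behind its one-word proof (``Straightforward''): the identity comultiplication $\mu$ is only well-typed because $\Rmt_{SB(i,j)}$ is idempotent, which you derive the intended way (remoteness of $\Rmt_{(i,j)}\!L$ gives $\Rmt_{(i,j)}\!L\subseteq\mathfrak{B}L$, hence Booleanness, hence Example \ref{booleanbi}(4) applies to the symmetric bilocale $(\Rmt_{(i,j)}\!L,\Rmt_{(i,j)}\!L,\Rmt_{(i,j)}\!L)$), after which all three diagrams collapse. The only detail you gloss over is that each component $\eta_{(L,L_{1},L_{2})}=j_{\Rmt_{(i,j)}\!L}$ must be a morphism of $\RBiLoc_{SR(i,j)}$ (a bilocalic $\Rem_{(i,j)}$-map), not merely a localic map as in Proposition \ref{natural}; this is routine, since $j^{*}=\nu_{\Rmt_{(i,j)}\!L}$ carries $L_{i}=L$ onto $\Rmt_{(i,j)}\!L$ and the $\Rem_{(i,j)}$-condition is immediate from the idempotence you established.
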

\begin{proof} Straightforward.
\end{proof}

Let $\BBooLoc_{S(i,j)}$ denote the full subcategory of $\BiLoc$ whose objects are symmetric Boolean bilocales. We close this section with a result showing that $\BBooLoc_{S(i,j)}$ is a coreflective subcategory of $\BCFLoc_{SR(i,j)}$.

Since $\Rmt_{(i,j)}L=L$ for every symmetric Boolean bilocale $(L,L_{1},L_{2})$, each bilocalic map between symmetric Boolean bilocales is a $\Rmt_{(i,j)}$-map and hence $\BBooLoc_{S(i,j)}$ is a full subcategory of $\RBiLoc_{SR(i,j)}$ because every Boolean bilocale is an object of $\RBiLoc$.

\begin{proposition}\label{reflective}
	$\BBooLoc_{S(i,j)}$ is a coreflective subcategory of $\RBiLoc_{SR(i,j)}$.
\end{proposition}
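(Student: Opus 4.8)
The plan is to show that the inclusion functor $\BBooLoc_{S(i,j)}\hookrightarrow\RBiLoc_{SR(i,j)}$ admits a right adjoint, by producing for each object $(L,L_{1},L_{2})$ of $\RBiLoc_{SR(i,j)}$ a couniversal arrow. The natural candidate for the coreflection is the object $\Rmt_{SB(i,j)}(L,L_{1},L_{2})=(\Rmt_{(i,j)}\!L,\nu_{(\Rmt_{(i,j)}\!L)}[L_{1}],\nu_{(\Rmt_{(i,j)}\!L)}[L_{2}])$, equipped with the counit $\eta_{(L,L_{1},L_{2})}=j_{\Rmt_{(i,j)}\!L}$ of Theorem~\ref{comonad}. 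First I would check that this object really lies in $\BBooLoc_{S(i,j)}$: since $(L,L_{1},L_{2})$ is symmetric we have $L_{1}=L_{2}=L$, so each part equals $\nu_{(\Rmt_{(i,j)}\!L)}[L]=\Rmt_{(i,j)}\!L$ by the identity $\nu_{S}[L]=S$, making the triple symmetric; and because $(L,L_{1},L_{2})$ is an object of $\RBiLoc_{SR(i,j)}$, its total part $\Rmt_{(i,j)}\!L$ is remote, hence Boolean (remote sublocales being Boolean), and a symmetric bilocale with Boolean total part is precisely a symmetric Boolean bilocale.

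The heart of the argument is the universal property. Given a symmetric Boolean bilocale $(B,B_{1},B_{2})$ and a $\Rmt_{(i,j)}$-map $g\colon(B,B_{1},B_{2})\to(L,L_{1},L_{2})$, I would use Example~\ref{booleanbi}(2) to note that $\Rmt_{(i,j)}\!B=B$, so that the defining condition $g[\Rmt_{(i,j)}\!B]\subseteq\Rmt_{(i,j)}\!L$ of a $\Rmt_{(i,j)}$-map becomes $g[B]\subseteq\Rmt_{(i,j)}\!L$. Thus the image of the total part of $g$ is contained in the sublocale $\Rmt_{(i,j)}\!L$, and the standard image factorization in $\Loc$ yields a localic map $\tilde g\colon B\to\Rmt_{(i,j)}\!L$ with $j_{\Rmt_{(i,j)}\!L}\circ\tilde g=g$. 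Uniqueness of $\tilde g$ is immediate, since the sublocale embedding $j_{\Rmt_{(i,j)}\!L}$ is a monomorphism.

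It then remains to verify that $\tilde g$ is a morphism of $\BBooLoc_{S(i,j)}$, that is, a bilocalic $\Rmt_{(i,j)}$-map. By the symmetry of both $(B,B_{1},B_{2})$ and $\Rmt_{SB(i,j)}(L,L_{1},L_{2})$, the biframe conditions collapse to $\tilde g[B]\subseteq\Rmt_{(i,j)}\!L$ and $\tilde g^{*}[\Rmt_{(i,j)}\!L]\subseteq B$, both of which hold; and since the total part $\Rmt_{(i,j)}\!L$ of the codomain is Boolean, $\Rmt_{(i,j)}$ of that bilocale is $\Rmt_{(i,j)}\!L$ itself, so $\tilde g[\Rmt_{(i,j)}\!B]=\tilde g[B]\subseteq\Rmt_{(i,j)}\!L$ confirms that $\tilde g$ is a $\Rmt_{(i,j)}$-map. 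This exhibits $\eta_{(L,L_{1},L_{2})}$ as a couniversal arrow and hence realizes $\BBooLoc_{S(i,j)}$ as a coreflective subcategory.

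I expect the main obstacle to be the factorization step: one must justify that mere containment of the image $g[B]$ in $\Rmt_{(i,j)}\!L$ yields a genuine \emph{localic} factorization through the embedding, and that the resulting $\tilde g$ satisfies the biframe and $\Rmt_{(i,j)}$-map axioms rather than being only a set map. The symmetric hypothesis is exactly what makes this last verification reduce to the single image inclusion, while the remoteness of $\Rmt_{(i,j)}\!L$ built into the objects of $\RBiLoc_{SR(i,j)}$ is precisely what guarantees the coreflection lands in the Boolean subcategory.
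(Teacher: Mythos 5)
Your proposal is correct and takes essentially the same route as the paper's proof: the coreflection is the symmetric Boolean bilocale with total part $\Rmt_{(i,j)}\!L$ (by symmetry your $\nu_{(\Rmt_{(i,j)}\!L)}[L_{i}]$ equals $\Rmt_{(i,j)}\!L$, which is the paper's triple $(\Rmt_{(i,j)}\!L,\Rmt_{(i,j)}\!L,\Rmt_{(i,j)}\!L)$), the counit is the embedding $j_{\Rmt_{(i,j)}\!L}$, and the couniversal property is obtained exactly as in the paper by corestricting a $\Rem_{(i,j)}$-map $g$ using the fact that $\Rmt_{(i,j)}\!B=B$ for symmetric Boolean bilocales, with uniqueness following since the embedding is monic. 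The only difference is presentational: you spell out the localic factorization and the bilocalic/$\Rem_{(i,j)}$-map conditions for the factored map, which the paper asserts as clear.
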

\begin{proof}
	Let $(L,L_{1},L_{2})\in \RBiLoc_{SR(i,j)}$. Since $\Rmt_{(i,j)}\!L$ is Boolean, it is clear that the triple $(\Rmt_{(i,j)}\!L,\Rmt_{(i,j)}\!L,\Rmt_{(i,j)}\!L)$ is a Boolean bilocale, making it an object of the category $\BBooLoc_{S(i,j)}$.
	
	The map $j_{\Rmt_{(i,j)}\!L}:(\Rmt_{(i,j)}\!L,\Rmt_{(i,j)}\!L,\Rmt_{(i,j)}\!L)\rightarrow (L,L_{1},L_{2})$ is a $\Rem_{(i,j)}$-map: $$(j_{\Rmt_{(i,j)}\!L})_{|\!\Rmt_{(i,j)}\!L}[\Rmt_{(i,j)}(\Rmt_{(i,j)}\!L)]=j_{\Rmt_{(i,j)}\!L}[\Rmt_{(i,j)}(\Rmt_{(i,j)}\!L)]\subseteq\Rmt_{(i,j)}\!L$$ so that $j_{\Rmt_{(i,j)}\!L}$ is a $\Rem_{(i,j)}$-map.
	
	Let $f:(N,N_{1},N_{2})\rightarrow (L,L_{1},L_{2})$ be a $\Rem_{(i,j)}$-map where $(N,N_{1},N_{2})$ is a symmetric Boolean bilocale. Then $N=\Rmt_{(i,j)}(N)$ by Example \ref{booleanbi}(4). So, there is a bilocalic map, say $f':(N,N_{1},N_{2})\rightarrow(\Rmt_{(i,j)}L,\Rmt_{(i,j)}L,\Rmt_{(i,j)}L)$, which maps as $f$. This bilocalic map clearly satisfies $c\circ f'=f$.
	
	$f'$ is unique: Let $k:(N,N_{1},N_{2})\rightarrow(\Rmt_{(i,j)}L,\Rmt_{(i,j)}L,\Rmt_{(i,j)}L)$ be a bilocalic map such that $j_{\Rmt_{(i,j)}\!L}\circ k=f$. Then, for each $x\in N$, $k(x)=j_{\Rmt_{(i,j)}\!L}(k(x))=f(x)=c(f'(x))=f'(x)$. Thus $k=f'$ so that $f'$ is unique.
\end{proof}

%\begin{obs}
	%One should not confuse Example \ref{booleanbi}(5) with Proposition \ref{remequalL}. The conditions in Proposition \ref{remequalL} do not always imply that the total part $L$ is Boolean. For instance, consider the set $X=\{a,b,c\}$ endowed with topologies $\tau_{1}=\{\emptyset,X,\{a\}\}$ and $\tau_{2}=\{\emptyset,X,\{b\}\}$. We have that $\tau=\{\emptyset,X,\{a\},\{b\},\{a,b\}\}$ and the only $\tau_{2}$-dense member of $\tau_{1}$ is $X$ which is complemented in $\tau$. It follows from Proposition \ref{remequalL} that $\Rmt_{(1,2)}\!\tau=\tau$, but $\tau$ is not Boolean since, for instance, the element $\{a\}$ is not complemented.
	
	%(2) Proposition \ref{remequalL} together with Example \ref{booleanbi}(4) tell us that $\Rmt_{(i,j)}\!L$ may exist as a sublocale 
%\end{obs}
	\section*{Acknowledgement}
	The author expresses his gratitude to his Ph.D. supervisor Professor Themba Dube who guided and supported him throughout the preparation stages of this article. He acknowledges funding from the National Research Foundation of South Africa under Grant 134159 and passes words of thanks to the referee for the valuable comments.
	
	%############
	%############

\end{document}